\documentclass[12pt]{amsart}
\usepackage{amsfonts}
\usepackage{enumerate}
\usepackage{helvet,courier}
\usepackage{mathptmx,amsmath}
\usepackage{type1cm}
\DeclareMathAlphabet{\mathcal}{OMS}{cmsy}{m}{n}
\DeclareMathAlphabet{\mathbbold}{U}{bbold}{m}{n}  
\usepackage{amsthm}      
\usepackage{graphicx}       
\usepackage{multicol}
\usepackage{mathrsfs}
\usepackage{amssymb}
\usepackage{verbatim}

\usepackage[usenames,dvipsnames]{xcolor}

\usepackage[title]{appendix}

\theoremstyle{plain}
\newtheorem{thm}{Theorem}
\newtheorem{prop}{Proposition}[section]
\newtheorem{lm}[prop]{Lemma}

\newtheorem{conj}{Conjecture}

\theoremstyle{remark}
\newtheorem{rmk}{Remark}

\theoremstyle{definition}

\newcommand{\la}{\lambda}

\newcommand{\bbr}{\mathbb{R}}
\newcommand{\rn}{\mathbb{R}^n}
\newcommand{\bbz}{\mathbb{Z}}

\newcommand{\bbn}{\mathbb{N}}

\newcommand{\sset}{\subset}

\newcommand{\ep}{\epsilon}

\newcommand{\de}{\delta}

\newcommand{\f}{\frac}
\newcommand{\nf}{\infty}

\newcommand{\tf}{\tfrac}
\newcommand{\wh}{\widehat}

\newcommand{\zozo}{[0,\,1]^{2}}
\newcommand{\com}{,\,}

\newcommand{\spt}{\text{supp }}

\begin{document}

\author{Danqing He}
\address{Danqing He, School of Mathematical Sciences,
Fudan University, People's Republic of China}
\email{hedanqing@fudan.edu.cn}

\author{Xinyu Zhu}
\address{Xinyu Zhu, School of Mathematical Sciences,
Fudan University, People's Republic of China}
\email{23110180063@m.fudan.edu.cn}

\thanks{D. He and X. Zhu are supported by  National Key R$\&$D Program of China (No. 2021YFA1002500), NNSF of China (No. 12322105) and Natural Science Foundation of Shanghai (No. 23QA1400300). In addition, D. He is supported by the New Cornerstone Science Foundation. }

\title{A Roth theorem in $\bbr^2$ and a related  ergodic theorem}
\date{}
\maketitle

\begin{abstract}
    We prove a quantitative Roth theorem in the plane for the two-dimensional polynomial pattern $(x_1,x_2), (x_1,x_2)+(t_1,t_2), (x_1,x_2)+(t_1^2+t_2^2,t_1^3+t_2^3)$. A pointwise convergence result for the associated polynomial ergodic average is also obtained. A new bilinear Sobolev improving estimate serves as the primary analytic tool, derived from a new sublevel set estimate.
\end{abstract}

\section{Introduction}

The famous Erd\H{o}s-Tur\'an conjecture \cite{Erdos1936} states that a set $E$ of integers with positive upper Banach density must contain an arithmetic progression of $k$ terms for any $k\in\mathbb N$. Roth \cite{Roth1953} first settled the conjecture for $k=3$, and  Szemer\'edi \cite{Szemeredi1975} later provied a complete proof. Alternative proofs were subsequently given by Furstenberg \cite{Furstenberg1977} and Gowers \cite{Gowers2001} separately. Green and Tao \cite{Green2008} proved the case when $E$ is the set of primes, a breakthrough toward the stronger Erd\H{o}s-Tur\'an conjecture which posits the same for any set $E$ with $\sum_{k\in E}\f1k=\nf$.

The Szemer\'edi Theorem has since become a pivotal result in additive number theory, promoting a wide range of  improvements and generalizations; see, for instance, \cite{Bourgain1999a}, \cite{Sanders2011}, \cite{Bourgain2016a}, \cite{Dong2017}, \cite{Peluse2019}, \cite{Bloom2020}, \cite{Kelley2023},  \cite{Peluse2024}.

Furstenberg, Katznelson, and Weiss \cite{Furstenberg1990} studied an extension of  Szemer\'edi's Theorem to $\bbr^n$, which was also proved by Bourgain \cite{Bourgain1986c} using a Fourier analysis approach. In a related direction,
Bourgain \cite{Bourgain1988b} established a (quantitative) nonlinear Roth theorem, guaranteeing  the existence of the pattern $(x,x+t,x+t^2)$ in sets $E\subset [0,N]$ with positive density.  \cite{Bergelson2000} subsequently generalized this result to a much broader setting, using a Szemer\'edi type result obtained in \cite{Bergelson1996}.

{\noindent \bf Theorem A.} (\cite{Bergelson2000})
{\it  Let $\varepsilon > 0$, $k, l \in \mathbb{N}$, and let $p_{i}(x)=(p_{i,1},\dots, p_{i,l}) \in \big(\mathbb{R}[x]\big)^l$ with $p_{i,j}(0) = 0$, $1 \leq i \leq k$, $1 \leq j \leq l$. Let $d = \max_{1 \leq i \leq k, 1 \leq j \leq l} \deg p_{i,j}$. There exists $\delta > 0$ having the property that for any $N > 1$, if $S \subset [0, N]^l$ is a measurable set with $\lambda(S) \geq \varepsilon N^l$ then there exist $x\in \mathbb{R}^l$ and $t \in \mathbb{R}$ with $t \geq \delta N^{1/d}$ such that 
$
x, x+p_i(t)
 \in S  \qquad \forall 1\le i\le k.
$
}

Building on the $\sigma$-uniformity method introduced by \cite{Li2013}, \cite{Durcik2017} proved a quantitative version of  Theorem A for $p_1(t)=t$  in $\bbr$ when $k=2$.
In \cite{Christ2020c},
research then expanded to the case $p_1(t)=(t,0)$ and $p_2(t)=(0,t^2)$ in $\bbr^2$ by developing the method originated in \cite{Christ2020}.
Further  quantitative results appear in  \cite{Chen2020a}, \cite{Chen2023}, \cite{Chen2024}, \cite{KMPW},  and \cite{Kosz2024}.

A key limitation of these works is that they treat only one-dimensional polynomials. Since Theorem~A applies to polynomials of any dimension, a significant and natural goal is to develop quantitative extensions for higher-dimensional polynomials.

Although the quantitative version of Theorem A for $t\in\bbr$ and $k=2$ remains open, we can study a closely related
pattern $(t_1,t_2)$ and  $(t_1^2+t_2^2,t_1^3+t_2^3)$. 
As many  patterns with two-dimensional polynomials in two variables reduce to  one-dimensional cases (see the Appendix for some examples), the pattern  $(t_1,t_2)$ and  $(t_1^2+t_2^2,t_1^3+t_2^3)$ represents the simplest genuinely  (and hence nontrivial) two-dimensional case, making it a natural starting point.

Let $x=(x_1,x_2)\in\bbr^2$, $t=(t_1,t_2)\in\bbr^2$, $P_1(t)=t_1^2+t_2^2$, $P_2(t)=t_1^3+t_2^3$, 
and $P(t)=(P_1(t),P_2(t))$. Our first main result in this note is as follows.

\begin{thm}\label{t3}
	Let $\ep\in(0,\ \frac{1}{2})$, $N\ge 1$. Then there exists a constant $\de(\ep)\gtrsim e^{-e^{c\ep^{-3}}}>0$ such that the following holds, where $c$ is a constant independent of $\ep$ and $N$. If $E\subseteq [0,\, N^2]\times[0,N^3]$  is a measurable set whose Lebesgue measure is greater than $\ep N^5$, then there exist $$x,\, x+t, x+P(t)\in E$$ with $t_1,\,  t_2>\delta(\ep)N$.
\end{thm}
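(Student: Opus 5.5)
We follow the now-standard scheme of Bourgain, Durcik--Guo--Roos and Christ--Durcik--Roos. Fix a small scale parameter $\de\in(0,1)$, to be chosen depending only on $\ep$. After rescaling $x_1\mapsto x_1/N^2$, $x_2\mapsto x_2/N^3$, $t\mapsto t/N$, the pattern and the density hypothesis become scale invariant. We may therefore take $N=1$ and $E\subset[0,1]^2$ with $|E|>\ep$. For $f=\mathbf 1_E$ we consider the counting form
\[
\Lambda_\de(f_0,f_1,f_2)=\int_{\bbr^2}\int_{\bbr^2} f_0(x)\,f_1(x+t)\,f_2(x+P(t))\,\varphi_\de(t)\,dt\,dx ,
\]
where $\varphi_\de(t)=\de^{-2}\varphi(t/\de)$ and $\varphi\ge 0$ is a smooth bump supported in $[1,2]^2$. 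Then $t_1,t_2\ge\de$ on the support, and it suffices to show $\Lambda_\de(f,f,f)>0$ for some admissible $\de\gtrsim e^{-e^{c\ep^{-3}}}$.

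Next we split $f_2$, the function carrying the curved term, into smooth and rough parts. Write $f_2=f_2*\psi_{\de,\sigma}+(f_2-f_2*\psi_{\de,\sigma})$, where $\psi_{\de,\sigma}$ is an anisotropic mollifier at scales $(\sigma\de^2,\sigma\de^3)$, adapted to the dilation structure of $P$, with $\sigma$ small. The main analytic input is a bilinear Sobolev improving estimate: for $f_2$ with Fourier support in an annulus $|\xi_1|\de^2+|\xi_2|\de^3\sim 2^j$,
\[
|\Lambda_\de(f_0,f_1,f_2)|\lesssim 2^{-\gamma j}\,\|f_0\|_2\|f_1\|_2\|f_2\|_2 ,\qquad \gamma>0 .
\]
We would prove this by a $TT^*$ argument in $f_1$, reducing to oscillatory integrals with phase $\xi\cdot(P(t)-P(t'))$ along the fibres $t-t'=s$ fixed. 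The resulting decay rests on a sublevel set estimate for $|\det\nabla_t(P(t)-P(t-s))|$, and more precisely for the $2\times 2$ minors of the relevant Jacobians on $[1,2]^2$. This sublevel estimate is the main obstacle. The Jacobian $\det DP=6t_1t_2(t_2-t_1)$ degenerates on the diagonal $t_1=t_2$, so one must show the degenerate set is small with a polynomial rate in the thickness parameter, uniformly in $s$. I would attack it by explicitly factoring the relevant polynomials in $(t,s)$, then using a van der Corput/Carbery--Christ--Wright type sublevel bound on each factor, and treating the diagonal region separately with a localized cutoff. Summing over $j$ with $2^j\gtrsim\sigma^{-1}$ bounds the rough part by $C\sigma^{\gamma}$.

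For the smooth part, $f_2*\psi_{\de,\sigma}(x+P(t))$ differs from $f_2*\psi_{\de,\sigma}(x)$ only slightly when $P(t)$ is small compared with the mollification scale. This is not automatic, since $P(t)\sim(\de^2,\de^3)$. We therefore run a second, coarser scale $\de'\ll\sigma\de$ and average over a lacunary sequence of scales $\de_k$. Using a Cauchy--Schwarz/energy pigeonholing argument in the style of Bourgain, there is a scale among $\sim\ep^{-3}$ consecutive lacunary ones where $\|f*\psi_{\de_k,\sigma}-f*\psi_{\de_{k+1},\sigma}\|_2$ is small. At that scale the main term reduces to the linear Roth-type quantity $\int f(x)f(x+t)\,(f*\psi)(x)\,\varphi_\de(t)$. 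This quantity is $\gtrsim\ep^3$ by Hölder together with the lower bound for the averaged two-point correlation, handled by the linear (Fourier) case for the $t$ shift. Choosing $\sigma$ polynomially small in $\ep$ makes the error $C\sigma^\gamma$ much smaller than $\ep^3$. The pigeonholing over $\ep^{-3}$ lacunary scales, with each scale ratio itself exponential in the previous one, produces $\de(\ep)\gtrsim e^{-e^{c\ep^{-3}}}$. This gives $\Lambda_\de(f,f,f)>0$ and hence the desired configuration with $t_1,t_2>\de(\ep)N$.
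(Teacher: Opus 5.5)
Your reduction to $N=1$ is false, and it removes exactly the difficulty the theorem is about. Under $x\mapsto(x_1/N^2,x_2/N^3)$, $t\mapsto t/N$, the point $x+P(t)$ becomes $y+P(s)$. But $x+t$ becomes $y+(s_1/N,\,s_2/N^2)=y+B_Ns$. The linear and polynomial parts of the pattern have different homogeneities, so no single anisotropic dilation preserves both. The rescaled problem is therefore to find $y,\ y+B_Ns,\ y+P(s)$ in a set of measure $>\epsilon$ in $[0,1]^2$ with $s_1,s_2>\delta(\epsilon)$, where $\delta(\epsilon)$ must not depend on $N$. The case $N=1$ says nothing about this. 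Localizing $E$ to smaller boxes does not help either, since $P(t)\sim(N^2,N^3)$ already spans the whole box.

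The paper is organized around exactly this point. The $N=1$ estimate (Theorem~\ref{t4}) is said to suffice only for $N=1$. The Roth argument is then run with Theorem~\ref{t27}, a bilinear Sobolev-improving bound for $\int f_1(x+B_Rt)f_2(x+P(t))r_l(t)\,dt$ whose constant $2^{Cl}$ is independent of $R$.

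Your sketch of the analytic input never sees $R$, so it cannot supply this.

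\begin{itemize}
\item The relevant non-degeneracy is that of $t\mapsto P(t)-B_Rt$. Its Jacobian is $6t_1t_2^2-6t_1^2t_2-3t_2^2/R-2t_1/R^2+1/R^3$, not $\det DP$ or a minor of $\nabla_t(P(t)-P(t-s))$.
\item All spatial localizations become $R^{-1}\lambda^{-\gamma}\times R^{-2}\lambda^{-\gamma}$ rectangles.
\item The paper remarks that the $TT^*$/$\sigma$-uniformity route you propose is not easily adapted, since $\widehat{f_1}$ cannot be assumed annular. It instead uses the anisotropic structural decomposition of Lemma~\ref{t18} together with the counting bound $\#I\lesssim R^3\lambda^{4\gamma-\tilde\delta}$ of Theorem~\ref{t100}, uniform in $R$.
\end{itemize}

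A sublevel estimate for the Jacobian of $P$ alone, however it is proved, does not give uniformity in $R$.

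Two smaller points. First, your bilinear estimate has no loss in the $t$-scale $\delta$. The bound actually available loses $2^{Ck'}$, and it is this loss that forces $k''\approx (C/\gamma')k'$ and produces the double exponential; your bookkeeping does not follow from the estimate you state. Second, the main-term lower bound is not a H\"older argument. It is Bourgain's lemma for boxes (Lemma~\ref{l1}), which must be applied to the very eccentric box of size $2^{-k'}N^{-1}\times 2^{-k'}N^{-2}$ produced by $B_N$.
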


The essential ingredient to establish Theorem~\ref{t3} is a bilinear Sobolev improving estimate. We present first  a basic version adequate  to prove Theorem~\ref{t3} when $N=1$.

Let $\omega$ be a nonnegative smooth bump function supported in $[1/2\com2]$ with  $\int_\bbr \omega(s)ds=1$. For $l\in\bbn$, we define 
$$
\omega_{l}(x):=2^{2l+4}\omega(2^{l}x_1)\omega(2^{l+4}x_2).
$$
Let 
$$
T_l(f_1, f_2)(x)=\int_{\bbr^2}f_1(x+t)f_2(x+P(t))\omega_{l}(t)dt.
$$

\begin{thm}\label{t4}
Let $l_0\in\bbn$ be a large enough integer.
	There exists $\gamma>0 $ such that,  for all $\la\ge 1$, $l>l_0$, and every Schwartz function $f_2$ with $\spt \wh f_2\sset B(0,\, 2\la)\setminus B(0,\, \la)$, we have
	\begin{equation}
		\|T_l(f_1,f_2)\|_{L^{1}(\zozo)}\le C_{l}\la^{-\gamma}\|f_1\|_{L^2(\bbr^2)}\|f_2\|_{L^2(\bbr^2)}\,,
	\end{equation}
    where $C_{l}\le2^{Cl}$ for some $C>0$.
\end{thm}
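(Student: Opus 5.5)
We follow the now standard route of Li, Christ and Durcik--Guo--Roos: we reduce the bilinear estimate to a decay estimate for a localized oscillatory form, and then derive that decay from a sublevel set estimate for the phase. Here $t$ is localized to $t_1\sim 2^{-l}$, $t_2\sim 2^{-l-4}$, so $t_1>t_2>0$ and $t_1-t_2\gtrsim 2^{-l}$. All constants that depend polynomially on $2^l$ will be absorbed into $C_l\le 2^{Cl}$.

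\textbf{Step 1 (Cauchy--Schwarz and $TT^*$).} Dualize against $g\in L^\infty(\zozo)$ with $\|g\|_\infty\le1$. For fixed $f_2$ the form is linear in $f_1$. Substituting $y=x+t$ gives
$$
\Lambda=\int f_1(y)\Big(\int g(y-t)f_2(y-t+P(t))\omega_l(t)\,dt\Big)dy ,
$$
and by Cauchy--Schwarz $|\Lambda|\le\|f_1\|_2\,\|\mathcal A\|_2$. Expanding $\|\mathcal A\|_2^2$ gives a double integral in $(t,s)$. In it we change variables $x=y-t$ and $s=t+\tau$, and we localize $x$ to a bounded set. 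We then split off the region $|\tau|\le\la^{-\ep}$. Its contribution is $\lesssim 2^{Cl}\la^{-\ep}\|f_2\|_2^2$, since the $\tau$-measure is small. On the remaining region, Cauchy--Schwarz in $x$ (after freezing $\tau$) removes $g$. We are left to bound, uniformly in $|\tau|\gtrsim\la^{-\ep}$,
$$
\Big|\iint f_2(x+P(t))\,\overline{f_2(x-\tau+P(t+\tau))}\,\omega_l(t)\omega_l(t+\tau)\,dt\,dx\Big| .
$$
Equivalently, we need $L^2$ decay for the operator $f\mapsto\int f(x+P(t+\tau)-P(t)-\tau)\ldots$. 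On the Fourier side, with $\xi$ restricted to $|\xi|\sim\la$, this is governed by the oscillatory integral
$$
I(\xi,\tau)=\int e^{i\xi\cdot(P(t+\tau)-P(t))}\,\omega_l(t)\omega_l(t+\tau)\,dt .
$$
More precisely, the bilinear structure gives integrals of the form $\int e^{i(\xi\cdot P(t)-\eta\cdot P(t+\tau))}\cdots$, which we treat the same way.

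\textbf{Step 2 (decay via sublevel sets).} The phase is $\phi(t)=\xi\cdot(P(t+\tau)-P(t))$, a polynomial of degree $\le 2$ in $t$. Its Hessian is
$$
\xi_2\cdot 3\,\mathrm{diag}(2\tau_1,2\tau_2),
$$
and its gradient is
$$
\big(2\xi_1\tau_1+3\xi_2(2t_1\tau_1+\tau_1^2),\;2\xi_1\tau_2+3\xi_2(2t_2\tau_2+\tau_2^2)\big).
$$
Instead of stationary phase we use the Christ--Durcik--Guo--Roos scheme. First, we decompose $f_2$ into pieces whose frequencies lie in $\la^{-\ep}$-sectors. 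Second, we bound the form by the measure of a sublevel set
$$
\{(t,\tau):|\nabla_t\phi|\le\la^{1-\ep}\}.
$$
The new sublevel set estimate will state that, for $|\xi|\sim\la$ and $t,t+\tau$ in the support, this set has measure $\lesssim 2^{Cl}\la^{-c\ep}$. To prove it, note that for $|\xi_2|\gtrsim\la^{1-\ep/10}$ both gradient components vanishing would force $t_1,t_2$ to lie within $O(\la^{-c})$ of the values $-\tau_i/2-\xi_1/(3\xi_2)$. This is a small set unless $|\tau_i|$ is small, and those $\tau$ were already removed. When $|\xi_2|\ll\la^{1-\ep/10}$, we have $|\xi_1|\sim\la$, and then the gradient has size $\gtrsim\la|\tau|$; this is nondegenerate because $t_1\ne t_2$ and $\tau\neq0$. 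In the full bilinear version the vectors $\xi$ and $\eta$ differ, and we must use the joint map $(t_1,t_2)\mapsto(t_1^2+t_2^2,t_1^3+t_2^3)$. Its Jacobian is $6t_1t_2(t_2-t_1)$, which is $\gtrsim 2^{-3l}$ on the support. This Jacobian is exactly what makes the pattern genuinely two-dimensional.

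\textbf{Main obstacle.} The expected difficulty is the sublevel set estimate in the bilinear (non-diagonal) situation, where two frequencies $\xi,\eta$ interact. We plan to handle it by a polynomial sublevel set lemma, of Carbery--Christ--Wright type, for the resultant-like determinant built from $P$ and its translates. We will carefully track the $2^{-l}$ lower bounds on $t_1$, $t_2$ and $t_1-t_2$ to keep $C_l\le 2^{Cl}$. The endpoint of the argument is to combine the bounds: we choose $\ep$ small, and the combined estimate yields $\la^{-\gamma}$ with $\gamma\sim\ep/4$ after taking square roots twice.
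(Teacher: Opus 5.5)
There is a genuine gap, and it sits where you remove $g$. After your substitution, $\|\mathcal A\|_2^2=\int\!\!\int g(x)\overline{g(x-\tau)}\,F_\tau(x)\,dx\,d\tau$ with $F_\tau(x)=\int f_2(x+P(t))\overline{f_2(x-\tau+P(t+\tau))}\,\omega_l(t)\omega_l(t+\tau)\,dt$. Because $g$ is an arbitrary bounded function, removing it (by $|g|\le 1$, or by Cauchy--Schwarz in $x$) leaves $\int|F_\tau|$ or $\|F_\tau\|_{L^2}$ over a bounded set. The absolute value is then \emph{inside} the $x$-integral.

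The quantity you display is $|\int F_\tau\,dx|=|\langle f_2,S_\tau f_2\rangle|$, where $S_\tau f(u)=\int f(u+P(t+\tau)-P(t)-\tau)\,\omega_l(t)\omega_l(t+\tau)\,dt$. This is a linear, diagonal object, and Plancherel does control it by $I(\xi,\tau)$. But it is not what remains after removing $g$. The true remainder $\int|F_\tau|$ is again a bilinear form of the same type as $T_l$: the pattern $x+P(t)$, $x-\tau+P(t+\tau)$, $t\in\mathbb R^2$, tested against an arbitrary bounded $h$, now with both inputs at frequency $\sim\lambda$. On the Fourier side it equals $\iint \hat f_2(\xi)\overline{\hat f_2(\eta)}\,\hat h(\eta-\xi)\,m_\tau(\xi,\eta)\,d\xi\,d\eta$, with the off-diagonal multiplier $m_\tau(\xi,\eta)=\int e^{2\pi i(\xi\cdot P(t)-\eta\cdot(P(t+\tau)-\tau))}\omega_l(t)\omega_l(t+\tau)\,dt$. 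Since $\hat h$ is uncontrolled, bounds on $m_\tau$, or on sublevel sets of its phase, do not by themselves give an $L^2\times L^2\to L^1$ estimate.

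Applying Cauchy--Schwarz in $(x,\tau)$ instead yields a fourfold correlation of $f_2$, and extracting decay from it would need a further degree-lowering step that you do not supply. You flag exactly this off-diagonal case as the main obstacle and defer it to an unspecified Carbery--Christ--Wright lemma, so the core of the theorem is not addressed.

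Even the diagonal computation has a problem. Excising $|\tau|\le\lambda^{-\ep}$ does not bound each $|\tau_i|$ from below. Moreover, when $|\tau_i|\sim\lambda^{-\ep}$, the condition $|\tau_i|\,|2\xi_1+6\xi_2t_i+3\xi_2\tau_i|\le\lambda^{1-\ep}$ places no restriction on $t_i$.

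What is missing is a mechanism that turns non-degeneracy into bilinear decay. The paper obtains this statement as the case $R=1$, $A=2^{l}$ of Theorem~\ref{t24}; condition \eqref{e91} holds there for $l>l_0$. It then reduces to an $L^\infty\times L^\infty$ bound by interpolation, localizes both functions to boxes of side $\lambda^{-\gamma}$ with $\gamma\in(\frac12,1)$, and splits each local piece using Lemma~\ref{t18}.

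The $f_\flat$ part has small local $\iint|\widehat{D_s f_\flat}|^2$ at low frequencies. It is handled by $TT^*$, using $\gamma>\frac12$ to linearize $P(t+s)\approx P(t)+M(\bar t)s$, the six-preimage support bound of Lemma~\ref{t30}, and $|\det M(\bar t)|\gtrsim A^{-3}$.

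The $f_\sharp$ part is locally a sum of $O(\lambda^{\delta_j})$ modulated bumps. There the sublevel-set input is the counting bound of Theorem~\ref{t100}: only $\lesssim\lambda^{4\gamma-\tilde\delta}$ of the $\sim\lambda^{4\gamma}$ local index pairs $(m_1,m_2)$ can be resonant. This is proved using the non-degeneracy of $\Phi(s,s')=\Psi(s,t_0)-\Psi(s',t_0')$. Nothing in your proposal plays the role of this structured/unstructured decomposition or of this resonance count.
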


To obtain a full version of Theorem~\ref{t3} for arbitrary $N$, we require  a more general result that also enables the study of pointwise convergence of the double ergodic average 
     $$
    A_N(f,\,g)(x)=\frac{1}{N^2}\int_{[0,N]^2}f(T_1^{t_1}T_2^{t_2}x)g(T_1^{P_1(t)}T_2^{P_2(t)}x)dt_1dt_2,
    $$
where $(T_1^{t_1})_{t_1\in\bbr}$, $(T^{t_2}_2)_{t_2\in\bbr}$ are two measure-preserving and commuting flows in  $(X,\,\mathcal{B}(X),\,\mu)$, a probability measure space, such that the mapping
    \begin{equation}\label{e84}
        (x,\,t_1,\,t_2)\in X\times\bbr^2\to T^{t_1}_1T_2^{t_2}x\in X
    \end{equation}
    is measurable.
   
\begin{thm}\label{t8}
    Suppose that $f,\,g\in L^\infty(X)$. 
    Then 
    $$
        \lim_{N\to\infty}A_N(f,\,g)(x)
    $$
    exists $\mu$-almost everywhere in $X$.
\end{thm}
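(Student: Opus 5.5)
We follow the now-standard scheme for pointwise convergence of polynomial double averages along flows (Bourgain's nonlinear ergodic theorem, and the continuous versions of Christ--Durcik--Roos and Krause--Mirek--Peluse--Wright). The first step is a \emph{transference}. Since the flows commute and the action map \eqref{e84} is measurable, the Calder\'on transference principle lets us prove everything for the model operator on $\bbr^2$:
\[
M_N(F,G)(y)=\frac1{N^2}\int_{[0,N]^2}F(y+t)\,G(y+P(t))\,dt .
\]
We freeze $x$ and set $F(y)=f(T_1^{y_1}T_2^{y_2}x)$, $G(y)=g(T_1^{y_1}T_2^{y_2}x)$ on a large box. It also suffices to take $N$ along a lacunary sequence $N=\rho^k$ with $\rho>1$ close to $1$. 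Indeed, for $f,g\ge 0$ and $N\in[\rho^k,\rho^{k+1}]$ the averages differ by $O(\rho-1)\|f\|_\infty\|g\|_\infty$, and we may then let $\rho\to1$.

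Next we decompose dyadically in $t$. We write $[0,N]^2$ as a union of anisotropic dyadic pieces and rescale. By the parabolic scaling $t\mapsto Nt$, $y\mapsto (N^2y_1,N^3y_2)$, each piece becomes $T_l(F_N,G_N)$ with fixed $N$-independent geometry, up to the harmless modifications allowed in Theorem~\ref{t4} (we will need Theorem~\ref{t4} for all bump shapes $\omega_l$ adapted to the pieces, which the same proof gives). Pieces with small $|t_1|$ or $|t_2|$ contribute $O(2^{-l})$, so their tail is handled trivially by $L^\infty$ bounds. We then split $G=G_{\le \la}+\sum_{j}G_{2^j\la}$ at frequency scale $\la$ relative to $N$, i.e.\ in rescaled coordinates.

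\emph{High frequencies.} Theorem~\ref{t4}, localized to unit cubes and summed, gives
\[
\|T_l(F,G_{2^j\la})\|_{L^1}\lesssim 2^{Cl}(2^j\la)^{-\gamma}\|F\|_2\|G\|_2 .
\]
Interpolating this with the trivial $L^\infty$ bound shows that the high-frequency part of the maximal function $\sup_k|M_{\rho^k}(F,G_{>\la_k})|$ has norm decaying in $\la$. To beat the $2^{Cl}$ loss we choose $\la$ depending on $l$ (say $\la=2^{Al}$), and then the sum over $j$, $l$ and lacunary $k$ is summable in $L^1$ via a square-function/Sobolev embedding in the scale parameter. This gives an $\ell^2_k$ bound, hence a.e.\ convergence to $0$ of this part.

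\emph{Low frequencies.} For $G_{\le\la}$, which is smooth at scale $N^2\times N^3$ relative to the translation $P(t)$, we Taylor expand: $G(y+P(t))$ is essentially a Fourier multiplier average in $y$. The bilinear average then becomes a linear average of $F$ against a smooth $G$-dependent weight. Convergence here reduces to linear results. We need the variational (or oscillation) inequality for the one-parameter averages $\frac1{N^2}\int_{[0,N]^2}F(y+t)\,dt$, which follows from Bourgain/Jones--Seeger--Wright $r$-variation bounds for smooth averages, together with a variational bound for the multiplier family $m(N^2\xi_1,N^3\xi_2)$ acting on $G$. These give oscillation bounds along lacunary sequences with constants polynomial in $\log\la$. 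Combining this with the high-frequency decay $\la^{-\gamma'}$, and optimizing $\la$ in terms of the number of oscillation jumps, yields a finite oscillation estimate of the form
\[
\sum_{i}\big\|\sup_{N_i\le \rho^k<N_{i+1}}|M_{\rho^k}-M_{N_i}|\big\|_{1}\le o(J)
\]
for any $J$ increasing times. This implies a.e.\ convergence.

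The \emph{main obstacle} is the low/high interface: controlling the error term in the Taylor approximation uniformly in $k$, while keeping the $2^{Cl}$ loss from Theorem~\ref{t4} summable. We would first try the $\ell^2$-in-$k$ square function approach, using Sobolev in the dilation parameter (differentiating $T_l$ in $N$ gives an operator of the same form with a modified bump). We expect this to give the needed $\la^{-\gamma}$ gain for the maximal high-frequency piece.
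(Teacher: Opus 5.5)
\textbf{There is a genuine gap.} The step that actually delivers almost everywhere convergence is never carried out. You route everything through an oscillation inequality for $M_{\rho^k}$. The only ingredient you really supply is the high-frequency bound from the Sobolev improving estimate. The low-frequency half is only asserted, and you yourself leave the ``low/high interface'' open as the main obstacle.

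Moreover, the low-frequency reduction is wrong as stated. To make the high part small you need $\lambda\to\infty$. But then $G_{\le\lambda}$ is smooth only at scale $N^2/\lambda\times N^3/\lambda$, whereas $P(t)$ ranges over a box of size $\sim N^2\times N^3$. So no Taylor expansion turns $G_{\le\lambda}(y+P(t))$ into a multiplier applied to $G$ times a linear average of $F$. The product structure you invoke (linear variation for $\frac1{N^2}\int F(y+t)\,dt$ together with variation for $m(N^2\xi_1,N^3\xi_2)$ acting on $G$) holds only when the frequency of $G$ relative to scale $N$ is $\ll 1$. For relative frequencies between $1$ and $\lambda$, what you actually have is a superposition over $\eta$ of averages of $F$ twisted by $e^{2\pi i P(t)\cdot\eta}$, with phases up to size $\lambda$. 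A variational bound for that family ``with constants polynomial in $\log\lambda$'' is essentially the full bilinear problem, not a consequence of linear Jones--Seeger--Wright theory.

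\textbf{The missing idea} is that no quantitative oscillation estimate is needed. The paper proves the stronger statement with $f\in L^\infty$, $g\in L^2$ by the Banach principle.
\begin{itemize}
\item The bilinear maximal function is bounded (Proposition~\ref{t40}, which is just H\"older plus the linear maximal theorem for polynomial flows). So it suffices to treat $g$ in the dense class $A+B$ of Lemma~\ref{t15}.
\item If $g$ is invariant under both flows, then $A_N(f,g)=g\,\mathcal A'_N f$, and the linear pointwise theorem applies.
\item If $g=h\circ T_1^{\delta}-h$ is a coboundary, then after transference and parabolic rescaling the shift becomes $\delta N^{-2}$. This kills the low frequencies of $h$, while Theorem~\ref{t27} gives decay on the high ones. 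The result is Proposition~\ref{t20}, $\|A_N(f,g)\|_{L^1}\lesssim N^{-\gamma}\|f\|_2\|h\|_2$.
\item This bound is summable along $N=\alpha^n$, so $A_{\alpha^n}(f,g)\to 0$ a.e., and Lemma~\ref{t14} removes the lacunarity.
\end{itemize}

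Even inside your own decomposition the oscillation machinery is avoidable. Along $\rho^k$, let $g_k$ be the part of $g$ whose frequency relative to scale $\rho^k$ lies in $[\epsilon,\lambda]$. Then $\sum_k\|g_k\|_2^2\lesssim_{\epsilon,\lambda,\rho}\|g\|_2^2$, since each frequency falls in that band for only $O(\log(\lambda/\epsilon)/\log\rho)$ values of $k$. Also $\|A_N(f,g_k)\|_2\le\|f\|_\infty\|g_k\|_2$. Hence that piece tends to $0$ a.e.\ for free, and only relative frequencies $\le\epsilon$ need the product-with-a-linear-average treatment.
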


\begin{rmk}
    Our result actually holds when $f,g\in L^p(X)$ for a broader range of $p$. We will not pursue this level of generality in the present work.
\end{rmk}

The multiple ergodic average was introduced by Furstenberg \cite{Furstenberg1977} in his  ergodic-theoretic proof of the Szemer\'edi theorem. Variants of such averages were later studied by Bourgain \cite{Bourgain1990}. A systematic treatment was undertaken by Bergelson and  Leibman \cite{Bergelson1996}, who subsequently formulated the following conjecture in \cite{Bergelson2002}. 

\begin{conj}[{\cite[Section 5.5]{Bergelson2002}}]\label{t090}
    Let $G$ be a nilpotent group of measure preserving transformations of a probability space $(X,\mathcal B,\mu)$. Then for any $T_1,\dots, T_l\in G$, and any $f_1,\dots, f_d\in L^\nf(X)$, the limit of the average
    $$
    \f1N\sum_{n=1}^N\prod_{j=1}^d f_j(T_1^{p_{j,1}(n)}\cdots T_l^{p_{j,l}(n)}x)
    $$
    exists in $L^2$-norm and almost everywhere.
\end{conj}

The $L^2$-norm case of this conjecture was  resolved by \cite{Walsh2012}, while pointwise convergence remains largely open. Significant advances toward the pointwise convergence problem have been achieved  in \cite{Krause2020}, \cite{Ionescu2023},  \cite{Kosz2024}. 
Continuous analogues of Conjecture~\ref{t090} have been studied in \cite{CDKR}, \cite{Frantzikinakis2023} and \cite{k1}.

The proof of Theorem~\ref{t8} closely follows the strategy in \cite{CDKR}; our main contribution is the following bilinear estimate. 

\begin{thm}\label{t27}
    For $l\in\bbn$, we define 
$$
r_{l}(x):=2^{2l}\chi_{[0,2^{-l}]^2}(x).
$$
Let 
$$
T^R_l(f_1, f_2)(x)=\int_{\bbr^2}f_1(x+B_Rt)f_2(x+P(t))r_{l}(t)dt,
$$
	    where $B_Rt=(\frac {t_1 }R,\,\frac{t_2}{R^2})$. There exists $\gamma>0 $ such that, for all $\la\ge 1$, $l\in\bbn$, and every $L^2$ function $f_2$ with $\spt \wh f_2\sset B(0,\, 2\la)\setminus B(0,\, \la)$, we have
	\begin{equation}\label{e95}
		\|T_l^R(f_1,f_2)\|_{L^{1}(\zozo)}\le C_{l}\la^{-\gamma}\|f_1\|_{L^2(\bbr^2)}\|f_2\|_{L^2(\bbr^2)}\,,
	\end{equation}
    where $C_{l}\le2^{Cl}$ for some $C>0$ independent of $R$ and $\la$.
\end{thm}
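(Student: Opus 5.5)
We follow the standard route to bilinear Sobolev improving estimates from \cite{Christ2020} and \cite{CDKR}: duality, a $TT^*$-type argument that removes $f_1$, and a sublevel set estimate for the phase. Fix $h\in L^\infty(\zozo)$ with $\|h\|_\infty\le1$. Then $\langle T^R_l(f_1,f_2),h\rangle=\int f_1(y)\,\mathcal A(y)\,dy$, where
$$\mathcal A(y)=\int h(y-B_Rt)\,f_2(y-B_Rt+P(t))\,r_l(t)\,dt.$$
By Cauchy--Schwarz it suffices to show $\|\mathcal A\|_{L^2}\lesssim 2^{Cl}\la^{-\gamma}\|f_2\|_2$. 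Expanding $\|\mathcal A\|_2^2$ and substituting $x=y-B_Rt$ gives
$$\int\!\!\int h(x)\bar h(x+B_R(t-s))\,f_2(x+P(t))\,\bar f_2(x+B_R(t-s)+P(s))\,r_l(t)r_l(s)\,dt\,ds\,dx.$$
Now insert the Fourier representation of $f_2$, supported in $|\xi|\sim\la$. For each fixed $x$, the $(s,t)$-integral becomes an oscillatory integral with phase
$$\xi\cdot P(t)-\eta\cdot(B_R(t-s)+P(s)).$$
The difficulty is that $h$ is only bounded, so we cannot integrate by parts in $x$. We therefore proceed as in \cite{Christ2020c}.

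First we localize. We cut the $t$-square of side $2^{-l}$ into cubes of side $\la^{-\ep}$. On each cube we use a second Cauchy--Schwarz/van der Corput step in the variable $t$. Since $B_Rt$ enters only linearly, differencing in the $t$ variable (replacing $t$ by $t$ and $t+u$) eliminates $f_1$ and $h$ up to a modulation, in the spirit of Gowers-type bounds. What remains is a trilinear expression in $f_2$ involving $P(t)-P(t+u)$. Up to errors controlled by a power of $\la$, its size is governed by the measure of the sublevel set
$$\{(t,u)\in[0,2^{-l}]^4:\ |\det DP(t)-\det DP(t+u)\text{-type Jacobian}|<\la^{-\de}\}.$$
For the pattern $P(t)=(t_1^2+t_2^2,\,t_1^3+t_2^3)$ we have $\det DP(t)=6t_1t_2(t_2-t_1)$. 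The relevant quantity is the determinant of the matrix whose rows are $\xi$-dual combinations of $\nabla P_1,\nabla P_2$ at $t$ and at $t+u$. This is a polynomial of bounded degree, so it does not vanish identically.

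The key step is a uniform sublevel estimate: for unit $\xi$, the set where $|\xi\cdot(\nabla P(t)-\nabla P(s))|\le\rho$ has measure at most $2^{Cl}\rho^{c}$, and this must hold uniformly in $R$. Uniformity holds because $R$ enters only through the linear map $B_R$, and after differencing only $P$ remains. We prove it by viewing $\xi\cdot P$ as a polynomial in four variables with unit coefficient vector and applying a Carbery--Christ--Wright / polynomial sublevel bound. We also use that on $\spt r_l$ the variables $t_1,t_2$ range in $[0,2^{-l}]$, so degenerations near $t_1=0$, $t_2=0$ or $t_1=t_2$ lose only $2^{Cl}$ factors. This is where the constant $C_l\le 2^{Cl}$ comes from.

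Given this bound, we split the integral into the sublevel region, which is small by the estimate, and its complement. On the complement we integrate by parts in $s$ (or $t$), gaining $(\la\rho)^{-1}$. Optimizing $\rho=\la^{-1/2}$ and tracing back through the two Cauchy--Schwarz steps gives $\la^{-\gamma}$ for some small $\gamma>0$.

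The main obstacle is the sublevel set estimate. It must hold uniformly in the direction $\xi$ and in $R$, including the degenerate case $R\to\infty$ or $R\to0$, where $B_R$ collapses a coordinate. To handle it, we first rescale $t$ to $[0,1]^2$, absorbing the $2^{-l}$ scale into $C_l$. We then verify, by explicit computation with the cubic and quadratic terms, that some derivative of order at most $3$ of the relevant polynomial is bounded below by a constant times $|\xi|$. This gives a van der Corput-type sublevel bound uniformly.
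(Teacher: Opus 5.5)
\textbf{There is a genuine gap at the core.} The passage from your Cauchy--Schwarz expression to a sublevel-set problem is asserted rather than proved, and the mechanism you describe does not work.

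First, differencing in $t$ does not turn $h$ or $f_1$ into a modulation. Replacing $t$ by $t+u$ turns $h(x+B_Rt)$ into $h(x+B_R(t+u))\overline{h(x+B_Rt)}=D_{B_Ru}h(x+B_Rt)$, which is again an arbitrary bounded function. What the linearity of $B_Rt$ does allow is a second Cauchy--Schwarz, in $(x,u)$ after substituting $x=y-B_Rt$, $u=s-t$. That leaves a four-linear form in $f_2$ alone. This form is again a bilinear averaging problem of the same kind: for each $u$, $G\mapsto\int G(y+P(t)-B_Rt)\overline{G}(y+P(t+u)-B_Rt)\,dt$, now with both entries at frequency $\sim\la$.

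Bounding this form by $\la^{-c}\|f_2\|_2^4$ is the whole difficulty, and it does not follow from a measure bound for $\{|\xi\cdot(\nabla P(t)-\nabla P(s))|\le\rho\}$. If you expand $f_2$ in Fourier and use only pointwise bounds on the oscillatory kernel, summing $|\widehat{f_2}|$ over the annulus costs a factor $\la^{2}$, and this loss is sharp. At degenerate configurations, for example all four frequencies equal, the phase vanishes on $\{u=0\}\cup\{t=t'\}$. A heuristic count there suggests the kernel is only of size roughly $\la^{-2}$, so a uniform kernel bound cannot close the argument.

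The paper gets around this with ingredients absent from your plan:
\begin{enumerate}
\item spatial localization to cubes of side $\la^{-\gamma}$ with $\gamma>1/2$, so that a local $TT^*$ with the linearization $P(t+s)\approx P(t)+M(\overline t)s$ is possible;
\item Lemma~\ref{t30}, which shows that $(x,t)\mapsto(x+B_Rt,x+P(t))$ is at most six-to-one with Jacobian at least $A^{-1}$;
\item the structure/uniformity decomposition $f=f_\flat+f_\sharp$ of Lemma~\ref{t18}, where the $\flat$ part is controlled by the smallness of $\widehat{D_sf_\flat}$ at low frequencies;
\item for the $\sharp$ part, the counting bound $\#I\lesssim R^3\la^{4\gamma-\tilde\delta}$ of Theorem~\ref{t100} for resonant pairs $(m_1,m_2)$. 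It is proved by pigeonholing and the non-degeneracy of $\Phi(s,s')=\Psi(s,t_0)-\Psi(s',t_0')$, and is a statement about discrete configurations, not the measure of a polynomial sublevel set.
\end{enumerate}

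\textbf{Your treatment of degeneracies is also incorrect.} Since $r_l=2^{2l}\chi_{[0,2^{-l}]^2}$ is a sharp cutoff, its support contains the lines $t_1=0$, $t_2=0$, $t_1=t_2$. It also meets the curve where $6t_1t_2^2-6t_1^2t_2-3t_2^2/R-2t_1/R^2+1/R^3$, the Jacobian of $t\mapsto P(t)-B_Rt$, vanishes. These sets are not at distance $2^{-Cl}$ from the support, so they cannot be absorbed into $C_l\le 2^{Cl}$. Moreover, that Jacobian depends on $R$, so the claim that ``after differencing only $P$ remains'' does not give uniformity in $R$.

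Handling exactly this is how the paper proves the statement:
\begin{enumerate}
\item It first establishes Theorem~\ref{t24} for smooth cutoffs satisfying the non-degeneracy conditions with parameter $A$, at a cost $A^K$.
\item It applies this to a smooth $r$ that vanishes where one of $|t_1|$, $|t_2|$, $|t_1-t_2|$ or that Jacobian is below $\la^{-\ep}$ (and near the outer edges). It takes $A=2^{2l}\la^{2\ep}$ and $\ep\le\gamma/(100K)$.
\item It bounds the excised region $\Omega$, of measure $\lesssim\la^{-\ep/8}$, trivially by Cauchy--Schwarz.
\end{enumerate}
Without a main estimate carrying explicit polynomial dependence on such a non-degeneracy parameter, this excision step is unavailable, and so is the rough-cutoff statement.
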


Unlike   the separated-support assumption on $r_l$ in Theorem~\ref{t4},  the variables
$t_1$ and $t_2$ in $\spt r_l$ here are not separated. This creates some technical obstacles; for instance, the change of variables $s=(t,P(t))$ for $t\in \spt r_l$ in Theorem~\ref{t27} fails to be  invertible. 
To overcome these issues, we must partition the  range of $t$ and track the relevant parameters carefully.

Let $R\ge1$, $A\ge1$. 
Let $r(t)$ be a smooth function which satisfies that for any $t\in\spt r\subseteq\zozo$, $|t_1|>A^{-1}$, $|t_2|>A^{-1}$, $|t_1-t_2|>A^{-1}$ and
    \begin{equation}\label{e91}
        \Big|\det 
        \begin{bmatrix}
            2t_1-\frac{1}{R}&2t_2\\
            3t_1^2&3t^2_2-\frac{1}{R^2}
        \end{bmatrix}\Big|=\big|6t_1t_2^2-6t_1^2t_2-\frac{3t_2^2}{R}-2\frac{t_1}{R^2}+\frac{1}{R^3}\big|\ge A^{-1}
    \end{equation}
    and  $\|\partial^\alpha r\|_\infty\le A^{|\alpha|},\quad \forall\alpha\in\bbn^2.$
    We define
        \begin{equation}\label{e087}
            T(f_1, f_2)(x)=\int_{\bbr^2}f_1\big(x+B_Rt\big)f_2(x+P(t))r(t)dt.
       \end{equation}

We are now ready to state the main technical contribution of this note: a generalization of Theorem~\ref{t4}. We observe that  Theorem~\ref{t4} is a special case of Theorem~\ref{t24} by taking $A=2^l$.
  
\begin{thm}\label{t24}

    There exists $\gamma>0$, $K>0$ such that, for all $\la\ge 1$, $R\ge1$, $A\ge1$ and every $f_2\in L^2(\bbr^2)$ with $\spt \wh f_2\sset B(0,\, 2\la)\setminus B(0,\, \la)$, we have
	\begin{equation}\label{e90}
		\|T(f_1,f_2)\|_{L^{1}(\zozo)}\le C A^K \la^{-\gamma}\|f_1\|_{L^2(\bbr^2)}\|f_2\|_{L^2(\bbr^2)}\,,
	\end{equation}
    where the constant $C$ is independent of $R\ge 1$.

\end{thm}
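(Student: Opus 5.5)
We follow the now standard route of Li, Christ and Christ--Durcik--Roos: reduce to a decay estimate for a dual trilinear form, apply a $TT^*$/Cauchy--Schwarz argument to remove $f_1$, and control the resulting oscillatory integral with a sublevel set estimate for the curved map $t\mapsto P(t)$ restricted to $\spt r$.

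\emph{Duality and localization.} By duality it suffices to bound
$$\Lambda(f_0,f_1,f_2)=\int f_0(x)\,T(f_1,f_2)(x)\,dx$$
with $\|f_0\|_\infty\le1$ and $\spt f_0\subset\zozo$. Since $\spt r\subset\zozo$ and $x\in\zozo$, we may replace $f_1,f_2$ by their restrictions to a fixed ball. Change variables $y=x+B_Rt$; then $x+P(t)=y+P(t)-B_Rt$. Set $Q(t)=P(t)-B_Rt=(t_1^2+t_2^2-t_1/R,\;t_1^3+t_2^3-t_2/R^2)$. Its Jacobian is exactly the determinant in \eqref{e91}, so it is bounded below by $A^{-1}$ on $\spt r$. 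This gives
$$\Lambda=\int f_1(y)\int f_0(y-B_Rt)\,f_2(y+Q(t))\,r(t)\,dt\,dy .$$

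\emph{Removing $f_1$.} Cauchy--Schwarz in $y$ gives
$$|\Lambda|^2\le\|f_1\|_2^2\int\!\!\iint f_0(y-B_Rt)\,\overline{f_0(y-B_Rs)}\,f_2(y+Q(t))\,\overline{f_2(y+Q(s))}\,r(t)r(s)\,dt\,ds\,dy .$$
Expand $f_2$ in Fourier variables $\xi,\eta$ with $|\xi|,|\eta|\sim\la$; the $y$-integral then becomes a Fourier coefficient of $f_0(\cdot-B_Rt)\overline{f_0(\cdot-B_Rs)}$. The alternative is to use the $\sigma$-uniformity framework: split $f_0$ into a structured part (smooth at scale $\la^{-\ep}$) and a part with small Gowers-type box norm. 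The first option is more direct. The structured part is handled trivially, because $T$ with $f_0$ frequency-localized below $\la^{\ep}$ and $f_2$ at frequency $\la$ gives decay by nonstationary phase in $t$, since $\nabla Q$ is invertible with bounds $A^{O(1)}$. The remaining part then reduces to an oscillatory integral estimate of the form
$$\Big|\int e^{i(\xi\cdot Q(t)-\eta\cdot Q(s)+\zeta\cdot(B_Rt-B_Rs))}\,r(t)r(s)\,dt\,ds\Big|,$$
which must be averaged over a frequency set of size $\la$.

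\emph{Sublevel set estimate (main obstacle).} The crux is to show that for generic $(\xi,\eta)$ the phase has no degenerate critical points. After a change of variables $s=t+h$, this amounts to a bound of the form
$$\bigl|\{(t,h):\ |\det(D Q(t+h)^{T}\eta - \ldots)|<\de\}\bigr|\lesssim A^{K}\de^{c}.$$
Equivalently, the curvature of $t\mapsto \xi\cdot Q(t)$ (its Hessian determinant) must not vanish to infinite order on a large set. Concretely, I would prove that the polynomial $\det\nabla^2(\xi\cdot Q)(t)=\det\begin{bmatrix}2\xi_1+6\xi_2t_1&0\\0&2\xi_1+6\xi_2t_2\end{bmatrix}$, together with the $h$-difference version, has sublevel sets of measure $\lesssim A^{K}\de^{c}$ uniformly in $R$. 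Here the $R$-dependent linear terms drop out of the Hessian, which is what gives uniformity in $R$, while the conditions $|t_1-t_2|,|t_i|>A^{-1}$ prevent simultaneous vanishing of both diagonal entries. Polynomial sublevel estimates (Carbery--Christ--Wright type, with constants depending polynomially on the coefficient sizes) then give $\de^{c}$ decay. Finally, van der Corput/stationary phase with derivative bounds $\|\partial^\alpha r\|\le A^{|\alpha|}$ turns this into $\la^{-\gamma}A^{K}$ decay, yielding \eqref{e90} with constants independent of $R$.
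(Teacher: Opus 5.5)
Your proposal has a genuine gap, and it sits exactly where you say the argument is easy.

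\textbf{The structured part is not low-frequency.} A Gowers-box ($U^2$-type) splitting of an arbitrary bounded $f_0$ does not produce a structured part that is smooth at scale $\lambda^{-\epsilon}$. The structured part is a sum of a few \emph{locally modulated} pieces $h_n(x)e^{i\alpha_n\cdot x}$, with $|\alpha_n|$ up to $\sim\lambda$ and modulations that change from one spatial cell to the next; this is what Lemma~\ref{t18}(ii) produces. For example, $e^{i\alpha\cdot x}\chi_{[0,1]^2}$ with $|\alpha|\sim\lambda$ is neither low-frequency nor small in box norm. For such pieces the relevant phase (in the original variables) is $t\mapsto \alpha\cdot B_Rt+\eta\cdot P(t)$. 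It is stationary where $B_R\alpha+DP(t)^{T}\eta=0$, so nonstationary phase is unavailable, and this is the main term, not a trivial one. The paper treats it (the $T_\sharp$ part) by cutting into cells $m=(m_1,m_2)$ of size $\lambda^{-\gamma}$ and proving Theorem~\ref{t100}: the number of cells on which $B_R\alpha_{1,m_1}+DP(\bar t_m)^{T}\alpha_{2,m_2}=O(\lambda^{\gamma'+\sigma})$ is at most $CR^3\lambda^{4\gamma-\tilde\delta}$, out of $\sim R^3\lambda^{4\gamma}$ cells. That non-concentration statement across cells is the sublevel set estimate that matters. It is proved by Cauchy--Schwarz over pairs $(m_2,m_2')$ sharing $m_1$, pigeonholing in $\bar t_{m,2}$, and the lower bound $|s-s'|\,|{-3}R^{-1}(s+s')+6ss'|$ for the Jacobian of $\Phi(s,s')=\Psi(s,t_0)-\Psi(s',t_0')$. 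Nothing in your proposal plays this role.

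\textbf{Your Hessian sublevel estimate cannot replace it.} Since $|\det DQ|\ge A^{-1}$ on $\mathrm{supp}\, r$, the phase $\xi\cdot Q$ alone has no critical points. Adding the linear term $\zeta\cdot B_Rt$ does not change the Hessian. So Carbery--Christ--Wright plus van der Corput only gives pointwise decay $A^{O(1)}\lambda^{-c}$ of the bilinear multiplier $\int r(t)e^{2\pi i(\zeta\cdot B_Rt+\xi\cdot P(t))}\,dt$. Pointwise decay of a bilinear multiplier does not yield an $L^2\times L^2\to L^1([0,1]^2)$ bound. After duality you must integrate against $\widehat{f_0}$ for an arbitrary bounded $f_0$, which is not in $L^1$, and a coherently modulated $f_0$ can track the oscillation of the multiplier in $(\zeta,\xi)$. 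This is precisely why the $\sigma$-uniformity and Christ--Durcik--Roos machinery exists. The paper also notes that the $\sigma$-uniformity route is obstructed here, because $\widehat{f_1}$ cannot be localized to an annulus when $R=1$.

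\textbf{The ``direct'' $TT^*$ step has the same problem.} Without spatial localization, $t$ and $s$ range over all of $\mathrm{supp}\, r$. The factor $f_2(y+Q(t))\overline{f_2(y+Q(s))}$ is then not a small-shift product $D_hf_2$, and the $y$-integral produces an uncontrolled Fourier coefficient of $f_0(\cdot-B_Rt)\overline{f_0(\cdot-B_Rs)}$ rather than an oscillatory integral. The paper first localizes: Lemma~\ref{t30} shows $A_m$ lies in at most six boxes of size $A\lambda^{-\gamma}$, via a degree-six elimination and \eqref{e91}. This gives $|s|\lesssim\lambda^{-\gamma}$ and $P(t+s)-P(t)=M(\bar t)s+O(A^2\lambda^{1-2\gamma})$ with $\gamma>\tfrac12$. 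Only then can the local $U^2$-smallness in Lemma~\ref{t18}(iii), together with the counting bound on $J_{\eta,m_2}$, control the $\flat$ part.
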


The Fourier multiplier related to the bilinear operator $T$ is 
$$
\int_{\bbr^2}r(t)e^{2\pi i(\xi,\eta)\cdot (B_Rt,P(t))}dt.
$$
The two dimensional structure of this problem introduces essential difficulties. Firstly, the critical points of the phase function $(\xi,\eta)\cdot (B_Rt,P(t))$ are more complicated than those of its one dimensional analogues. In particular, we cannot assume that $\spt\wh f_1\subset B(0,\, 2\la)\setminus B(0,\, \la)$ when $R=1$; consequently  the $\sigma$-uniformity approach used in \cite{Durcik2017} and \cite{Chen2020a} is not easily adapted  to prove \eqref{e90}.

We will follow the strategy in \cite{Christ2020c}, though certain advantages available in that setting do not hold for our operator $T$. First, the useful reduction from $R\ge 1$ to  $R=1$ via a corner structure is not applicable. Second, we lack the gain in the number of  functions pieces available in \cite{Christ2020c}; instead, we explore useful restrictions on the support of the bilinear operator, detailed in Lemma~\ref{t30}. 

Similar to \cite{Christ2020c}, we ultimately reduce Theorem~\ref{t24} to a sublevel set estimate. The proof of this estimate is one of the  main technical contributions of this note.

This paper is organized as follows. In Section~\ref{s4}, we prove Theorem~\ref{t3} and Theorem~\ref{t8}. In Section~\ref{s3}, we prove Theorem~\ref{t27} and  reduce Theorem~\ref{t24} to two estimates on $T_\flat$ and $T_\sharp$ via  a structural decomposition. These two estimates are proved in Section~\ref{s4'}
 and Section~\ref{s5'} respectively.

\medskip

{\noindent \bf Notations.}
In $\bbr^n$ we use $B(a,r)$ to denote the ball centered at $a$, with radius $r$. For a function $f$ and a nonnegative function $g$, we say $|f|\lesssim g$ if there exists $c>0$ such that $|f|\le cg$. For a function $f$ and a nonnegative function $g$, we say $|f|\lesssim_l g$ if for each $l$ there exists $c_l>0$ such that $|f|\le c_lg$. Given $E\sset\rn$, we denote by  $\chi_E$ the characteristic function of $E$, i.e. $$\chi_E(x)=\begin{cases}

     1,\, x\in E  \\
     0,\,x\notin E. \end{cases}
$$
$E^c:=\bbr^n\setminus E$ is the complement of $E$.
We call $f\in C^\infty_0(\bbr^n)$ if $f\in C^\infty(\bbr^n)$ and $f$ has  compact support. We define $Q(a,l)=\{x:\|x-a\|_\infty=l/2\}.$ $x\in a+O(r)$ if $\|x-a\|_\infty=O(r).$

\section{Proof of the applications}\label{s4}

\subsection{A Roth theorem}

By a standard argument (see \cite{Christ2020c}, \cite{Bourgain1988b}, \cite{Durcik2017}, and \cite{KMPW}), we sketch below how we can obtain Theorem~\ref{t3} by assuming Theorem~\ref{t27}.

\begin{proof}[Proof of Theorem~\ref{t3}]

For $t=(t_1,t_2)\in\bbr^2$, we define
$$
\tilde \chi_{t_1,t_2}(x)=\frac{1}{4t_1t_2}\chi_{[-t_1,t_1]\times[-t_2,t_2]}(x),
$$
and
$$
Q_{t}f(x):=\tilde \chi_t\ast f(x)=\f{1}{4t_1t_2}\int_{\begin{matrix}
    |u_1-x_1|\le t_1\\|u_2-x_2|\le t_2
\end{matrix}}f(u)du.
$$ 
We first present an auxiliary result. The one-dimensional version was obtained by \cite{Bourgain1988b}.
\begin{lm}\label{l1}
	Suppose that $f(x)\ge 0$ and $0<t_{1,i}\le t_{2,i}<1$ for $i=1,2$, where $t_j=(t_{1,j},\,t_{2,j})$, $j=1,2$. Then we have
	\begin{equation}\label{e000}
		\int_{\zozo}f(x){Q}_{t_1}f(x)Q_{t_2}f(x)dx
        \ge c \Big(\int_{\zozo}f(x)dx\Big)^{3},
	\end{equation}
    where $c>0$ is an absolute constant independent of $f$ and $t_1,\,t_2$. 
\end{lm}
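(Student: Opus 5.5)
The plan is to replace the two box averages by dyadic conditional expectations, which costs only an absolute constant. The resulting two $\sigma$-algebras commute, and the inequality then reduces to Jensen's inequality together with a Sidorenko-type inequality for paths of length three. This argument does not use any ordering between $t_1$ and $t_2$, nor between the coordinates of either one. It only uses that both $t_1$ and $t_2$ lie in $(0,1)^2$, so the hypothesis $t_{1,i}\le t_{2,i}$ plays no role. Throughout I assume, as the statement implicitly intends, that $f\ge 0$ is supported in $\zozo$; if $f$ lives on all of $\bbr^2$, one first replaces $f$ by $f\chi_{\zozo}$, which only decreases the left-hand side.

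\emph{Step 1 (dyadic minorant).} For $t=(\tau_1,\tau_2)\in(0,1)^2$, choose dyadic numbers $\tau_k'=2^{-m_k}\in(\tau_k/2,\tau_k]$. Let $\mathcal F_t$ be the $\sigma$-algebra on $\zozo$ generated by the dyadic rectangles of size $\tau_1'\times\tau_2'$. If $x$ lies in such a rectangle $I$, then $I\subset[x_1-\tau_1,x_1+\tau_1]\times[x_2-\tau_2,x_2+\tau_2]$, because each side of $I$ is at most $\tau_k$. Since $f\ge0$ and $|I|\ge \tfrac14\tau_1\tau_2$, this gives
$$Q_tf(x)\ge \tfrac1{4\tau_1\tau_2}\int_I f\ge \tfrac1{16}\,\mathbb E[f\mid\mathcal F_t](x).$$
Hence it suffices to prove $\int_{\zozo} f\,E_1f\,E_2f\ge(\int f)^3$, where $E_j=\mathbb E[\cdot\mid\mathcal F_{t_j}]$. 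This will give \eqref{e000} with $c=2^{-8}$.

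\emph{Step 2 (structure of the $\sigma$-algebras).} Each $E_j$ is the tensor product $A_j\otimes B_j$ of one-dimensional dyadic conditional expectations. One-dimensional dyadic $\sigma$-algebras are nested, so $E_1E_2=E_2E_1=E_0$. Here $E_0$ is the expectation onto the grid $\mathcal F_0$ of size $\max(\tau'_{1,1},\tau'_{1,2})\times\max(\tau'_{2,1},\tau'_{2,2})$. The join $\mathcal F_{12}=\mathcal F_{t_1}\vee\mathcal F_{t_2}$ is the grid of size $\min\times\min$. The product $E_1f\,E_2f$ is $\mathcal F_{12}$-measurable, so we may replace $f$ by $g=\mathbb E[f\mid\mathcal F_{12}]$, noting that $E_jg=E_jf$ and $\int g=\int f$. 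Now fix one cell $C$ of $\mathcal F_0$. It is split into an $m\times n$ array of $\mathcal F_{12}$-cells with values $g_{ij}\ge0$. Up to symmetry there are two cases.

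\emph{Nested case.} If $\mathcal F_{t_1}\supset\mathcal F_{t_2}$ (or the reverse), then by conditional Jensen and Jensen
$$\int gE_1gE_2g=\int (E_1g)^2E_2g\ge\int (E_2g)^3\ge\Big(\int g\Big)^3.$$

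\emph{Crossing case.} Otherwise, inside $C$ the $\mathcal F_{t_1}$-cells are the columns and the $\mathcal F_{t_2}$-cells are the rows. Then
$$\frac1{|C|}\int_C gE_1gE_2g=\frac1{mn}\sum_{i,j}g_{ij}\,r_i\,c_j,\qquad r_i=\frac1n\sum_k g_{ik},\quad c_j=\frac1m\sum_l g_{lj}.$$
This is the normalized weighted count of paths of length three in the bipartite graph with weights $g_{ij}$. Sidorenko's inequality for paths (Blakley--Roy) gives that it is at least $\big(\frac1{mn}\sum g_{ij}\big)^3=(E_0g)^3$ on $C$. One proves this by writing $\sum g_{ij}r_ic_j$ as a symmetric bilinear form and applying Cauchy--Schwarz twice, or by the standard convexity argument for $P_4$. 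Integrating over the cells $C$ and applying Jensen once more then gives $\int(E_0g)^3\ge(\int g)^3$.

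The main obstacle is the crossing case, where the boxes for $t_1$ and $t_2$ are not nested. Here the elementary Jensen argument fails and one genuinely needs the path-counting (Sidorenko) inequality. If that inequality were unavailable, my fallback would be to prove it directly. Using the identity $\sum_{ij}g_{ij}r_ic_j=\langle r, Gc\rangle$, where $G$ is the matrix $(g_{ij})$, I would apply Hölder in the form $\langle r,Gc\rangle\ge \frac{(\sum_{ij}g_{ij})^3}{(\ldots)}$ via the known three-line proof for walks of length three.
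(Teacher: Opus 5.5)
Your proof is correct and gives the lemma with $c=2^{-8}$, with no ordering hypothesis on $t_1,t_2$. The paper proves nothing here; it only says that the one-dimensional version is due to Bourgain.

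The natural two-dimensional transcription of Bourgain's argument is your nested case: a dyadic minorant for $Q_t$, followed by conditional Jensen and then Jensen. That case is all the paper actually uses. In the proof of Theorem~\ref{t3}, the boxes $(2^{-k'-n_1},2^{-k'-2n_1})$ and $(2^{-k-1},2^{-k-1})$ are coordinatewise ordered. In the appendix, both averages are at square scales.

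Your crossing case, reduced to the bipartite Sidorenko inequality for the path with three edges, goes beyond this. It buys a hypothesis-free statement. That is useful, because the hypothesis as literally written ($t_{1,i}\le t_{2,i}$ with $t_j=(t_{1,j},t_{2,j})$) compares the two coordinates of the same box, and it fails for $(2^{-k'-n_1},2^{-k'-2n_1})$. Your version covers both the literal and the intended (coordinatewise nested) reading.

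One point in the crossing case needs tightening. Blakley--Roy concerns walks for a symmetric matrix. Applied to the bipartite adjacency matrix of an $m\times n$ array, it only gives $\sum g_{ij}R_iC_j\ge 4S^3/(m+n)^2$. That is weaker than what you need when $m\ne n$, and here $m,n$ are independent powers of $2$. Likewise, "Cauchy--Schwarz twice" does not by itself handle this odd-length path.

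There are two easy fixes. You can blow the array up to an $mn\times mn$ one by replicating rows and columns, which preserves normalized densities, and then apply Blakley--Roy. Or you can use the direct H\"older argument. Let $R_i=\sum_k g_{ik}$, $C_j=\sum_l g_{lj}$ and $S=\sum_{i,j}g_{ij}$, with all sums over $g_{ij}>0$. Then
\[
S\le\Big(\sum_{i,j}g_{ij}R_iC_j\Big)^{1/3}\Big(\sum_{i,j}g_{ij}(R_iC_j)^{-1/2}\Big)^{2/3},\qquad \sum_{i,j}g_{ij}(R_iC_j)^{-1/2}\le\Big(\sum_{i,j}\frac{g_{ij}}{R_i}\Big)^{1/2}\Big(\sum_{i,j}\frac{g_{ij}}{C_j}\Big)^{1/2}\le\sqrt{mn}.
\]
Hence $\sum g_{ij}R_iC_j\ge S^3/(mn)$, which is exactly your bound $\frac1{mn}\sum g_{ij}r_ic_j\ge (S/(mn))^3$.
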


To prove Theorem~\ref{t3}, we need only to prove that, for $N=2^{n_1}$, where $n_1\in \bbn$ and any measurable function $g$ on $[0,\,N^2]\times[0,\,N^3]$ satisfying $0\le g\le 1$ and $\int_{\bbr^2} g(x)dx\ge\ep N^5$, we have 
$$
\int_{\bbr^2}\int_{[0,N^2]\times[0,N^3]}g(x)g(x+t)g(x+P(t))dxdt\ge 10 \delta(\ep) N^7.
$$

Let $f(x)=g(N^2x_1,\,N^3x_2)$. $f$ is supported in $\zozo$. $f$ satisfies $0\le f\le 1$ and $\int_{\bbr^2} f(x)dx\ge\ep$. We only need to prove that
\begin{equation}\label{e94}
    \int_{\bbr^2}\int_{\zozo}f(x)f(x+B_Nt)f(x+P(t))dxdt\ge 10 \delta(\ep).
\end{equation}
Fix integers $1<k<k'<k''$, which will be determined later. Let
$$
I=\int_{\bbr^2}\int_{\zozo}f(x)f(x+B_Nt)f(x+P(t))dxdt.
$$
Recall that 
$$
r_{l}(x):=2^{2l}\chi_{[0,2^{-l}]^2}(x).
$$ 
We will estimate 
$$
2^{2k'}I\ge\int_{\bbr^4}f(x)f(x+B_Nt)f(x+P(t))r_{k'}(t)dxdt=I_1+I_2+I_3,
$$
where
\begin{align*}
    &I_1=\int_{\bbr^4}f(x)f(x+B_Nt)Q_{(2^{-k},2^{-k})}f(x+P(t))r_{k'}(t)dxdt,\\
    &I_2=\int_{\bbr^4}f(x)f(x+B_Nt)\big(Q_{(2^{-k''},2^{-k''})}f-Q_{(2^{-k},2^{-k})}f\big)(x+P(t))r_{k'}(t)dxdt,\\
    &I_3=\int_{\bbr^4}f(x)f(x+B_Nt)\big(f-Q_{(2^{-k''},2^{-k''})}f\big)(x+P(t))r_{k'}(t)dxdt.
\end{align*}
In the frequency side, we decompose $f-Q_{(2^{-k''},2^{-k''})}f$  into dyadic pieces and for each piece we apply Theorem~\ref{t27}, then there exists $\gamma'>0$ such that
$$
|I_3|\lesssim2^{Ck'-\gamma' k''},
$$
where $\gamma'$ is taken to be smaller than $\tf12\min\{C,1\}$.
By the Cauchy-Schwarz inequality, we also have
$$
|I_2|\le\|Q_{(2^{-k''},2^{-k''})}f-Q_{(2^{-k},2^{-k})}f\|_2.
$$
By Lemma~\ref{l1}, we can see, for $k'>k+5$,
$$
I_1\gtrsim\int_{\bbr^4}f(x)Q_{(2^{-k'-n_1},2^{-k'-2n_1})}f(x)Q_{(2^{-k-1},2^{-k-1})}f(x)dxdt\gtrsim\ep^3.
$$
Collecting all above estimates, we obtain
\begin{align*}
2^{2k'}I
\gtrsim&\ep^3-C_2(\|Q_{(2^{-k''},2^{-k''})}f-Q_{(2^{-k},2^{-k})}f\|_2+2^{Ck'-\gamma' k''}).
\end{align*}

Taking $k_0=5$, and  let $k_n'=k_n+10$ and $k_{n+1}=k_n''=\frac{C}{\gamma'}k_n'-\frac{\log_2(\ep^3/5C_2)}{\gamma'}$, and using, for any $N_0\in\bbz^+$ and $\xi\in\bbr^2$,
$$\sum_{n=0}^{N_0}|\widehat{\tilde \chi}_{(2^{-k_n},2^{-k_n})}(\xi)-\widehat{\tilde \chi}_{(2^{-k_{n+1}},2^{-k_{n+1}})}(\xi)|\lesssim1,
$$
we find by pigeonholing that there exists $n_0\lesssim\frac{1}{\ep^3}$ such that
$$
\|Q_{(2^{-k_{n_0}},2^{-k_{n_0}})}f-Q_{(2^{-k_{n_0+1}},2^{-k_{n_0+1}})}f\|_2\le \frac{\ep^3}{5}.
$$
It follows
that
$$
I\ge 2^{-2k_{n_0}'}\ep^3
\gtrsim e^{-e^{c_1\ep^{-3}}}
$$
for some absolute constant $c_1>0$.
This finishes the proof.

\end{proof}

\subsection{An Ergodic theorem}

In this subsection, we will reduce Theorem~\ref{t8} to Theorem~\ref{t27}, following the strategy in \cite{CDKR} closely.
   
The following two lemmas are standard; see, for instance, \cite{CDKR}
for related proofs.

\begin{lm}\label{t14}
    Suppose that for any $\alpha>1$, we have $\lim\limits_{n\to+\infty}A_{\alpha^n}(f,\,g)(x)$ exists $\mu$-almost everywhere in $X$. Then $\lim\limits_{N\to+\infty}A_N(f,\,g)(x)$ exists $\mu$-almost everywhere in $X$.
\end{lm}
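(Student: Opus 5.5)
The plan is to use the standard lacunary-to-full-sequence argument for continuous-parameter averages. Two inputs drive it: the monotonicity available for nonnegative functions, and the fact that the averaging region $[0,N]^2$ varies continuously in $N$.

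\textbf{Step 1: reduce to nonnegative functions.} Write $f=f^+-f^-$ and split each into real and imaginary parts; do the same for $g$. Since $A_N$ is bilinear, it suffices to treat $0\le f,g\le M$. We may also add a constant to each function, because $A_N(1,g)$ and $A_N(f,1)$ are handled by the same argument applied to simpler averages. So assume $f,g\ge0$ pointwise.

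\textbf{Step 2: sandwich $A_N$ between lacunary averages.} Fix $\alpha>1$. For $\alpha^n\le N<\alpha^{n+1}$, write
\[
N^2A_N(f,g)(x)=\int_{[0,N]^2}f(T_1^{t_1}T_2^{t_2}x)\,g(T_1^{P_1(t)}T_2^{P_2(t)}x)\,dt .
\]
The integrand is nonnegative and $[0,\alpha^n]^2\subset[0,N]^2\subset[0,\alpha^{n+1}]^2$. This gives
\[
\alpha^{-2}A_{\alpha^n}(f,g)(x)\le A_N(f,g)(x)\le \alpha^{2}A_{\alpha^{n+1}}(f,g)(x).
\]
Let $L_\alpha(x)=\lim_n A_{\alpha^n}(f,g)(x)$, which exists a.e. by hypothesis. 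Then a.e.
\[
\alpha^{-2}L_\alpha\le\liminf_N A_N\le\limsup_N A_N\le\alpha^{2}L_\alpha .
\]

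\textbf{Step 3: let $\alpha\to1$.} Take $\alpha_k=2^{1/k}$ and intersect the countably many full-measure sets. The $L_{\alpha_k}$ are uniformly bounded by $\|f\|_\infty\|g\|_\infty$. From Step 2, $\limsup_N A_N\le\alpha_k^4\liminf_N A_N$ for every $k$. Letting $k\to\infty$ gives $\limsup=\liminf$ a.e., so $\lim_N A_N$ exists a.e.

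\textbf{Main obstacle.} The main obstacle is the reduction to nonnegative functions. Real bilinear splitting gives four terms $A_N(f^\pm,g^\pm)$, each with nonnegative integrands, so the sandwich in Step 2 applies to each one. The only thing to check is that the hypothesis is available for each term. As stated, the lemma assumes lacunary convergence for the given $f,g$ only. I would therefore apply the lemma in the form "for all bounded $f,g$", which is how it is used later, where the lacunary convergence is proved for all $L^\infty$ functions. Measurability of $t\mapsto$ integrand follows from the joint measurability assumption (e84).
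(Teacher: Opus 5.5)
Your sandwich argument in Steps 2--3 is correct whenever the integrand is nonnegative. However, Step 1 is a genuine gap, and the workaround you suggest does not close it. The hypothesis of the lemma concerns only the given pair $(f,g)$. It gives a.e. convergence of $A_{\alpha^n}(f,g)$ and says nothing about $A_{\alpha^n}(f^{\pm},g^{\pm})$. Nor does it say anything about $A_{\alpha^n}(1,g)$ or $A_{\alpha^n}(f,1)$ if you shift by constants; ``the same argument'' would again need a lacunary hypothesis you do not have. Strengthening the hypothesis to ``lacunary convergence for all bounded pairs'' changes the statement, and it is also not how the paper uses the lemma. In the proof of Theorem~\ref{t8}, lacunary convergence is obtained only for $g\in\mathrm{Im}(U_1^{\delta}-I)$, via $\sum_n\|A_{\alpha^n}(f,g)\|_1<\infty$ from Proposition~\ref{t20}, and this class is not closed under $g\mapsto g^{\pm}$. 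Lacunary convergence for all bounded pairs is essentially what the density argument is trying to establish, so your modified lemma would be circular at exactly the point where it is needed. (The paper itself gives no proof here; it defers to the standard argument in the literature.)

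The fix is to use boundedness of the integrand instead of positivity; this handles complex-valued, sign-changing $f,g$ directly. Write $F(t,x)=f(T_1^{t_1}T_2^{t_2}x)\,g(T_1^{P_1(t)}T_2^{P_2(t)}x)$. By \eqref{e84}, measure preservation and Fubini, for a.e. $x$ one has $|F(t,x)|\le\|f\|_\infty\|g\|_\infty$ for a.e. $t$. For $\alpha^n\le N<\alpha^{n+1}$, split the difference as
\[
A_N(f,g)-A_{\alpha^n}(f,g)=\frac{1}{N^2}\int_{[0,N]^2\setminus[0,\alpha^n]^2}F\,dt-\Big(1-\frac{\alpha^{2n}}{N^2}\Big)\frac{1}{\alpha^{2n}}\int_{[0,\alpha^n]^2}F\,dt .
\]
Since $1-\alpha^{2n}/N^2\le 1-\alpha^{-2}$, each term is at most $(1-\alpha^{-2})\|f\|_\infty\|g\|_\infty$ in absolute value, so
\[
|A_N(f,g)(x)-A_{\alpha^n}(f,g)(x)|\le 2(1-\alpha^{-2})\|f\|_\infty\|g\|_\infty .
\]
Combine this with the a.e. Cauchy property of $A_{\alpha^n}(f,g)(x)$. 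This gives $\limsup_{N,N'\to\infty}|A_N(f,g)(x)-A_{N'}(f,g)(x)|\le 4(1-\alpha^{-2})\|f\|_\infty\|g\|_\infty$ a.e. Letting $\alpha\downarrow 1$ along a countable sequence yields a.e. convergence, using the hypothesis only for the given pair. In the application you may take $h\in L^\infty$ in $g=U_1^{\delta}h-h$ by density, so the boundedness of $g$ is available there.
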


\begin{lm}\label{t15}
    Let $U_i^\delta(f)(x)=f(T_i^\delta x)$ for $\delta\in\bbr$ and $i=1,\,2$ and
    \begin{align*}
        &A=\bigcap_{\delta\in(0,1]}\big(Ker(U_1^{\delta}-I)\cap Ker(U_2^{\delta}-I)\big),\\
        &B=span(\bigcup_{\delta\in(0,1]}\big(\big(Im(U_1^{\delta}-I)\cup Im(U_2^{\delta}-I)\big)\big).
    \end{align*}
    Here $I$ represents the identity mapping. Then $A+B$ is dense in $L^2(X)$.

\end{lm}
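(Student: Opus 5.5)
The statement to prove is Lemma~\ref{t15}: $A+B$ is dense in $L^2(X)$. The plan is to show that $(A+B)^\perp=\{0\}$ by identifying $B^\perp$ with $A$. Then $A\perp B$ and $A\oplus\overline{B}=L^2(X)$.

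Each $U_i^\delta$ is induced by a measure-preserving invertible map, so it is a unitary operator on $L^2(X)$. Its adjoint is $U_i^{-\delta}=(U_i^\delta)^{-1}$. For a unitary $U$ we have the standard identity
$$\big(\mathrm{Im}(U-I)\big)^\perp=\mathrm{Ker}(U^*-I)=\mathrm{Ker}(U^{-1}-I)=\mathrm{Ker}(U-I).$$
The first equality is the general $(\mathrm{Im}\,S)^\perp=\mathrm{Ker}\,S^*$. The last holds because $U^{-1}h=h$ if and only if $h=Uh$.

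Now take $h\perp B$. Then $h$ is orthogonal to $\mathrm{Im}(U_i^\delta-I)$ for every $\delta\in(0,1]$ and $i=1,2$, since $B$ is the span of these images. By the identity above, $h\in\mathrm{Ker}(U_1^\delta-I)\cap\mathrm{Ker}(U_2^\delta-I)$ for every such $\delta$, that is, $h\in A$. Conversely, every element of $A$ is orthogonal to every generator of $B$ by the same identity, so $A\subseteq B^\perp$. Hence $B^\perp=A$, and $L^2(X)=\overline{B}\oplus B^\perp=\overline{B}\oplus A$.

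It follows that $A+B$ is dense: any $f\in L^2(X)$ splits as $f=a+b$ with $a\in A$ and $b\in\overline{B}$, and approximating $b$ by elements of $B$ gives approximants of $f$ in $A+B$. Equivalently, if $h\perp A+B$, then $h\perp B$ gives $h\in A$, and $h\perp A$ then gives $h=0$.

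The only step needing any care is the adjoint computation. It rests on $U_i^\delta$ being unitary, which holds because $T_i^\delta$ preserves $\mu$ and is invertible with inverse $T_i^{-\delta}$ (the $T_i$ being flows). The measurability assumption \eqref{e84} is not needed for this lemma; it is used for the later steps of the ergodic argument, not here.
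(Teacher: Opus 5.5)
Your proof is correct and is the standard argument: $U_i^\delta$ is unitary, so $(\mathrm{Im}(U_i^\delta-I))^\perp=\mathrm{Ker}((U_i^\delta)^*-I)=\mathrm{Ker}(U_i^\delta-I)$, which gives $B^\perp=A$ and hence $L^2(X)=\overline{B}\oplus A$. The paper gives no proof of its own here; it calls the lemma standard and points to the literature, where this same orthogonal-complement argument is used.
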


We need also the boundedness of bilinear maximal ergodic operators.
\begin{prop}\label{t40}
    Let $\mathcal{P}(t)=\big(\mathcal{P}_1(t),\,\mathcal{P}_2(t),\,\mathcal{P}_3(t),\,\mathcal{P}_4(t)\big)$, where $\mathcal{P}_i(t)$ is given polynomial,  $i=1,\,2,\,3,\,4$. Let
     $$
    \mathcal A^\mathcal{P}_N(f,\,g)(x)=\frac{1}{N^2}\int_{[0,N]^2}f(T_1^{\mathcal{P}_1(t)}T_2^{\mathcal{P}_2(t)}x)g(T_1^{\mathcal{P}_3(t)}T_2^{\mathcal{P}_4(t)}x)dt.
    $$
     For $p_1,p_2\in[1,\infty]$, and $\tf1p=\tf1{p_1}+\tf1{p_2}<1$, we have that  
     $$
    \|\sup\limits_{N}|\mathcal A^{\mathcal{P}}_N(f,\,g)|\|_{L^{p}(X)}\lesssim_{\mathcal{P},p_1,p_2}\|f\|_{L^{p_1}(X)}\|g\|_{L^{p_2}(X)}.
    $$
\end{prop}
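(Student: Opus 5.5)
The plan is to reduce the bilinear maximal bound to a linear maximal bound by a pointwise domination. Set $F(x)=|f|$ and $G(x)=|g|$; since the maximal operator only gets larger, we may assume $f,g\ge0$. First we transfer the problem to $\bbr^2$ by the Calder\'on transference principle. The two commuting flows give a measure-preserving $\bbr^2$-action $(s_1,s_2)\mapsto T_1^{s_1}T_2^{s_2}$, and the joint measurability \eqref{e84} justifies Fubini. It therefore suffices to prove the analogous bound on $\bbr^2$ for
$$\mathcal M(F,G)(y)=\sup_N \frac1{N^2}\int_{[0,N]^2}F\big(y+(\mathcal P_1,\mathcal P_2)(t)\big)\,G\big(y+(\mathcal P_3,\mathcal P_4)(t)\big)\,dt .$$
The sup may be restricted to $N\le M$. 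We then apply the usual truncation: replace $F,G$ by $F(T x)$ and $G(Tx)$ restricted to $|s|\le L+CM^{d}$, average over $|s|\le L$, and let $L\to\infty$. This gives the ergodic bound with the same constant.

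On $\bbr^2$ we apply H\"older in $t$ with exponents $p_1/p$ and $p_2/p$:
$$\frac1{N^2}\int_{[0,N]^2}F(\cdot)G(\cdot)\,dt\le \Big(\frac1{N^2}\int F(y+\mathcal Q_1(t))^{p_1/p}dt\Big)^{p/p_1}\Big(\frac1{N^2}\int G(y+\mathcal Q_2(t))^{p_2/p}dt\Big)^{p/p_2}.$$
Here $\mathcal Q_1=(\mathcal P_1,\mathcal P_2)$ and $\mathcal Q_2=(\mathcal P_3,\mathcal P_4)$. Taking the sup in $N$ on each factor separately bounds $\mathcal M(F,G)$ by $(M_{\mathcal Q_1}F^{p_1/p})^{p/p_1}(M_{\mathcal Q_2}G^{p_2/p})^{p/p_2}$. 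Here $M_{\mathcal Q}h(y)=\sup_N N^{-2}\int_{[0,N]^2}|h(y+\mathcal Q(t))|dt$ is the linear polynomial maximal function.

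Taking $L^p$ norms and applying H\"older once more, we need $M_{\mathcal Q_i}$ bounded on $L^{p}$. This is because $\|(M h^{p_1/p})^{p/p_1}\|_{p_1}=\|M h^{p_1/p}\|_{p}^{p/p_1}$ and $\|h^{p_1/p}\|_p^{p/p_1}=\|h\|_{p_1}$. When $p_1=\infty$ (or $p_2=\infty$) the corresponding factor is simply bounded by $\|f\|_\infty$. The hypothesis $p>1$ is exactly what makes the linear bound available.

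The remaining step, and the main obstacle, is the $L^p(\bbr^2)$ bound for $p>1$ of the two-parameter polynomial average $M_{\mathcal Q}$. Its measure is the pushforward of normalized Lebesgue measure on $[0,N]^2$ under $\mathcal Q$, which may be degenerate, for example constant, or supported on a curve, or in general position. We would handle this in two steps.
\begin{enumerate}
\item We use Fubini in one variable: for fixed $t_2$, the map $t_1\mapsto\mathcal Q(t_1,t_2)$ is a polynomial curve in $\bbr^2$ with coefficients depending on $t_2$.
\item We dominate $M_{\mathcal Q}h$ by $\sup_{N}\sup_{t_2}$ of one-parameter polynomial-curve averages.
\end{enumerate}
The known uniform bounds for maximal averages along polynomial curves, with constants depending only on the degree, together with the uniformity in the coefficients, handle the inner supremum. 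However, a sup over $t_2$ outside the average is too lossy, so we would instead average in $t_2$ first. Namely, $M_{\mathcal Q}h\le \sup_N N^{-1}\int_0^N \mathcal M_{\rm curve}h\,dt_2$ with a coefficient-uniform curve maximal function $\mathcal M_{\rm curve}h(y)=\sup_{\text{polynomial curves }\gamma \text{ of degree}\le d}\sup_N\frac1N\int_0^N|h(y+\gamma(s))|ds$. Such a supremum over all curves is not bounded in general. We therefore expect instead to cite the standard result that maximal averages over polynomial images of cubes are $L^p$ bounded for $p>1$, with constants depending only on the degrees; this follows via the lifting trick to a nilpotent group or from Stein's theory. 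We note that the constant vanishes when $\mathcal Q$ is degenerate.
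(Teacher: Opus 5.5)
Your argument is correct and is essentially the paper's proof. You apply H\"older in $t$ with the conjugate exponents $p_1/p$ and $p_2/p$, then H\"older in $x$, which reduces everything to $L^p$ boundedness ($p>1$) of the two linear polynomial maximal averages. The only difference is how you obtain that linear bound: you go through Calder\'on transference and Stein's Euclidean polynomial maximal theorem, whose constants depend only on the degree, so degenerate $\mathcal Q$ cause no trouble. The paper instead quotes Theorem~1.5 of \cite{k1} directly on $X$, so your exploratory detour through one-variable curve averages can be dropped.
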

\begin{proof}
It suffices to consider the case $f,g\ge 0$.
    Let
   \begin{align*}
        &\mathcal A'_N(f)(x)=\frac{1}{N^2}\int_{[0,N]^2}f(T_1^{\mathcal  P_1(t)}T_2^{\mathcal  P_2(t)}x)dt,\\
        &\mathcal A^{''}_N(f)(x)=\frac{1}{N^2}\int_{[0,N]^2}f(T_1^{\mathcal  P_3(t)}T_2^{\mathcal  P_4(t)}x)dt.
    \end{align*}
     By  Theorem 1.5 in \cite{k1}, we have that for $q\in(1,\infty]$,
     \begin{align*}
         &\|\sup_{N\in\bbr^+}\mathcal A^{'}_N(f)\|_q\lesssim_{q}\|f\|_q,\\
         &\|\sup_{N\in\bbr^+}\mathcal A^{''}_N(f)\|_q\lesssim_{q}\|f\|_q.
     \end{align*}
We remark that here we can take $q=\infty$ as $\mu$ is a probability measure.
     
     Take $q_1=p_1/p$ and $q_2=p_2/p$. Then $1=\frac{1}{q_1}+\frac{1}{q_2}$. Combining this with the H\"older inequality, we have
    \begin{align*}
        &\|\sup\limits_{N\in\bbr^+}|\mathcal A_N(f,\,g)|\|_{L^{p}(X)}^p\\
        =&\int_X\sup\limits_{N\in\bbr^+}|\frac{1}{N^2}\int_{[0,N]^2}f(T_1^{\mathcal  P_1(t)}T_2^{\mathcal  P_2(t)}x)g(T_1^{\mathcal  P_3(t)}T_2^{\mathcal  P_4(t)}x)dt|^pdx\\
        \lesssim&\int_X\sup\limits_{N\in\bbr^+}|\frac{1}{N^2}\int_{[0,N]^2}[f(T_1^{\mathcal  P_1(t)}T_2^{\mathcal  P_2(t)}x)]^{q_1}dt|^{\frac{p}{q_1}}\\
        &\sup\limits_{N\in\bbr^+}|\frac{1}{N^2}\int_{[0,N]^2}[g(T_1^{\mathcal  P_3(t)}T_2^{\mathcal  P_4(t)}x)]^{q_2} dt|^{\frac{p}{q_2}}dx\\
        \lesssim&\|\sup\limits_{N\in\bbr^+}\mathcal A^{'}_N(f^{q_1})\|_p^{\frac{p}{q_1}}\|\sup\limits_{N\in\bbr^+}\mathcal A^{''}_N(g^{q_2})\|_p^{\frac{p}{q_2}}\\
        \lesssim_p&\|f\|_{p_1}^p\|g\|_{p_2}^p,
    \end{align*}
    which finishes the proof.
\end{proof}

We can obtain a similar result in the discrete setting through Theorem~1.20 in \cite{Mirek2022}.
\begin{prop}
    Let $\mathcal{P}(t)=\big(\mathcal{P}_1(t),\,\mathcal{P}_2(t),\,\mathcal{P}_3(t),\,\mathcal{P}_4(t)\big)$, where $\mathcal{P}_i(t)$ is a given polynomial for any $i=1,\,2,\,3,\,4$ that satisfies $\mathcal{P}_i(0)=0$. Let $(X;\,B(X),\,\mu)$ be a $\sigma$-finite measure space endowed with a family $T_1,\, T_2$ of commuting invertible measure-preserving transformations on $X$. Let
     $$
    \mathcal A^\mathcal{P}_N(f,\,g)(x)=\frac{1}{N^2}\sum_{t_1=0}^{N-1}\sum_{t_2=0}^{N-1}f(T_1^{\mathcal{P}_1(t)}T_2^{\mathcal{P}_2(t)}x)g(T_1^{\mathcal{P}_3(t)}T_2^{\mathcal{P}_4(t)}x).
    $$
     For $p_1,p_2\in(1,\infty)$, and $\tf1p=\tf1{p_1}+\tf1{p_2}<1$, we have that  
     $$
    \|\sup\limits_{N\in\bbz^+}|\mathcal A^{\mathcal{P}}_N(f,\,g)|\|_{L^{p}(X)}\lesssim_{\mathcal{P},p_1,p_2}\|f\|_{L^{p_1}(X)}\|g\|_{L^{p_2}(X)}.
    $$
\end{prop}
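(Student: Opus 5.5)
The plan is to copy the proof of Proposition~\ref{t40}, replacing the continuous single-function maximal theorem (Theorem 1.5 in \cite{k1}) with its discrete analogue, Theorem~1.20 in \cite{Mirek2022}. As before, we may assume $f,g\ge 0$, since $|\mathcal A^{\mathcal P}_N(f,g)|\le \mathcal A^{\mathcal P}_N(|f|,|g|)$.

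First we introduce the two linear discrete averages
\[
\mathcal A'_N(f)(x)=\frac{1}{N^2}\sum_{t_1=0}^{N-1}\sum_{t_2=0}^{N-1}f(T_1^{\mathcal P_1(t)}T_2^{\mathcal P_2(t)}x),\qquad
\mathcal A''_N(g)(x)=\frac{1}{N^2}\sum_{t_1=0}^{N-1}\sum_{t_2=0}^{N-1}g(T_1^{\mathcal P_3(t)}T_2^{\mathcal P_4(t)}x).
\]
These are polynomial ergodic averages over the square $[0,N)^2\cap\bbz^2$, driven by the commuting invertible transformations $T_1,T_2$ on a $\sigma$-finite space, and the polynomials vanish at the origin. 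This is exactly the setting of Theorem~1.20 in \cite{Mirek2022}, which gives
\[
\|\sup_{N\in\bbz^+}\mathcal A'_N(f)\|_{q}\lesssim_{q,\mathcal P}\|f\|_q ,
\qquad
\|\sup_{N\in\bbz^+}\mathcal A''_N(g)\|_{q}\lesssim_{q,\mathcal P}\|g\|_q
\]
for every $q\in(1,\infty)$.

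We then set $q_1=p_1/p$ and $q_2=p_2/p$, so that $1/q_1+1/q_2=1$ and $q_1,q_2\in(1,\infty)$; the strict hypotheses $p_1,p_2<\infty$ and $1/p<1$ are what guarantee this. Applying H\"older's inequality to the normalized counting measure on $[0,N)^2\cap\bbz^2$ gives the pointwise bound
\[
\mathcal A^{\mathcal P}_N(f,g)(x)\le \big(\mathcal A'_N(f^{q_1})(x)\big)^{1/q_1}\big(\mathcal A''_N(g^{q_2})(x)\big)^{1/q_2}.
\]
Taking the supremum in $N$ and then the $L^p(X)$ norm, H\"older's inequality on $X$ with exponents $q_1,q_2$ yields
\[
\|\sup_N \mathcal A^{\mathcal P}_N(f,g)\|_p^p\le \|\sup_N\mathcal A'_N(f^{q_1})\|_{p}^{p/q_1}\,\|\sup_N\mathcal A''_N(g^{q_2})\|_{p}^{p/q_2}.
\]
Applying the linear maximal bounds with exponent $p$ then gives $\|f^{q_1}\|_p^{p/q_1}=\|f\|_{p_1}^{p}$, and the same for $g$. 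This concludes the argument, provided $p>1$ so that the maximal bound applies at exponent $p$.

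The main obstacle is the range of $p$. The hypothesis $1/p<1$ gives $p>1$, so the case $p\le 1$, where a linear maximal theorem would fail, cannot occur. Unlike the continuous case, we cannot invoke $q=\infty$ here and must stay inside $(1,\infty)$, which again holds automatically. What remains to be checked is that Theorem~1.20 in \cite{Mirek2022} covers maximal functions over square averages in two variables with vector polynomial exponents in $T_1,T_2$, including the degenerate case where some $\mathcal P_i$ vanish identically.
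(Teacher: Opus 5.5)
Your proposal is correct and follows the paper's route: the paper gives no separate proof and only says the discrete case follows ``through Theorem~1.20 in \cite{Mirek2022}''. That means rerunning the H\"older argument of Proposition~\ref{t40} with $q_1=p_1/p$, $q_2=p_2/p$, using the linear discrete maximal bound at exponent $p>1$ in place of Theorem~1.5 of \cite{k1}, which is what you do. Your remaining caveat, that the cited theorem must cover square averages with these polynomial exponents (including the trivial degenerate cases), is an assumption the paper relies on equally, not a gap in your argument.
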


\begin{proof}[Proof of Theorem~\ref{t8}]
We will actually show that 
\begin{equation}\label{edouble}
\lim_{N\to\infty}A_N(f,\,g)(x)\quad \text{exists a.e. }
\end{equation}
when $f\in L^\infty$ and $g\in L^2$, which obviously implies the conclusion as $L^\nf(X)\sset L^2(X)$.

By taking 
$$
    \mathcal  P_1(t)=t_1,\quad \mathcal  P_2(t)=t_2,\quad \mathcal  P_3(t)=P_1(t),\quad \mathcal  P_4(t)=P_2(t).
    $$
     in Proposition~\ref{t40}, it suffices to verify  \eqref{edouble} when $f\in L^\nf(X)$ and $g\in A+B$ via a standard argument (using, for instance, a modification of \cite[Theorem 2.1.14]{Grafakos2014b}), as $A+B$ is dense in $L^2(X)$ by Lemma~\ref{t15}. By the linearity of $A_N(f,\,g)$, we only need to check the cases $g\in A$ and $g\in B$.

When $g\in A$, by the definition of $A$, the set 
$$
\{(x,t)\in X\times (\bbr_+)^2:\ U_1^{P_1(t)}U_2^{P_2(t)}g(x)\neq g(x)\}
$$
has measure $0$, which implies that, for $\mu$-a.e. $x$, we have
 $$
 A_N(f,g)(x)=\mathcal A'_N(f)(x)g(x)\quad\quad\forall N>0,
 $$  
 where we take $P_1(t)=t_1$ and $P_2(t)=t_2$ in the definition of $\mathcal A'_N$. By Theorem 1.5 in \cite{k1}, for any $f\in L^{\infty}$, the limit $\lim\limits_{N\to+\infty}\mathcal A'_N(f)(x)$ exists $\mu$-almost everywhere in $X$. Therefore \eqref{edouble} holds.

When $g\in B$, due to linearity, we only need to consider the case $g\in Im(T_1^{\delta}-I) $ or $g\in Im(T_2^{\delta}-I) $ where $\delta\in[0,1)$. Without loss of generality, we assume $g\in Im(T_1^{\delta}-I) $, i.e., there exist $h\in L^2(X)$ and $\delta\in [0,1)$, such that for any $t\in\bbr$, $x\in X$, we have
\begin{equation}\label{e85}
    g(T_1^tx)=h(T_1^{t+\delta}x)-h(T_1^tx).
\end{equation}
Let 
\begin{align*}
    &\widetilde{A_N}(f,\,h)(x)\\
    =&\frac{1}{N^2}\int_{[0,N]^2}f(T_1^{t_1}T_2^{t_2}x)\big(h(T_1^{P_1(t)+\delta}T_2^{P_2(t)}x)-h(T_1^{P_1(t)}T_2^{P_2(t)}x)\big)dt_1dt_2,
\end{align*}
for which we have the following result.

\begin{prop}\label{t20}
    For any $\delta>0$, there exists $\gamma=\gamma(\de)>0$,   such that
    \begin{equation}\label{e86}
        \|\widetilde{A_N}(f,\,h)\|_{1}\lesssim N^{-\gamma}\|f\|_2\|h\|_2
    \end{equation}
    holds.
\end{prop}
We take this result for granted momentarily, whose proof will be given  later.

Because of \eqref{e85}, we have $A_N(f,\,g)=\widetilde{A_N}(f,\,h)$. 
Then, for any $\alpha>1$, we have
$$
\int_X\sum_{n=0}^{\infty}|A_{\alpha^n}(f,\,g)(x)|d\mu\lesssim\sum_{n=0}^{\infty}\alpha^{-n\gamma}\|h\|_2\|f\|_2<\infty,
$$
which implies that $\sum_{n=0}^{\infty}|A_{\alpha^n}(f,\,g)(x)|<\infty$ for $\mu$-almost everywhere $x$. 
Then we have 
$$
\lim\limits_{n\to+\infty}A_{\alpha^n}(f,\,g)(x)=0
$$
for $\mu$-almost everywhere $x$. Then by Lemma~\ref{t14}, we finish the proof.
\end{proof}

\begin{proof}[Proof of Proposition~\ref{t20}]
For $x\in\bbr^2$, let
\begin{align*}
    &C_N(F,G)(x)\\
=&\frac{1}{N^2}\int_{[0,N]^2}F(x+t)\big(G(x_1+P_1(t)+\delta,\,x_2+P_2(t))-G(x_1+P_1(t),\,x_2+P_2(t))\big)dt.
\end{align*}
By Calder\'on's transference principle \cite{C68}, the study of \eqref{e86} is reduced to 
prove
    \begin{equation}\label{e87}
        \|C_N(F,\,G)\|_{L^1([0,N^2]\times[0,N^3])}\lesssim N^{-\gamma}\|F\|_2\|G\|_2
    \end{equation}
        uniformly in $\de\in(0,1]$.

Let $\Delta_\delta G(x_1,\,x_2)=G(x_1+\delta,\,x_2)-G(x_1,\,x_2)$, $P(t)=(P_1(t),\,P_2(t))$ and
$$
    \widetilde T(f_1, f_2)(x)=\int_{\zozo}f_1\big(x+B_Nt\big)f_2(x+P(t))dt.
    $$

    Let $f_1(x_1,\,x_2)=N^{5/2}F\big(N^2x_1,\,N^3x_2\big)$, $ f_2(x_1,\,x_2)=N^{5/2}G\big(N^2x_1,\,N^3x_2\big)$. By the change of variables, 
    \eqref{e87} follows from the estimate
    \begin{equation}\label{e89}
        \|\widetilde T(f_1,\Delta_{\delta}f_2)\|_{L^1([0,1]^2)}\lesssim\delta^{\gamma'}\|f_1\|_2\|f_2\|_2.
    \end{equation}
    Let $f_2^0=(\chi_{[0,\delta^{-1/3}]}(|\cdot|)\wh f_2)^{\vee}$ and $f_2^j=(\chi_{[2^{j-1}\delta^{-1/3},2^j\delta^{-1/3}]}(|\cdot|)\wh f_2)^{\vee}$ for $j\in\bbz^+$. Then 
    $
    f_2=\sum_{j=0}^{+\infty}f_2^j.
    $
  Applying Theorem~\ref{t27} with $l=1$,  we can find that
    $$
    \|\widetilde T(f_1,\Delta_\delta f_2^j)\|_{L^{1}(\zozo)}\lesssim2^{-\gamma j}\delta^{\gamma/3}\|f_1\|_{L^2(\bbr^2)}\|f_2\|_{L^2(\bbr^2)}.
    $$
    By the Bernstein inequality, we get
    $$
    \|\Delta_\delta f_2^0\|_\infty\lesssim\delta^{2/3}\|f_2^0\|_\infty\lesssim\delta^{1/3}\|f_2^0\|_2\lesssim\delta^{1/3}\|f_2\|_2,
    $$
    which implies
    \begin{align*}
         &\|\widetilde T(f_1,\Delta_\delta f_2^0)\|_{L^{1}(\zozo)}
         \lesssim\delta^{1/3}\|f_1\|_{L^2(\bbr^2)}\|f_2\|_{L^2(\bbr^2)}.
    \end{align*}
    Summing over $j$, we  get  \eqref{e89}  with $\gamma'=\gamma/3$.
    
\end{proof}

\section{Main reductions}\label{s3}

We start this section by deriving Theorem~\ref{t27} from Theorem~\ref{t24}.

\begin{proof}[Proof of Theorem~\ref{t27}]
    Let $\tilde r_B(s)\in C^\infty(\bbr)$ be supported in $[B,\, +\infty)$ and satisfy $\tilde r_B(s)=1$ when $s\in[2B,+\infty)$ and  $\|\partial^\alpha \tilde r_B\|_\infty\le B^{-|\alpha|}\ \ \forall \alpha\in \bbn$. Let 
\begin{align*}
    r(t)=&\tilde r_B(|t_1|^2)\tilde r_B(|t_2|^2)\tilde r_{2^{-2l}B}(2^{-2l}-|t_1|^2)\tilde r_{2^{-2l}B}(2^{-2l}-|t_2|^2)\\
    &\tilde r_B(|6t_1t_2^2-6t_1^2t_2-\frac{3t_2^2}{R}-2\frac{t_1}{R^2}+\frac{1}{R^3}|^2)\tilde r_B(|t_1-t_2|^2)\in C^\nf(\bbr^2),
\end{align*}
which is designed to a smooth approximation of $\chi_{[0,2^{-l}]^2}$ satisfying Theorem~\ref{t24} with $B=\la^{-2\ep}$ and $A=2^{2l}\la^{2\ep}$. Here $\ep\le\frac{\gamma}{100K}$ with $K>0$ in Theorem~\ref{t24}.
Therefore,
\begin{align}
    &\|\int_{\bbr^2}f_1\big(x+B_Rt\big)f_2(x+P(t))r(t)dt\|_{L^{1}(\zozo)}\notag\\
    \lesssim &2^{2Kl}\la^{-\gamma/2}\|f_1\|_{L^2(\bbr^2)}\|f_2\|_{L^2(\bbr^2)}.\label{e92}
\end{align}

Next we show that the support of $\chi_{[0,2^{-l}]^2}(t)-r(t)$ is relatively small. Observe that $\chi_{[0,2^{-l}]^2}(t)-r(t)$ is supported in $\Omega=\cup_{i=1}^{4}\Omega_i$ with
\begin{align*}
    &\Omega_1=\{t\in\zozo:\,\min_{i\in\{1,2\}}|t_i|^2\le 2B\}\\
    &\Omega_2=\{t\in[0,2^{-l}]^2:\,\max_{i\in\{1,2\}}|t_i|^2\ge2^{-2l}-2^{-2l+1}B\}\\
    &\Omega_3=\{t\in\zozo:\,|t_1-t_2|^2\le 2B\}\\
    &\Omega_4=\{t\in\zozo:\,|6t_1t_2^2-6t_1^2t_2-\frac{3t_2^2}{R}-2\frac{t_1}{R^2}+\frac{1}{R^3}|^2\le2B\}.
\end{align*}
We claim that $|\Omega|\lesssim \lambda ^{-\ep/8}$.
Notice that $|\Omega_1|\lesssim\la^{-\ep/2}$, $|\Omega_2|\lesssim2^{-l/2}\la^{-\ep/2}$ and $|\Omega_3|\lesssim\la^{-\ep/2}$. For $|t_2|\ge\la^{-\ep/4}$, we have 
$$
\big|\{t_1:\,|6t_1t_2^2-6t_1^2t_2-\frac{3t_2^2}{R}-2\frac{t_1}{R^2}+\frac{1}{R^3}|\lesssim\la^{-\ep/2}\}\big|\lesssim\la^{-\ep/8},
$$
which implies $|\Omega_4|\lesssim\la^{-\ep/8}$. So we have that $|\Omega|\lesssim\la^{-\ep/8}$. 

By the H\"older inequality, we have
\begin{align*}
    &\|\int_{\zozo}f_1\big(x+B_Rt\big)f_2(x+P(t))( \chi_{[0,2^{-l}]^2}(t)-r(t))dt\|_{L^{1}(\zozo)}\\
    \le&\int_{\Omega}\int_{\zozo}|f_1\big(x+B_Rt\big)||f_2(x+P(t))|dxdt\\
    \le&\|f_1\|_{L^2(\bbr^2)}\|f_2\|_{L^2(\bbr^2)}|\Omega|\\
    \lesssim&\la^{-\ep/8}\|f_1\|_{L^2(\bbr^2)}\|f_2\|_{L^2(\bbr^2)}.
\end{align*}
Combining this with \eqref{e92} we can get \eqref{e95} with the new $\gamma=\ep/8$.
\end{proof}

We will focus on the proof of Theorem~\ref{t24} below.
In the proof of Theorem~\ref{t24}, we can assume $A\le \la^{\ep'}$, where $\ep$ is a small constant which will be determined later. When $A\ge\la^{\ep'}$, we take $K>\frac{\gamma}{\ep'}$, then we can have \eqref{e90} by the trivial estimate.
We show first that, to prove Theorem~\ref{t24}, it suffices to verify the following result.

\begin{thm}\label{t25}
For $T$ defined in \eqref{e087}, there exists $\sigma>0,\, K>0$ such that
\begin{equation}
	\|T(f_1,\, f_2)\|_{L^{1}(\zozo)}\lesssim A^K\lambda^{-\sigma }\|f_1\|_{L^\infty}\|f_2\|_{L^\infty}\, \label{e5}
\end{equation}
for any $f_1$ and $f_2$ satisfying
$\spt \wh f_2\subset B(0,2\la)\setminus B(0,\la)$
and 
\begin{equation}\label{e004}
    \spt \wh f_1\sset Q_{R,\lambda} :=(-20R\la,\, 20R\lambda)\times(-20R^2\la,\,20R^2\la).
\end{equation}
\end{thm}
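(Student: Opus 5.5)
We will prove Theorem~\ref{t25} in the way \cite{Christ2020c} proves its analogous estimate: write $T$ as a structured piece plus a remainder. The paper itself indicates the split into $T_\flat$ and $T_\sharp$.

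\emph{Step 1: reduction.} Fix a small $\rho>0$ and introduce a scale $\la^{-1+\rho}$. Decompose $f_1$ and $f_2$ into pieces whose Fourier transforms are localized to caps of appropriate size. Here we use that $\wh f_1\subset Q_{R,\la}$, so after the anisotropic rescaling by $B_R$ its frequencies are effectively at scale $\la$, like those of $f_2$. We may also restrict $x$ to $\zozo$ and $t$ to $\spt r$. There the phase map $t\mapsto (B_Rt,P(t))$ has the nondegenerate Jacobian guaranteed by \eqref{e91}, so locally we may change variables $s=(B_Rt,P(t))$, at a cost of powers of $A$.

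\emph{Step 2: the dichotomy $T=T_\flat+T_\sharp$.} For each cube $Q$ of side $\la^{-1+\rho}$ in $x$-space, ask whether the restrictions of $f_1$ and $f_2$ to $Q$ (after the Step 1 decomposition) correlate with a function that is constant along the relevant curves.

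$T_\flat$ collects the contributions where $f_1$ or $f_2$ is locally "flat", i.e.\ nearly a modulated function in the direction dictated by the phase. For this part we prove a decay bound using the oscillation $|\xi|\sim\la$ of $\wh f_2$ together with stationary phase or van der Corput in $t$. The determinant condition \eqref{e91} and the separations $|t_1-t_2|>A^{-1}$ and $|t_i|>A^{-1}$ give a nonvanishing Hessian, which yields a gain $\la^{-\sigma}$ with polynomial losses in $A$.

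$T_\sharp$ is the remainder. There we apply Cauchy--Schwarz in $x$ and $t$ to obtain a $TT^*$-type expression. This reduces the bound to controlling the measure of a set of parameters $(t,t')$ on which the phases $\xi\cdot B_R(t-t')+\eta\cdot(P(t)-P(t'))$ nearly coincide to first order.

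\emph{Step 3: the sublevel set estimate (main obstacle).} We need a bound of the form
\[
\bigl|\{t\in \spt r:\ |\Phi(t)|\le \de\}\bigr|\lesssim A^{K}\de^{c}
\]
for the polynomial $\Phi$ that arises (a combination of derivatives of $(B_Rt,P(t))$ paired with frequencies), uniformly in $R\ge1$ and in the frequency parameters. The plan is as follows. Use \eqref{e91} to show that $\Phi$ and its partial derivatives cannot all be small simultaneously; the vectors $(2t_1-1/R,3t_1^2)$ and $(2t_2,3t_2^2-1/R^2)$ are independent. Then apply a one-variable polynomial sublevel bound of the form $|\{s:|p(s)|\le\de\}|\lesssim(\de/\|p\|)^{1/\deg p}$ on each slice $t_2=\text{const}$, and integrate over the slices.

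The delicate point is uniformity in $R$, since the "corner" reduction to $R=1$ is unavailable. I would handle this by splitting into the regimes $R\le A^{C}$ and $R\ge A^{C}$. In the second regime the $1/R$ terms are perturbative and $6t_1t_2(t_2-t_1)$ dominates, so the separation hypotheses control everything. Finally, combine the $T_\flat$ and $T_\sharp$ bounds, choose $\rho$ and $\sigma$ small, and absorb the bound $A\le\la^{\ep'}$ into $A^K$ to obtain \eqref{e5}.
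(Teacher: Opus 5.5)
Your outline, a structured/uniform splitting as in Lemma~\ref{t18} with $TT^*$ for one part, resembles the paper's, though its labels are the reverse of yours: there $f_\sharp$ is the structured sum of modulated bumps and $f_\flat$ is the uniform part. However, the step meant to give decay on the structured part fails.

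First, \eqref{e91} is the Jacobian of $t\mapsto P(t)-B_Rt$, not the Hessian of any phase. For $\alpha_1\cdot B_Rt+\alpha_2\cdot P(t)$ the Hessian is $\mathrm{diag}(2\alpha_{2,1}+6\alpha_{2,2}t_1,\,2\alpha_{2,1}+6\alpha_{2,2}t_2)$, which vanishes on the lines $t_j=-\alpha_{2,1}/(3\alpha_{2,2})$.

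More fundamentally, the modulation frequencies change from cell to cell. So for fixed $x$ the $t$-integral is a patchwork of local oscillatory integrals over $t$-boxes of side $\lesssim A\la^{-\gamma}$ (side $\sim A\la^{-1+\rho}$ in your scaling), with amplitudes varying at that scale. On such a box the nonlinear part of the phase changes by $O(A^2\la^{1-2\gamma})\ll 1$. Hence a cell on which the phase is nearly stationary gives no cancellation at all, whatever the curvature. When only one of $f_1,f_2$ is structured, as your $T_\flat$ allows, there is not even a phase in $t$ to exploit.

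In the paper only the structured--structured term $T(f_{1,\sharp},f_{2,\sharp})$ is treated by oscillation. Integration by parts disposes of the non-stationary cells $m\notin I$, and the decay comes entirely from showing that stationary cells are rare, $\#I\lesssim R^3\la^{4\gamma-\tilde\de}$ (Theorem~\ref{t100}). Every term containing a uniform piece is handled by $TT^*$.

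Your Step~3 does not supply this counting bound. A sublevel estimate for a single polynomial $\Phi$, uniform in the frequency parameters, cannot show that few cells are stationary: for any individual $m=(m_1,m_2)$ the frequencies can be chosen to make that cell stationary. What must be exploited is that $\alpha_{1,m_1}$ depends only on $m_1$ and $\alpha_{2,m_2}$ only on $m_2$, while $\overline{t_m}$ is pinned down by \eqref{e081}.

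The paper proceeds as follows.
\begin{itemize}
\item Cauchy--Schwarz over $m_1$ passes to pairs $m=(m_1,m_2)$, $m'=(m_1,m_2')$. This eliminates $\alpha_{1,m_1}$ and yields \eqref{e37}--\eqref{e40}.
\item Pigeonholing on $\overline{t_{m,2}}=t_0$, $\overline{t_{m',2}}=t_0'$ produces a large set $E''$.
\item This contradicts the nondegeneracy of $(s,s')\mapsto\Psi(s,t_0)-\Psi(s',t_0')$. Its Jacobian $|s-s'|\,|{-3}R^{-1}(s+s')+6ss'|$ is $\gtrsim\la^{-8\tilde\de}$ once the degenerate pairs $D^1_{m_1}\cup D^2_{m_1}$ are discarded and $m_2,m_2'$ are taken far apart.
\end{itemize}
The one-variable quadratic sublevel bounds you describe do appear in the paper, but only in the auxiliary count \eqref{e076} for the uniform part.

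Finally, your $TT^*$ step never says what uniformity the remainder satisfies or how it enters. In the paper the gain there comes from Lemma~\ref{t18}(iii) on $|\eta|\lesssim\la^{\gamma+\kappa}$, together with \eqref{e076} for larger $|\eta|$. It does not come from a sublevel set estimate in $(t,t')$.
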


We prove Theorem~\ref{t24} by taking Theorem~\ref{t25} as granted momentarily.

\begin{proof}[Proof of Theorem~\ref{t24}]

We explain first why we can assume \eqref{e004}, i.e. $\spt \wh f_1\sset Q_{R,\lambda}$.

Let $f_{1,1}=(\hat{f_1}\chi_{Q_{R,\lambda}})^{\vee}$ and $f_{1,2}=(\hat{f_1}\chi_{\bbr^2\setminus Q_{R,\lambda}})^{\vee}$. 
Let $h$ be a function supported in $\zozo$ satisfying $\|h\|_{L^{\infty}}\le 1$. Notice that 
\begin{align}
	&\int_{\bbr^2}T(f_{1,2},\, f_2)(x)h(x)dx\nonumber\\
	=&\int_{\bbr^4}f_{1,2}\big(x+B_R t\big)f_2(x+P(t))r(t)h(x)dtdx\nonumber\\
	=&\int_{\bbr^4}\widehat{f_{1,2}}(\xi)\hat{f_2}(\eta)\hat{h}(-\xi-\eta) \int_{\bbr^2} r(t)e^{2\pi i\big(B_R t\cdot \xi+P(t)\cdot \eta\big)}dt d\xi d\eta.
    \label{e9}
\end{align}
For  $t_1,\, t_2\in [0,\,1]$, 
we have either $\big|\xi_1/R+2t_1\eta_1+3t_1^2\eta_2\big|\ge 10\lambda$  or $\big|\xi_2/R^2+2t_2\eta_1+3t_2^2\eta_2\big|\ge 10\lambda$ when $\xi\in\bbr^2\setminus Q_{R,\lambda}$. 
Applying the non-stationary phase, we have either $$\Big|\int_{\bbr}e^{2\pi i(\frac{t_1\xi_1}{R}+t_1^2\eta_1+t_1^3\eta_2)}r(t)dt_1\Big|\lesssim A^2\lambda^{-2}$$
or 
$$
\Big|\int_{\bbr}e^{2\pi i(\frac{t_2\xi_2}{R^2}+t_2^2\eta_1+t_2^3\eta_2)}r(t)dt_2\Big|\lesssim A^2\lambda^{-2}
$$ 
recalling that $\|\partial^\alpha r\|_\nf\le A^{|\alpha|}$.
Plugging these estimates into \eqref{e9}, we have 
\begin{align*}
	&\|T(f_{1,2},\, f_2)\|_{L^1(\zozo)}\\
    =&\sup_{h:\ \|h\|_{L^\nf([0,1]^2)}=1}\Big|\int_{\bbr^2}T(f_{1,2},\, f_2)(x)h(x)dx\Big|\\
    \lesssim  & A^2 \lambda^{-1}\|f_{1,2}\|_{L^2}\|f_2\|_{L^2},
\end{align*}
as $\|h\|_{L^2}\le \|h\|_{L^\nf}=1$.
In summary, it remains to verify \eqref{e90} with $f_1$ replaced by $f_{1,1}$, which means that we can assume \eqref{e004}.

Next we show why \eqref{e5} is sufficient.
By a change of variables, we have
\begin{align}
    &\|T(f_1,\, f_2)\|_{L^1}\nonumber\\
    =&\int_{\bbr^2}|\int_{\bbr^2}f_1(x)f_2(x+P(t)-B_R t)r(t)dt|dx\nonumber\\
    \le&\|f_1\|_{L^{\frac{3}{2}}(\bbr^2)}\Big\|\int_{\bbr^2}f_2(x+P(t)-B_R t)r(t)dt\Big\|_{L^3(\bbr^2)},\label{e6}
\end{align}
where $B_Rt=(\frac{t_1}{R},\frac{t_2}{R^2})$.
Let 
$$
L(f)(x)=\int_{\bbr^2}f(x+P(t)-B_R t)r(t)dt.
$$
Applying Minkowski's inequality, we have
\begin{equation}\label{e002}
   \|L(f)\|_{L^2(\bbr^2)}\lesssim \|f\|_{L^2(\bbr^2)}. 
\end{equation}

We take a change of variables with
$$
s=(s_1,s_2)=P(t)-B_R t.
$$
The corresponding Jacobian  is 
$$
|\f{\partial s}{\partial t}|=\big|6t_1t_2^2-6t_1^2t_2-\frac{3t_2^2}{R}-2\frac{t_1}{R^2}+\frac{1}{R^3}\big|,
$$
which is greater than $A^{-1}$ by assumption \eqref{e91}. As a result,
\begin{equation}
	|Lf(x)|\lesssim A\int_{\bbr^2}|f(x+s)|ds,,
\end{equation}
which implies that 
\begin{equation}\label{e003}
    \|L(f)\|_{L^\nf(\bbr^2)}\lesssim A \|f\|_{L^1(\bbr^2)}.
\end{equation}

We can interpolate between \eqref{e002} and \eqref{e003} to obtain that 
$$
\|L(f)\|_{L^3(\bbr^2)}\lesssim A^{1/3}\|f\|_{L^{3/2}(\bbr^2)}.
$$
As a result, we conclude from this and  \eqref{e6} to obtain 
$$
\|T(f_1,\, f_2)\|_{L^1}\lesssim A^{O(1)}\|f_1\|_{L^{3/2}}\|f_2\|_{L^{3/2}},
$$
which combined with \eqref{e5} implies  \eqref{e90} with the additional condition \eqref{e004} by interpolation.

\medskip

\end{proof}

To proceed, we need a decomposition from the frequency side, which was essentially proved in \cite[Lemma 3.2]{Christ2020c}.

\begin{lm}\label{t18}
    Let $\tilde N\ge 1$, $\rho\in(0,\,1)$ and $f\in L^2(\bbr^2)$. There exists a decomposition $f=f_\flat+f_\sharp$ satisfying the following properties.

    (i) One has 
    $$\|f_\sharp\|_{L^2}+\|f_\flat\|_{L^2}\lesssim\|f\|_{L^2}.$$

    (ii) The function $f_\sharp$ is defined by 
    $$
     f_\sharp(x)=\sum_{n=1}^{N_0}h_n(Rx_1,\,R^2x_2)e^{i\alpha_n\cdot x}
    $$
    where $h_n$ is a smooth function satisfying $\|\partial^\beta h_n\|_\infty\lesssim_{\beta}\tilde N^{|\beta|}\|f\|_\infty$ for $\beta\in\bbn^2$, $\|h_n\|_{L^2}\lesssim R^{3/2}\|f\|_{L^2}$ and $\spt \widehat{h_n}\sset [-\tilde N,\,\tilde N]^2$, $N_0\lesssim\rho^{-1}$, and  $\alpha_n\in\bbr^2$. Moreover,  $\alpha_n\in\spt \widehat{f}+[-10R\tilde N,10R\tilde N]\times[-10R^2\tilde N,10R^2\tilde N]$.

    (iii) $f_\flat$ satisfies the estimate
    $$
    \int_{\bbr^2}\int_{|(\xi_1/R,\,\xi_2/R^2)|\le \tilde N}|\widehat{D_sf_\flat}|^2(\xi)d\xi ds\lesssim\rho \|f\|^4_{L^2},
    $$
    where  
    $$
    D_{s}f(x)=f(x+s)\overline{f(x)}.
    $$
\end{lm}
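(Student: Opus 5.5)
The idea is to reduce Lemma~\ref{t18} to the case $R=1$ by anisotropic rescaling, and then run the energy-increment / spectral decomposition argument of \cite[Lemma 3.2]{Christ2020c}. Put $g(y)=f(y_1/R,\,y_2/R^2)$, so that $f(x)=g(Rx_1,R^2x_2)$. Then $\|g\|_{L^2}=R^{3/2}\|f\|_{L^2}$ and $\|g\|_\infty=\|f\|_\infty$. On the Fourier side we have $\wh g(\zeta)=R^{3}\wh f(R\zeta_1,R^2\zeta_2)$, so the region $|(\xi_1/R,\xi_2/R^2)|\le\tilde N$ for $f$ becomes the ball $|\zeta|\le\tilde N$ for $g$. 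Translations also transform consistently: $D_sf$ corresponds to $D_{s'}g$ with $s'=(Rs_1,R^2s_2)$, and the Jacobian factors cancel once the normalizations are matched. Because of this, it suffices to build the decomposition $g=g_\flat+g_\sharp$ with $g_\sharp(y)=\sum_n h_n(y)e^{i\beta_n\cdot y}$, and then set $\alpha_n=(R\beta_{n,1},R^2\beta_{n,2})$. This is where the bound $\|h_n\|_2\lesssim R^{3/2}\|f\|_2$ and the box $[-10R\tilde N,10R\tilde N]\times[-10R^2\tilde N,10R^2\tilde N]$ in the localization of $\alpha_n$ come from.

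For $R=1$ we proceed as follows. Fix a Schwartz $\phi$ with $\wh\phi$ supported in $[-\tilde N,\tilde N]^2$ and $\wh\phi\equiv1$ on $B(0,\tilde N/2)$ (or the corresponding scaled ball), and take $\psi=\phi*\tilde\phi$. The key identity is
\begin{equation*}
\int_{\bbr^2}\int_{|\zeta|\le\tilde N}|\wh{D_sg}|^2\,d\zeta\,ds\lesssim\int\!\!\int|\wh g(\xi)|^2|\wh g(\xi')|^2\,\Psi(\xi-\xi')\,d\xi\,d\xi'
=\langle \mathcal K \wh{|g|^2}\ldots\rangle ,
\end{equation*}
obtained by Plancherel in $s$ and $x$. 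Equivalently, the left side is controlled by $\sum$ over wave packets of the quantities $|\langle g, e^{i\beta\cdot}\phi(\cdot-x)\rangle|^2$, that is, by the quadratic form of the positive operator $\mathcal T g=\int\!\!\int \langle g,\phi_{x,\beta}\rangle\phi_{x,\beta}\,dx\,d\beta$ restricted to frequency windows of size $\tilde N$. Concretely, we will use the self-adjoint compact-type operator $S$ defined by $\wh{Sg}(\xi)=\int \wh g(\xi')\Psi(\xi-\xi')\ldots$. More simply, we follow Christ's route: take the operator $\mathcal L h=\sum$ of projections $g\mapsto \langle g,\wh g\text{-localized}\rangle$ given by the positive semidefinite Hermitian kernel $K(\xi,\xi')=\wh g(\xi)\overline{\wh g(\xi')}\Psi(\xi-\xi')$. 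This is trace class with trace $\lesssim\|g\|_2^2\cdot\Psi(0)$. By the spectral theorem, only $N_0\lesssim\rho^{-1}$ eigenvalues exceed $\rho\|g\|_2^2$.

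The decomposition itself: let $\Pi$ be the spectral projection of $K$ onto eigenvalues $>\rho\|g\|^2$, and define $\wh{g_\sharp}=\Pi\wh g$ and $g_\flat=g-g_\sharp$. Property (i) holds because $\Pi$ is an orthogonal projection. Property (iii) holds because the relevant $D_s$-energy of $g_\flat$ is bounded by $\|K|_{\mathrm{Ran}(I-\Pi)}\|\,\|g\|_2^2\le\rho\|g\|_2^4$. For (ii), each eigenfunction has the form $\wh g(\xi)\cdot(\text{something convolved with }\Psi)$. Following \cite{Christ2020c}, we discretize $\Psi$ at scale $\tilde N$ so that each eigenvector is, in physical space, a modulated function $e^{i\beta_n\cdot y}h_n(y)$ with $\wh h_n$ supported in $[-\tilde N,\tilde N]^2$ and $\beta_n$ within $10\tilde N$ of $\spt\wh g$. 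The derivative bounds $\|\partial^\beta h_n\|_\infty\lesssim\tilde N^{|\beta|}\|g\|_\infty$ then follow from Bernstein's inequality, once we know $\|h_n\|_\infty\lesssim\|g\|_\infty$. That last bound holds because $h_n$ is obtained from $g$ by a frequency cutoff with an $L^1$-bounded kernel, so we must take the pieces to be frequency localizations $g\mapsto (\wh g\chi_{\beta_n+[-\tilde N,\tilde N]^2})$ smoothed, rather than raw eigenvectors.

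The main obstacle is exactly this tension: the spectral construction controls (iii) well, but it does not directly give $L^\infty$ control of the $h_n$. The fix I would try is Christ's greedy selection. Iteratively choose frequencies $\beta_n$ at which $\int\!\!\int_{|\xi-\beta|\lesssim\tilde N}|\wh g|^2$-type energy (the $D_s$-energy at that scale) exceeds $\rho\|g\|_2^2$. By an almost-orthogonality and counting argument, at most $O(\rho^{-1})$ disjoint such windows exist. Set $g_\sharp=\sum_n(\wh g\,\wh\phi(\cdot-\beta_n))^\vee$, whose summands are bounded in $L^\infty$ by $\|\phi\|_1\|g\|_\infty$. Then verify (iii) for the remainder using the fact that every window around a point of $\spt\wh{g_\flat}$ carries energy below $\rho\|g\|^2$, so that $\int\!\!\int|\wh{D_sg_\flat}|^2\le\sup_\beta(\text{window energy})\cdot\|g\|_2^2$. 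Finally, undo the rescaling.
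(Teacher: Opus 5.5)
Your anisotropic rescaling to $R=1$ is correct, and it is the reduction the paper uses. The identity
\[
\int_{\bbr^2}\int_{|\zeta|\le\tilde N}|\wh{D_sg}(\zeta)|^2\,d\zeta\,ds=\int_{\bbr^2}|\wh g(\eta)|^2\Big(\int_{B(\eta,\tilde N)}|\wh g(\zeta)|^2\,d\zeta\Big)\,d\eta
\]
correctly reduces (iii) to showing $\int_{B(\eta,\tilde N)}|\wh{g_\flat}|^2\lesssim\rho\|g\|_2^2$ for every $\eta\in\spt\wh{g_\flat}$. The spectral paragraph should be dropped. Besides lacking $L^\infty$ control, it does not give (iii), because the quadratic form of a kernel built from $\wh g$, evaluated at $\wh{g_\flat}$, is not the quartic $D_s$-energy of $g_\flat$.

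The gap is in the greedy scheme you settle on. You use a disjoint family of heavy windows and one cutoff $\wh\phi(\cdot-\beta_n)$ per window, on the same scale $\tilde N$. With that rule, the ``fact'' that every $\tilde N$-window around a point of $\spt\wh{g_\flat}$ is light does not follow, for two reasons. Maximality of a disjoint family says nothing about windows that meet a selected window. And $\wh{g_\flat}=\wh g\,(1-\sum_n\wh\phi(\cdot-\beta_n))$ survives in the transition zone of each cutoff, right next to the heavy mass.

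Here is a concrete failure. Take windows of radius $\tilde N$ and $\wh g=\pi^{-1/2}\tilde N^{-1}\chi_{B(0,\tilde N)}$, and let the first window be centered at $0$. No window disjoint from it carries any energy, so the procedure stops. But $\wh\phi$ vanishes on $\partial[-\tilde N,\tilde N]^2$, which $B(0,\tilde N)$ touches. Hence $|1-\wh\phi|\ge\frac12$ on a cap of $B(0,\tilde N)$ of diameter $<\tilde N$ and area $\gtrsim\tilde N^2$. Plugging $g_\flat$ into the right side of the identity gives a quantity $\gtrsim 1$, independent of $\rho$ and $\tilde N$, so (iii) fails once $\rho$ is small.

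What is missing is a selection rule under which lying in $\spt\wh{g_\flat}$ certifies lightness. This requires separating the scale of the pieces from the scale of the heaviness test. The paper does this as follows.
\begin{itemize}
\item It fixes a lattice partition of unity $\{\varphi(\tilde N^{-1}\xi-n)\}_{n\in\bbz^2}$, with $0\le\varphi\le1$ supported in $[-1,1]^2$.
\item It declares $n$ heavy when $\int_{B(\tilde Nn,10\tilde N)}|\wh f|^2\ge\rho\|f\|_2^2$, and assigns heavy cells to $f_\sharp$ and light cells to $f_\flat$.
\item Then $|\wh{f_\flat}|\le|\wh f|$. Any $\eta\in\spt\wh{f_\flat}$ lies in the support of some light cell $n$, so $B(\eta,\tilde N)\subset B(\tilde Nn,10\tilde N)$ carries $\wh f$-energy less than $\rho\|f\|_2^2$. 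With the identity, this gives (iii).
\item Bounded overlap of the balls $B(\tilde Nn,10\tilde N)$ gives $O(\rho^{-1})$ heavy cells. Heaviness also forces $B(\tilde Nn,10\tilde N)$ to meet $\spt\wh f$, which gives the localization of $\alpha_n$.
\item Each heavy piece equals $e^{2\pi i\tilde Nn\cdot x}h_n(x)$, where $h_n$ is the convolution of $e^{-2\pi i\tilde Nn\cdot x}f(x)$ with the $L^1$-normalized kernel $\tilde N^2\check\varphi(\tilde N x)$. This yields the rest of (ii) exactly as in your $L^\infty$ and Bernstein argument.
\end{itemize}

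Your greedy idea can also be repaired. For instance, take a maximal $3\tilde N$-separated set of points $\beta$ with $\int_{B(\beta,\tilde N)}|\wh g|^2\ge\rho\|g\|_2^2$. Put into $g_\sharp$ every lattice cell whose center lies within $5\tilde N$ of one of them. Some such separation of scales is exactly what your argument lacks.
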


\begin{rmk}
    When $R = 1$, Lemma~\ref{t15} reduces to the two-dimensional version of \cite[Lemma 3.2]{Christ2020c}, which can be obtained directly by a similar argument. Roughly speaking, in the case $R=1$,  letting
       $$
    J=\{ n\in \bbz^2 :\,\int_{B(\tilde Nn,10\tilde N)}|\widehat{f}(\xi)|^2d\xi\ge\rho\|f\|_2^2\},
    $$
    we define
    \begin{align*}
        &\widehat{f_\sharp}(\xi)=\sum_{n\in J}\varphi(\tilde N^{-1}(\xi+n))\widehat{f}(\xi),\\
        &\widehat{f_\flat}(\xi)=\sum_{n\in \bbz^2\setminus J}\varphi(\tilde N^{-1}(\xi+n))\widehat{f}(\xi)
    \end{align*}
    for an appropriate smooth bump function $\varphi$.
    The case $R > 1$ follows from the case $R = 1$ by a simple change of variables $x_1\mapsto x_1/R$, $x_2\mapsto x_2/R^2$.
\end{rmk}

Because of the linearity of $T$, we can suppose that $\|f_1\|_{L^{\infty}}=\|f_2\|_{L^{\infty}}=1$. 

Let $\tilde\psi_1\in \mathscr{S}(\bbr^2)$ satisfy $\widehat{\tilde\psi_1}(\xi)=1, \forall\ |\xi|\le20 $ and 
$
\spt\widehat{\tilde\psi_1}\sset B(0,30).
$
Let $\tilde\psi_2\in \mathscr{S}(\bbr^2)$ satisfy 
$$
\widehat{\tilde\psi_2}(\xi)=1, \forall\ |\xi|\in[1,2] \text{ and }\spt \widehat{\tilde \psi_2}\sset B(0,3)\backslash B(0,1/2).
$$
We define $\psi_1(x)=\ \la^2R^3\tilde\psi_1(R\la x_1,\,R^2\la x_2)$ and $\psi_2(x)=\ \la^2\tilde\psi_2(\la x)$, whose $L^1$-norms are bounded uniformly in $\la$ and $R$.

By the Fourier support assumptions on $f_j$, we see that $f_j=f_j\ast\psi_j$. Let $\eta\in C^\infty_0(\bbr^2)$ be supported in $[-1,\,1]^2$, satisfying $\sum_{n\in\bbz^2}\eta(x+n)=1$ for any $x\in\bbr^2$. For $m\in\bbz^2$, let $\eta_{1,m}(x)=\eta\big((R\la^\gamma x_1,\,R^2\la^\gamma x_2\big)-m)$ and $\eta_{2,m}(x)=\eta(\la^\gamma x-m)$, where $\gamma\in(\tf12,1)$ will be determined later. 
We denote $f_{j,m}=\psi_j\ast(\eta_{j,m}f_j),$ then $f_j=\sum_{m\in\bbz^2}f_{j,m}.$ 

Fix $c>1>\delta_j>0$, and $\gamma'\in(\gamma,1)$, which are to be determined. 
We apply  Lemma \ref{t18} with 
$
    \tilde N=c\la^{\gamma'},\, \rho=\la^{-\delta_2}, R=1
$
to get $f_{2,m}=f_{2,m,\flat}+f_{2,m,\sharp}$. We apply  Lemma~\ref{t18} with 
$
    \tilde N=c\la^{\gamma'},\, \rho=\la^{-\delta_1}
$
to get $f_{1,m}=f_{1,m,\flat}+f_{1,m,\sharp}$. Moreover, we have 
\begin{align*}
    &f_{1,m,\sharp}(x)=\sum_{n=1}^{N_{1,m}}h_{1,m,n}(Rx_1,\,R^2x_2)e^{i\alpha_{1,m,n}\cdot x}\\
    &f_{2,m,\sharp}(x)=\sum_{n=1}^{N_{2,m}}h_{2,m,n}(x)e^{i\alpha_{2,m,n}\cdot x},
\end{align*}
where $\|f_{j,m,\sharp}\|_\infty \lesssim N_{j,m}\lesssim\la^{\delta_j}$, $\|\partial^\alpha h_{j,m,n}\|_\infty\lesssim\la^{|\alpha|\gamma'},$ and
\begin{align}
    \int_{\bbr^2}\int_{|B_R\xi|\le \la^{\gamma'}}|\widehat{D_sf_{1,m,\flat}}|^2(\xi)d\xi ds
    \lesssim &\la^{-\delta_1}\|f_{1,m}\|^4_{L^2}\nonumber\\
    \lesssim &\la^{-\delta_1}\|\eta_{1,m}f_j\|^4_{L^2}\nonumber\\
    \le& R^{-6}\la^{-\delta_1-4\gamma}\label{e46}\\
    \int_{\bbr^2}\int_{|\xi|\le \la^{\gamma'}}|\widehat{D_sf_{2,m,\flat}}|^2(\xi)d\xi ds\lesssim&\la^{-\delta_2}\|f_{2,m}\|^4_{L^2}\nonumber\\
    \lesssim&\la^{-\delta_2}\|\eta_{2,m}f_2\|^4_{L^2}\nonumber\\
    \le&\la^{-\delta_2-4\gamma}\label{e47}
\end{align}
where we use $\|f_j\|_{L^\nf}\lesssim 1$.

Let $\widetilde{\eta}\in C^\infty_0$ satisfy $\widetilde{\eta}(x)=0$ for any $x\notin[-\tf65,\,\tf65]^2$ and $\widetilde{\eta}(x)=1$ for any $x\in[-\tf{11}{10},\tf{11}{10}]^2$. 
We denote $\widetilde{\eta}_{1,m}(x)=\widetilde{\eta}(\la^\gamma \big(Rx_1,\,R^2x_2)-m\big)$ and $\widetilde{\eta}_{2,m}(x)=\widetilde{\eta}(\la^\gamma x-m)$, and define $f_{j,m,err}=(1-\widetilde{\eta}_{j,m})f_{j,m}$. 
By definition, for any $K\in\bbz^+$ we have $$
|\psi_1(x)|\lesssim_K\la^2R^3|\la (Rx_1,\,R^2x_2)|^{-K}\text{ and }|\psi_2(x)|\lesssim_K\la^2|\la x|^{-K}.
$$ 
So for $x\in \spt(1-\widetilde\eta_{j,m})$,
we have
$$
|\psi_j\ast(\eta_{j,m}f_j)(x)|\lesssim_\gamma\la^{-10}
$$ 
by taking $K$ large enough. 
As a result
\begin{equation}\label{e30}
       \|f_{j,m,err}\|_{L^\nf}=\|(1-\widetilde{\eta}_{j,m})(\psi_j\ast(\eta_{j,m}f_j))\|_{L^\infty}\lesssim\la^{-10}.
\end{equation}

Let
\begin{align*}
    &f_{j,\flat}=\sum_{m\in \bbz^2}\widetilde{\eta}_{j,m}f_{j,m,\flat},\\
    &f_{j,\sharp}=\sum_{m\in \bbz^2}\widetilde{\eta}_{j,m}f_{j,m,\sharp},\\
    &f_{j,err}=\sum_{m\in \bbz^2}f_{j,m,err}.
\end{align*}
As the supports of $\widetilde{\eta}_m$ are finitely overlapping, we can get 
\begin{equation}\label{e071}
\|f_{j,\flat}\|_{\infty}\lesssim\la^{\delta_j} \text{ and} \quad
\|f_{j,\sharp}\|_{\infty}\lesssim\la^{\delta_j}.
\end{equation}
Because of the Bernstein inequality and $\spt \widehat{f_{j,m,\flat}}$ and $\spt \widehat{f_{j,m,\sharp}}$, we  get that
\begin{align}
    &\|\partial^\alpha f_{1,\flat}(\cdot/R,\,\cdot/R^2)\|_\infty\lesssim_{|\alpha|} \la^{\delta_1+|\alpha|}\text{ and }\|\partial^\alpha f_{2,\flat}\|_\infty\lesssim_{|\alpha|} \la^{\delta_2+|\alpha|}\label{e70};\\
    &\|\partial^\alpha f_{1,\sharp}(\cdot/R,\,\cdot/R^2)\|_\infty\lesssim_{|\alpha|} \la^{\delta_1+|\alpha|}\text{ and }\|\partial^\alpha f_{2,\sharp}\|_\infty\lesssim_{|\alpha|} \la^{\delta_2+|\alpha|}\label{e71}.
\end{align}

Let
\begin{align*}
    &T_\sharp=T(f_{1,\sharp},\,f_{2,\sharp}),\\
    &T_\flat=T(f_{1},\,f_{2,\flat})+T(f_{1,\flat},\,f_{2,\sharp}),\\
    &T_{err}=T(f_{1,err},\,f_{2,\sharp})+T(f_{1},\,f_{2,err}).
\end{align*}
We  find $T(f_1,\,f_2)=T_{\sharp}+T_{\flat}+T_{err}.$

Concerning the error term, we obtain from \eqref{e30} and \eqref{e071} the estimate 
\begin{equation}\label{e31}
\|T_{err}\|_{L^1([0,1]^2)}\lesssim\|f_{1}\|_{L^\infty}\|f_{2,err}\|_{L^\infty}+\|f_{1,err}\|_{L^\infty}\|f_{2,\sharp}\|_{L^\infty}\lesssim\la^{-10+\delta_1+\delta_2}.
\end{equation}

We estimate $T_{\sharp}$ and $T_{\flat}$ in following two sections respectively.

\section{Estimate for $\|T_\flat\|_{L^1}$}\label{s4'}
\subsection{Estimate for supports}
In this section, we estimate the $L^1([0,1]^2)$-norm of 
$$
T_\flat=T(f_{1},\,f_{2,\flat})+T(f_{1,\flat},\,f_{2,\sharp}).
$$ 
We will focus on estimating $\|T(f_{1},\,f_{2,\flat})\|_{L^1([0,1]^2)}$, as the argument for the term $\|T(f_{1,\flat},\,f_{2,\,\sharp})\|_{L^1([0,1]^2)}$ is similar
by  normalizing $\|f_{2,\sharp}\|_\infty$ appropriately. We remark that in handling the second term, we require $\de_1$ to be sufficiently small. 

By definition, 
$$
f_{2,\flat}=\sum_{m\in \bbz^2}\widetilde{\eta}_mf_{2,m,\flat}=\la^{\de_2}\sum_{m\in\bbz^2}g_{2,m},
$$
where
$$
g_{2,m}:=\la^{-\de_2}\widetilde{\eta}_{2,m}f_{2,m,\flat}
$$
satisfies  $\spt g_{2,m}\subseteq\spt\tilde\eta_{2,m}$ and $\|g_{2,m}\|_\infty\lesssim1$.
We define $g_{1,m}=\eta_{1,m}f_1$, then 
$$
f_1=\sum_{m\in\bbz^2} g_{1,m}.
$$
Let 
$$
J_j=\{m_j\in\bbz^2:\,\spt\widetilde\eta_{j,m_j}\bigcap[0,\,2]^2\neq\emptyset\},
$$
and $J=J_1\times J_2$, whose cardinality is $O(R^3\la^{4\gamma}).$ By the support of $ r$, 
$$
\|T(g_{1,m_1},g_{2,m_2})\|_{L^1(\zozo)}\neq0
$$
only if
$(m_1,\,m_2)\in J.$ In particular,
$$
\la^{-\delta_2}\|T(f_1,\, f_{2,\flat})\|_{L^1(\zozo)}\le \sum_{m\in J}\|T(g_{1,m_1},\, g_{2,m_2})\|_{L^1([0,1]^2)}.
$$

We observe that the support of $g_{1,m_1}(x+B_Rt)g_{2,m_2}(x+P(t))r(t)$ is contained in 

$$
\{(x,t)\in \bbr^2\times \bbr^2:\ t\in\spt r,\ x+B_Rt\in \la^{-\gamma}B_Rm_1+O(\la^{-\gamma})\},
$$
which however can be reduced with the help of $P(t)$.

For $(x,t)\in\bbr^2\times \bbr^2$ such that $g_{1,m_1}(x+B_Rt)g_{2,m_2}(x+P(t))r(t)\neq 0$ we have
\begin{equation}
\begin{cases}
	x+B_Rt\in \big(\la^{-\gamma}R^{-1}m_{1,1}+O(R^{-1}\la^{-\gamma}),\,\la^{-\gamma}R^{-2}m_{1,2}+O(R^{-2}\la^{-\gamma})\big)\\
	x+P(t)\in  \la^{-\gamma}m_{2}+O(\lambda^{-\gamma}).
\end{cases}\label{e10}
\end{equation}
We denote
$$
A_m=\{(x,\,t):\ (x,t)\text{ satisfies \eqref{e10} and }t\in\spt r\}.
    $$
    
\begin{lm}\label{t30}
When $m=(m_1,m_2)\in J$ is fixed,
$A_m$ is contained the union of at most  six sets, each of which is  a rectangular box of dimensions $\sim AR^{-1}\la^{-\gamma}\times AR^{-2}\la^{-\gamma} \times  A\la^{-\gamma} \times  A\la^{-\gamma} $. 
In particular,
\begin{equation}
	\|T(g_{1,m_1},\, g_{2,m_2})\|_{L^1([0,1]^2)}\lesssim AR^{-\frac{3}{2}}\lambda^{-\gamma}\|T(g_{1,m_1},\, g_{2,m_2})\|_{L^2([0,1]^2)}.\label{e12}
\end{equation} 
\end{lm}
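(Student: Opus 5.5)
The plan is to subtract the two constraints in \eqref{e10} to eliminate $x$. This shows that $t$ lies in a small preimage set, and the structure of $P(t)-B_Rt$ then gives at most six small pieces.

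\textbf{Step 1: pin $t$ down to at most six small pieces.} Subtracting the two conditions in \eqref{e10} gives
\[
\Phi(t):=P(t)-B_Rt\in \la^{-\gamma}m_2-\la^{-\gamma}B_Rm_1+O(\la^{-\gamma}),
\]
since $R\ge1$ makes the errors $O(R^{-1}\la^{-\gamma})$ and $O(R^{-2}\la^{-\gamma})$ harmless. So $t\in \spt r\cap \Phi^{-1}(z+O(\la^{-\gamma}))$ for a fixed $z$. The Jacobian of $\Phi$ is exactly the determinant in \eqref{e91}, which is $\ge A^{-1}$ on $\spt r$. Hence $\Phi$ is a local diffeomorphism there, and its inverse has derivative bounded by $\lesssim A$ (the entries of $D\Phi$ are $O(1)$ on $[0,1]^2$). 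The number of preimages is controlled by B\'ezout: $\Phi(t)=w$ is a system of a quadratic and a cubic in $t$, so it has at most $2\cdot3=6$ isolated solutions. The nondegenerate Jacobian excludes positive-dimensional components meeting $\spt r$.

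To pass from a point $w$ to the ball $z+O(\la^{-\gamma})$, I would argue as follows. Each connected component of $\spt r\cap\Phi^{-1}(B(z,C\la^{-\gamma}))$ on which $\Phi$ is injective maps onto a subset of the ball. The inverse function theorem with $|D\Phi^{-1}|\lesssim A$ then puts the component inside a ball of radius $\lesssim A\la^{-\gamma}$. This uses the standing assumption $A\le\la^{\ep'}$ so that these balls are genuinely small and quantitative inverse function bounds apply; the second derivatives of $\Phi$ are $O(1)$. The count of components is at most six, again from B\'ezout applied to a generic point of the ball. Thus $t$ lies in at most six squares $t^{(k)}+O(A\la^{-\gamma})$.

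\textbf{Step 2: locate $x$ and assemble the boxes.} For each such square, the first line of \eqref{e10} gives $x=-B_Rt+\la^{-\gamma}B_Rm_1+O(B_R\la^{-\gamma})$. Since $B_Rt$ varies by $O(AR^{-1}\la^{-\gamma})\times O(AR^{-2}\la^{-\gamma})$ over the square, $x$ lies in a box of dimensions $\sim AR^{-1}\la^{-\gamma}\times AR^{-2}\la^{-\gamma}$. Together with the $t$-square this gives the claimed boxes in $\bbr^4$.

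\textbf{Step 3: derive \eqref{e12}.} The projection of $A_m$ to the $x$-variable is contained in the union of at most six rectangles, each of area $\lesssim A^2R^{-3}\la^{-2\gamma}$. Since $T(g_{1,m_1},g_{2,m_2})(x)$ vanishes outside this projection, Cauchy–Schwarz gives
\[
\|T(g_{1,m_1},g_{2,m_2})\|_{L^1}\le |\pi_x A_m|^{1/2}\|T(g_{1,m_1},g_{2,m_2})\|_{L^2}\lesssim AR^{-3/2}\la^{-\gamma}\|T(g_{1,m_1},g_{2,m_2})\|_{L^2}.
\]

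\textbf{Main obstacle.} The delicate point is Step 1: turning the pointwise bound on the number of solutions into a uniform covering of the $\la^{-\gamma}$-neighborhood preimage by six balls of radius $A\la^{-\gamma}$. The difficulty is that near the boundary of $\spt r$, or near points where two preimage branches approach each other, components could merge or be counted badly. I would handle this by using the lower bound $|\det D\Phi|\ge A^{-1}$ to show that distinct preimages of a common point are separated by $\gtrsim A^{-1}$. This follows from a quantitative injectivity radius $\sim A^{-1}$, since $|D^2\Phi|=O(1)$. Because $A\la^{-\gamma}\ll A^{-1}$, the branches stay disjoint.
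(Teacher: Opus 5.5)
Your proposal is correct and is essentially the paper's argument: at most six algebraic preimages plus the Jacobian lower bound \eqref{e91} give $A$-Lipschitz inverse branches, so $t$ lies in at most six $O(A\la^{-\gamma})$-squares; the first line of \eqref{e10} then places $x$ in the thin rectangle, and Cauchy--Schwarz yields \eqref{e12}. The differences are minor. You eliminate $x$ first and work with the planar map $P(t)-B_Rt$, whose Jacobian determinant coincides with that of the paper's four-dimensional map $F$, and you count with B\'ezout rather than explicit elimination; your injectivity-radius argument (from $|D^2\Phi|=O(1)$ and $A\le\la^{\ep'}$, counting inverse branches rather than connected components of $\spt r\cap\Phi^{-1}(B)$) also supplies the justification for well-defined branches that the paper leaves implicit.
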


To prove this result, we need the following simple fact of matrix, whose proof is  given for the sake of completeness.
\begin{lm}\label{t5}
	Suppose that $B=(a_{pq})_{n\times n}$ is an $n\times n$ matrix. Let 
    $$
    \tau_0(B)=\min\{|\tau|:\tau \text{ is an eigenvalue of } B\}.
    $$
    Suppose that $\|B\|_2\le 1$, where $\|B\|_2=(\sum_{p,q=1}^{n}|a_{pq}|^2)^{1/2}$, and $|\det(B)|
    \ge c_1$, then $|\tau_0(B)|\ge c_1$.
    
\end{lm}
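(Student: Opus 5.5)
The plan is to combine two elementary facts: the determinant is the product of the eigenvalues, and each eigenvalue is bounded in modulus by the Frobenius norm $\|B\|_2$. Together these force the smallest eigenvalue to carry almost all of the size of $\det(B)$.

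First, I will show every eigenvalue of $B$, real or complex, satisfies $|\tau|\le \|B\|_2\le 1$. Let $Bv=\tau v$ with $v\in\mathbb{C}^n$ and $v\neq 0$. For each row $p$, Cauchy--Schwarz gives $|(Bv)_p|^2=|\sum_q a_{pq}v_q|^2\le (\sum_q|a_{pq}|^2)|v|^2$. Summing over $p$ yields
$$|\tau|^2|v|^2=|Bv|^2\le \|B\|_2^2|v|^2,$$
so $|\tau|\le \|B\|_2\le 1$.

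Second, I will list the eigenvalues with algebraic multiplicity as $\tau_1,\dots,\tau_n$, for example over $\mathbb{C}$ via the characteristic polynomial or a Schur triangularization. Then $\det(B)=\prod_{k=1}^n\tau_k$. Order them so that $|\tau_1|=\tau_0(B)$. Since $|\det B|\ge c_1>0$, no eigenvalue vanishes. Using $|\tau_k|\le 1$ for $k\ge 2$, we get
$$c_1\le|\det B|=|\tau_1|\prod_{k=2}^n|\tau_k|\le|\tau_1|=\tau_0(B),$$
which is the claim.

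I expect no real obstacle. The only points needing care are to work over $\mathbb{C}$, so that the product formula for the determinant holds even when $B$ is real with non-real eigenvalues, and to count eigenvalues with multiplicity. When $n=1$ the product over $k\ge2$ is empty and the inequality is immediate.
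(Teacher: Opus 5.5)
Your proposal is correct and follows essentially the same route as the paper: bound every eigenvalue by $\|B\|_2\le 1$ using an eigenvector, then write $|\det B|$ as the product of the eigenvalue moduli to get $c_1\le|\det B|\le\rho(B)^{n-1}\tau_0(B)\le\tau_0(B)$. Your version adds a few details the paper leaves implicit: the explicit Cauchy--Schwarz step, working over $\mathbb{C}$ with multiplicities, and the case $n=1$.
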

\begin{proof}
    	Let $\rho(B)=\max\{|\tau|:\tau\text{ is an eigenvalue of }B\}$ be the spectral radius of $B$,
        which is bounded by $\|B\|_2\le 1$ by inspecting an eigenvector corresponding to $\rho(B)$. 
       Then we have
        $$
        c_1\le |\det(B)|\le\rho(B)^{n-1}\tau_0(B)\le \tau_0(B).
        $$
        We finish the proof.
\end{proof}

\begin{proof}[Proof of Lemma~\ref{t30}]

Defining a mapping $F$: $(x_1,\, x_2,\, t_1,\,t_2)\to(z_1,\,z_2,\,z_3,\,z_4)$ by
\begin{equation}
	\begin{cases}
		x+B_Rt=(z_1,\,z_2)\\
		x+P(t)=(z_3,\,z_4),\\
	\end{cases}\label{e11}
\end{equation}
we obtain
$$
(z_3-t_1^2-z_1+t_1/R)(z_3-t_1^2-z_1+t_1/R-1/R^2)^2=(z_4-z_2-t_1^3)^2
$$ 
by eliminating $x$ and $t_2$.
When $z_1,\,z_2,\,z_3,\,z_4$ are fixed, as an equation of $t$, it is an equation of degree $6$, so there are at most six solutions. In particular, there are at most six tuples  $(x,\, y,\, t_1,\,t_2)$ satisfying \eqref{e11}. So we can define six  mappings $G_i$, $1\le i\le 6$, from $(z_1,\,z_2,\,z_3,\,z_4)$ to $(x,\, y,\, t_1,\,t_2)$ describing all solutions to \eqref{e11}. We fix $i$ and denote $G_i$ by $G$ for simplicity.

We claim that, when $m$ is fixed and  $G_i(z)\in A_m$, $G_i(z)$ lies in a rectangular box of dimensions $\sim AR^{-1}\la^{-\gamma}\times AR^{-2}\la^{-\gamma} \times  A\la^{-\gamma} \times  A\la^{-\gamma} $ for any $1\le i\le 6$,  where $z=(z_1,z_2,z_3,z_4)$. To prove this, it suffices to estimate $G(z)-G(z')$.
We observe first that the Jacobian of $F$ is
$$J(F)=
\begin{bmatrix}
	1&0&1/R&0\\0&1&0&1/R^2\\1&0&2t_1&2t_2\\0&1&3t_1^2&3t_2^2
\end{bmatrix}, $$
whose determinant is
$|det(J(F))|=\big|6t_1t_2^2-6t_1^2t_2-\frac{3t_2^2}{R}-2\frac{t_1}{R^2}+\frac{1}{R^3}\big|$. By \eqref{e91}, we  get  $|det(J(F)|\ge A^{-1}$, which implies that $\tau_0(J(F))\gtrsim A^{-1}$ by Lemma \ref{t5}.
By the inverse function theorem, we can get $J(G_i)=(J(F))^{-1}$, which 
combined with the mean value theorem yields that
$$
|G_i(z)-G_i(z')|\lesssim \tau_0(J(F))^{-1}|z-z'|\lesssim A|z-z'|.
$$
As $|z-z'|\lesssim \la^{-\gamma}$ by \eqref{e10},
we obtain further that $G_i(z)$ lies in a ball with radius $O(A\la^{-\gamma})$. In particular, 
$x$ lies in a rectangle of dimensions  $O(AR^{-1}\la^{-\gamma})\times O(AR^{-2}\la^{-\gamma})$ recalling the first equation in \eqref{e10}. We finish the proof of the claim and the lemma.

\end{proof}

Without loss of generality, we may fix one rectangular box.  In particular $supp(T(g_{1,m_1},\, g_{2,m_2})(x))$ is contained in a rectangle of dimensions $\sim AR^{-1}\la^{-\gamma}\times AR^{-2}\la^{-\gamma}$.

\subsection{$T T^*$}
We write 
\begin{align}
	&\|T(g_{1,m_1},\, g_{2,m_2})\|_2^2\\
    =&\int_{\bbr^6}g_{1,m_1}\big(x+B_R(t+s))\big)
	\overline{g_{1,m_1}}
		\big(x+B_Rt\big)g_{2,m_2}(
x+P(t+s))
\nonumber\\&\overline{g_{2,m_2}}(
	x+P(t)
)r(t)r(t+s)dxdtds.\label{e13}
\end{align}
Because 
$	x+B_Rt$, 
$x+B_R(t+s)\in \spt g_{1,m_1}$, 
we  find $|s|\lesssim \lambda^{-\gamma}$, where the implicit constant is independent of $R$. 
Let $(\overline{x},\,\overline{t}_m)\in A_{m}$, then $|t-\overline{t}_m|\lesssim A\lambda^{-\gamma}$ for any $(x,\,t)\in A_m$.  To simplify the notation, we write $\overline{t}_m=(\overline t_{m,1}, \overline t_{m,2})$ as $\overline{t}=(\overline t_{1}, \overline t_{2})$ when $m$ is fixed.

As $P(t+s)-P(t)$ is nonlinear in $s$, we may overcome this obstacle by fixing $t$ to get a linear approximation.
From the definition of $g_{2,m_2}$ and \eqref{e70}, we get $\|\nabla g_{2,m_2}\|_{\infty}\lesssim\lambda$. 
By the mean value theorem, we can get 
\begin{align}
    &g_{2,m_2}(x+P(t+s))\nonumber\\
   = &g_{2,m_2}(x+P(t)+M(\overline{t})s)+O(A^2\lambda^{-2\gamma+1})\label{e14},
\end{align}
where $$
M(\overline{t})=
\begin{bmatrix}
    2\overline{t_1}&2\overline{t_2}\\
    3\overline{t_1}^2&3\overline{t_2}^2
\end{bmatrix}
$$
and 
$$
M(\overline{t})s=(2s_1\overline{t_1}+2s_2\overline{t_2},\,3s_1\overline{t_1}^2+3s_2\overline{t_2}^2).
$$
Recalling that $$D_{s}f(x)=f(x+s)\overline{f(x)},
$$
{\color{red}we obtain from  \eqref{e13}  and \eqref{e14} that
\begin{align*}
   &\|T(g_{1,m_1},\, g_{2,m_2})\|_2^2\\
   \lesssim&\int_{|s|\lesssim \lambda^{-\gamma}}\big|\int_{\bbr^4}D_{B_Rs}g_{1,m_1}\big(x+B_Rt)D_{M(\overline{t})s}g_{2,m_2}(x+P(t))\\
   +&O(A^2\la^{-2\delta}))r(t)r(t+s)dxdt\big|ds,
\end{align*}
where $\delta=\gamma-\frac{1}{2}>0.$ 
}

Plugging this estimate into \eqref{e12}, we finally obtain
\begin{align}
	&\|T(f_1,\,f_{2,\flat})\|_{L^1(\zozo)}\nonumber\\
	\lesssim&AR^{-3/2}\lambda^{-\gamma+\de_2}\sum_{m\in J}\Big(\int_{|s|\lesssim \lambda^{-\gamma}}\big|\int_{\bbr^4}D_{B_Rs}g_{1,m_1}\big(x+B_Rt)D_{M(\overline{t})s}g_{2,m_2}(x+P(t)))\notag\\
	&r(t)r(t+s)dxdt\big|ds\Big)^{\frac{1}{2}}\label{e15}\\
    &+O(A^{O(1)}\la^{-\delta+\de_2})\label{e088},
\end{align}
where we use Lemma~\ref{t30} to control the error term \eqref{e088}.
 By the Cauchy-Schwarz inequality, the main term  \eqref{e15} can be controlled by
\begin{align}
	&A\lambda^{\gamma+\de_2}\nonumber\\
    &\Big(\int_{
				 |s|\lesssim \lambda^{-\gamma}
		}\sum_{m\in J}\big|\int_{\bbr^4}D_{B_Rs}g_{1,m_1}\big(x+B_Rt)D_{M(\overline{t})s}g_{2,m_2}(x+P(t)))r(t)r(t+s)dxdt\big|ds\Big)^{\frac{1}{2}}.\label{e16}
\end{align}

In \cite{Christ2020c}, a similar term is handled by expanding the inner functions in Fourier series. We  instead work directly with the Fourier transform, as Lemma~\ref{t18}  (iii) is formulated using Fourier transform. This approach is slightly simpler, although both approaches are essentially equivalent.

\subsection{High frequency}
 
Recall that the support of $D_{B_Rs}g_{1,m_1}$ is contained in a rectangle of dimensions $\sim R^{-1}\lambda^{-\gamma}\times R^{-2}\lambda^{-\gamma}$ and the support of $D_{M(\overline{t})s}g_{2,m_2}$ is contained in a square with side length $\lambda^{-\gamma}$ .
Let $\tilde{\tilde\eta}\in C^\infty(\bbr)$ satisfy that, for $x\notin[-1,\,2]^2$, $\tilde{\tilde\eta}(x)=0$, and, for $x\in\zozo$, $\tilde{\tilde\eta}(x)=1$.
From the support of $D_sg_{j,m_j}$, we can get that
\begin{align}
    &\int_{\bbr^4}D_{B_Rs}g_{1,m_1}(x+B_Rt)D_{M(\overline{t})s}g_{2,m_2}(x+P(t))
r(t)r(t+s)dxdt\nonumber\\
=&\int_{\bbr^4}D_{B_Rs}g_{1,m_1}(x+B_Rt)D_{M(\overline{t})s}g_{2,m_2}(x+P(t))
\zeta_m(x,\,t)dxdt,\nonumber\\
=&\int_{\bbr^4}\widehat{D_{B_Rs}g}_{1,m_1}(\xi)\widehat{D_{M(\overline{t})s}g}_{2,m_2}(\eta)\nonumber\\
    &\qquad \int_{\bbr^4}e^{2\pi i((x+B_Rt)\cdot\xi+(x+P(t))\cdot\eta)}\zeta_m(x,\,t)dxdtd\eta d\xi.\label{e96}
\end{align}
where
$$
\zeta_m(x,\,t)=\tilde{\tilde\eta}((\la^{\gamma}((Rx_1,R^2x_2)+t)-m_1)\tilde{\tilde\eta}(\la^{\gamma}(x+P(t))-m_2)r(t)r(t+s)
$$

The contribution of the high frequency part, namely $(B_R\xi,\eta)\notin B(0,\la^{1+\delta_3})$, is small.

\begin{prop}\label{t50}
For any $K\ge1$, we  have
    \begin{align*}
    &\int_{				 |s|\lesssim \lambda^{-\gamma}		}
     \sum_{m\in J}\big|\int_{\{(\xi,\eta):|B_R\xi|\le\la^{1+\delta_3},|\eta|\le\la^{1+\delta_3}\}^c}\widehat{D_{B_Rs}g}_{1,m_1}(\xi)\widehat{D_{M(\overline{t})s}g}_{2,m_2}(\eta)\\
    &\qquad \int_{\bbr^4}e^{2\pi i((x+B_Rt)\cdot\xi+(x+P(t))\cdot\eta)}\zeta_m(x,\,t)dxdtd\eta d\xi\big| ds\\
    &    \lesssim_K A^4\lambda^{-K}.
    \end{align*}
\end{prop}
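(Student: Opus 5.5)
The plan is to integrate by parts in the inner oscillatory integral over $(x,t)$. When $|B_R\xi|$ or $|\eta|$ exceeds $\la^{1+\delta_3}$, the phase has a large gradient compared with the derivative size of $\zeta_m$, so the inner integral decays rapidly. The outer factors are then controlled by trivial $L^2$/$L^1$ bounds.

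First write the inner integral as
\[
I_m(\xi,\eta)=\int_{\bbr^4}e^{2\pi i\Phi(x,t)}\zeta_m(x,t)\,dx\,dt,\qquad \Phi=(x+B_Rt)\cdot\xi+(x+P(t))\cdot\eta .
\]
Then perform the change of variables $x\mapsto (x_1/R,x_2/R^2)$, so that $(x+B_Rt)\cdot\xi=(x'+t)\cdot B_R\xi$ and the cutoff $\tilde{\tilde\eta}(\la^\gamma(x'+t)-m_1)$ becomes isotropic at scale $\la^{-\gamma}$. After this rescaling, every derivative of $\zeta_m$ in $(x',t)$ costs at most $\max(\la^\gamma, A)\cdot O(1)$. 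Here $\|\partial^\alpha r\|_\infty\le A^{|\alpha|}$, the factor $\partial_t P$ is bounded on $[0,1]^2$, and $x'$-derivatives of the second cutoff are at most $\la^\gamma\max(1,R^{-1},R^{-2})\le\la^\gamma$. Since $A\le\la^{\ep'}\le\la^\gamma$, each derivative costs $O(\la^\gamma)$ in total.

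The gradient of the phase is
\[
\nabla_{x'}\Phi=B_R\xi+B_R\eta, \qquad \nabla_t\Phi=B_R\xi+M(t)^T\eta,
\]
with $|M(t)|\lesssim1$ and $|B_R\eta|\le|\eta|$. Two cases arise.
\begin{itemize}
\item If $|\eta|\ge\la^{1+\delta_3}$ and $|\eta|\ge C|B_R\xi|$, I would use $\nabla_t\Phi-\nabla_{x'}\Phi=M(t)^T\eta-B_R\eta$. Its size needs a lower bound. This requires $M(t)-B_R$ to be nondegenerate, and that is exactly the determinant assumption \eqref{e91}: $|\det(M(t)^T-B_R)|\ge A^{-1}$. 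Lemma~\ref{t5} then gives $|(M^T-B_R)\eta|\gtrsim A^{-1}|\eta|$.
\item Otherwise $|B_R\xi|\gtrsim\la^{1+\delta_3}$ dominates, and $|\nabla_{x'}\Phi|\gtrsim|B_R\xi|$.
\end{itemize}
In both cases $|\nabla\Phi|\gtrsim A^{-1}\max(|B_R\xi|,|\eta|)$. Integrating by parts $N$ times with the vector field $\nabla\Phi\cdot\nabla/(2\pi i|\nabla\Phi|^2)$ also requires bounds on second derivatives of $\Phi$. These are $O(|\eta|)$, which gives relative gains of $A^2\la^{\gamma}/|\eta|$ per step. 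The result is
\[
|I_m|\lesssim_N R^{-3}\la^{-4\gamma}\bigl(A^2\la^{\gamma}/\max(|B_R\xi|,|\eta|)\bigr)^N .
\]
The prefactor $R^{-3}\la^{-4\gamma}$ is the support volume.

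Next, bound the functions crudely. We have $|\widehat{D_{B_Rs}g_{1,m_1}}(\xi)|\le\|g_{1,m_1}\|_2^2\lesssim R^{-3}\la^{-2\gamma}$, and $|\widehat{D g_{2,m_2}}|\lesssim\la^{-2\gamma}$. Integrating over $\{|B_R\xi|>\la^{1+\delta_3}\}$ or $\{|\eta|>\la^{1+\delta_3}\}$ against the decay in the previous display, with $N$ large, converges. The $R^3$ Jacobian from $d\xi$ cancels against these prefactors, and the integral is $\lesssim A^{2N}\la^{-N\delta_3+O(1)}$. Summing over $|J|\lesssim R^3\la^{4\gamma}$, which is absorbed by the $R^{-3}$ factors, and integrating over $|s|\lesssim\la^{-\gamma}$ gives $\lesssim A^{O(N)}\la^{-N\delta_3+O(1)}$. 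Choosing $N\sim K/\delta_3$ gives the claim, after absorbing powers of $A$ via $A\le\la^{\ep'}$ with $\ep'$ small relative to $\delta_3$.

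The main obstacle is the lower bound on $|\nabla\Phi|$ when $\xi$ and $\eta$ nearly cancel, i.e. when $B_R\xi\approx-M(t)^T\eta$ and $B_R\xi\approx -B_R\eta$ simultaneously. Handling this needs \eqref{e91} through Lemma~\ref{t5}, uniformly in $R$. Keeping the $A$-dependence polynomial is the delicate point, since the stated bound $A^4$ suggests a sharper bookkeeping: the $A$-losses should be traded into powers of $\la^{-1}$ rather than carried per integration by parts.
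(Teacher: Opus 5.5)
Your argument is correct, but it takes a genuinely different route from the paper's. The paper does not integrate by parts for this proposition. It splits $D_{B_Rs}g_{1,m_1}=G_1^H+G_1^L$ and $D_{M(\overline t)s}g_{2,m_2}=G_2^H+G_2^L$, and it uses the \emph{smoothness of the inputs}: $\|\partial^\alpha D_{B_Rs}g_{1,m_1}\|_\infty\lesssim R^{\alpha_1+2\alpha_2}\lambda^{|\alpha|}$ and $\|\partial^\alpha D_{M(\overline t)s}g_{2,m_2}\|_\infty\lesssim\lambda^{|\alpha|}$, inherited from \eqref{e004} and \eqref{e70}. These give $\|G_j^H\|_\infty\le\|\widehat{G_j^H}\|_1\lesssim_K\lambda^{-K}$. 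The $(x,t)$-integral is then bounded trivially by $|A_m|\lesssim A^4R^{-3}\lambda^{-4\gamma}$, which is where the $A^4$ comes from. You instead use only $\|\widehat{D_sg}\|_\infty\le\|g\|_2^2$ and put all the work on the kernel $I_m$. The key is your observation that $\nabla_t\Phi-\nabla_{x'}\Phi=(M(t)^T-B_R)\eta$, with $\det(M(t)^T-B_R)$ exactly the quantity in \eqref{e91}. The paper's route needs no geometry of the pattern and no relation between $A$ and $\lambda$. Yours needs \eqref{e91} and the standing reduction $A\le\lambda^{\epsilon'}$ to absorb $A^{O(N)}$. In exchange, it needs no frequency localization of $g_{j,m_j}$ and proves more: the kernel is already negligible once $\max(|B_R\xi|,|\eta|)\ge\lambda^{\gamma+O(\epsilon')}$, not just beyond $\lambda^{1+\delta_3}$. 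The same bound would therefore also dispose of the region $|\eta|\gtrsim\lambda^{\gamma+\kappa}$ treated in Proposition~\ref{t66}.

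Three small repairs are needed.

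First, your case split is mis-stated. If $|\eta|<C|B_R\xi|$ with $C$ large, then $|B_R(\xi+\eta)|$ need not be $\gtrsim|B_R\xi|$. However, $|\nabla\Phi|\ge 2^{-1/2}|(M(t)^T-B_R)\eta|\gtrsim A^{-1}|\eta|$ holds with no hypothesis on $\xi$, so simply split at $|\eta|\ge\frac12|B_R\xi|$.

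Second, the bound $|(M(t)^T-B_R)\eta|\gtrsim A^{-1}|\eta|$ is a smallest-singular-value statement, not the eigenvalue bound of Lemma~\ref{t5}. It follows from $\sigma_{\min}=|\det|/\sigma_{\max}$ for a $2\times 2$ matrix with $O(1)$ entries.

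Third, your summary $A^{2N}\lambda^{-N\delta_3}$ undersells your own estimate, and it would not decay with the paper's choice $\epsilon'=\delta_3$. The actual per-step gain is $A^2\lambda^{\gamma}/\lambda^{1+\delta_3}\le\lambda^{-(1-\gamma)+2\epsilon'}$, so you only need $\epsilon'\ll 1-\gamma$. The factor $A^4$ in the statement then requires no sharper bookkeeping.
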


Let
\begin{align*}
    &\wh {G_1^H}(\xi)=\wh{D_{B_Rs}g_{1,m_1}}(\xi) \chi_{(B(0,\la^{1+\delta_3}))^c}(B_R\xi),\\
    &\wh {G_2^H}(\eta)=\wh{D_{M(\overline t)s}g_{2,m_2}}(\eta) \chi_{(B(0,\la^{1+\delta_3}))^c}(\eta),
\end{align*}
and
\begin{align*}
    &\wh {G_1^L}(\xi)=\wh{D_{B_Rs}g_{1,m_1}}(\xi) \chi_{B(0,\la^{1+\delta_3})}(B_R\xi),\\
    &\wh {G_2^L}(\eta)=\wh{D_{M(\overline t)s}g_{2,m_2}}(\eta) \chi_{B(0,\la^{1+\delta_3})}(\eta).
\end{align*}

\begin{lm}\label{t10001}
For $\la\ge 1$ and $R\ge 1$, we have
$$
|\int_{\bbr^4} G_1^H(x+B_Rt) D_{M(\overline t)s}g_{2,m_2}(x+P(t))\zeta_m(x,t)dxdt|\lesssim_K A^4 R^{-3}\la^{-K}
$$
and
$$
|\int_{\bbr^4} G_1^L(x+B_Rt) G_2^H(x+P(t))\zeta_m(x,t)dxdt|\lesssim_K A^4 R^{-3}\la^{-K}.
$$
\end{lm}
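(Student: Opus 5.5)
For the first estimate I will integrate by parts in $x$ only; this kills the $t$-dependence issue. Write $G_1^H = \big(\wh{G_1^H}\big)^\vee$ and note that $\wh{G_1^H}$ is supported where $|B_R\xi|\ge \la^{1+\delta_3}$. Undo the Fourier expansion in the second factor only and write the integral as
\[
\int_{\bbr^2}\wh{G_1^H}(\xi)\int_{\bbr^4}e^{2\pi i (x+B_Rt)\cdot\xi}\,D_{M(\overline t)s}g_{2,m_2}(x+P(t))\,\zeta_m(x,t)\,dx\,dt\,d\xi .
\]
Change variables $x=B_R^{-1}... $, more precisely $y=(Rx_1,R^2x_2)$. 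Then $(x+B_Rt)\cdot\xi=(y+t)\cdot B_R\xi$. In the $y$ variable:
\begin{itemize}
\item $\zeta_m$ has derivatives of size $\lesssim (A+\la^{\gamma})^{k}$, since $\la^\gamma(y+t)$ appears in its first cutoff and $\la^{\gamma}(x+P(t))$ has $y$-derivatives $\la^{\gamma}/R$ or smaller.
\item $D_{M(\overline t)s}g_{2,m_2}(x+P(t))$ has $y$-derivatives of order $k$ bounded by $\la^{k(1+\delta_2)}$, using \eqref{e70} and $R\ge1$.
\end{itemize}
Integrating by parts $K'$ times in $y$ along the direction of $B_R\xi$ therefore gains $\big(\la^{1+\delta_2}A/|B_R\xi|\big)^{K'}$. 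This is $\la^{-(\delta_3-\delta_2)K'}A^{K'}$, which is small provided we choose $\delta_3>\delta_2$. I also keep the decay $|B_R\xi|^{-K'}$ itself to make the $\xi$-integral converge.

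Next bound $|\wh{G_1^H}(\xi)|\le\|D_{B_Rs}g_{1,m_1}\|_{L^1}\lesssim R^{-3}\la^{-2\gamma}$, from the support of $g_{1,m_1}$ and $\|g_{1,m_1}\|_\infty\lesssim 1$. The Jacobian $R^{-3}$ of $x\mapsto y$ and the $\xi\mapsto B_R\xi$ substitution (factor $R^3$) cancel against each other. The remaining $x,t$ integration is over a set of measure $\lesssim A^{O(1)}R^{-3}\la^{-4\gamma}$ by Lemma~\ref{t30}. Altogether this gives $A^{O(1)}R^{-3}\la^{-K}$, and with $K'$ chosen suitably the $A$-power can be arranged as $A^4$ after using the standing assumption $A\le\la^{\ep'}$ to absorb extra powers of $A$ into $\la^{-K}$.

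For the second estimate the integration by parts is in $x$ again, now against the phase $(x+P(t))\cdot\eta$ with $|\eta|\ge\la^{1+\delta_3}$. First expand $G_2^H$ via its Fourier transform, keeping $G_1^L$ in physical space. By definition $G_1^L$ has frequencies with $|B_R\xi|\le\la^{1+\delta_3}$. Hence $y\mapsto G_1^L(B_R^{-1}y)$ has $y$-derivatives of order $k$ bounded by $\la^{k(1+\delta_3)}$. Unfortunately this does not beat $|\eta|\ge\la^{1+\delta_3}$ directly, and this is the main obstacle. I would handle it in one of two ways:
\begin{itemize}
\item Split $|\eta|\ge\la^{1+2\delta_3}$ versus the annulus $\la^{1+\delta_3}\le|\eta|<\la^{1+2\delta_3}$.
\item Better: note that $\wh{D_{M(\overline t)s}g_{2,m_2}}$ itself decays rapidly beyond $\la^{1+\delta_2}$, since $g_{2,m_2}$ is a smooth cutoff times a function with Fourier support in $B(0,C\la)$. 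Then $\wh{G_2^H}$ is already $O_K(\la^{-K})$ pointwise, because $\delta_3>\delta_2$.
\end{itemize}
Concretely, for the second route, integrate by parts in the definition of $\wh{D g_{2,m_2}}(\eta)$ using \eqref{e70}. This gives $|\wh{G_2^H}(\eta)|\lesssim_K \la^{-2\gamma}(\la^{1+\delta_2}/|\eta|)^{K}$, which is integrable in $\eta$ and $O(\la^{-K})$ overall. Then bound $|G_1^L|\le\|\wh{G_1^L}\|_{L^1}\lesssim R^{3}\la^{2(1+\delta_3)}\cdot R^{-3}\la^{-2\gamma}$ trivially. Combined with the measure of the support of $\zeta_m$, $\lesssim A^{O(1)}R^{-3}\la^{-4\gamma}$ by Lemma~\ref{t30}, this yields $A^4R^{-3}\la^{-K}$. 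The same pointwise Fourier decay argument applied to $g_{1,m_1}$, using \eqref{e70} for $f_1$, gives an alternative proof of the first estimate as well.
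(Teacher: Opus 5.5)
Your argument is correct in substance, and your second estimate (your ``better'' route) is essentially the paper's. The paper also gets $|\wh{G_2^H}(\eta)|\lesssim_K \la^{-2\gamma}(\la^{-1}|\eta|)^{-K}$ from derivative bounds on $D_{M(\overline t)s}g_{2,m_2}$. It then concludes $\|G_2^H\|_\infty\le\|\wh{G_2^H}\|_1\lesssim\la^{-K}$ and bounds the integral trivially by the support size.

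The routes differ in the first estimate and in how $G_1^L$ is handled.
\begin{itemize}
\item The paper treats both factors symmetrically. It uses the anisotropic Bernstein bounds $\|\partial^\alpha D_{B_Rs}g_{1,m_1}\|_\infty\lesssim R^{\alpha_1+2\alpha_2}\la^{|\alpha|}$, which come from $\spt\wh f_1\subset Q_{R,\la}$. These give $\|G_1^H\|_\infty\lesssim\la^{-K}$ directly, and hence $\|G_1^L\|_\infty\le\|D_{B_Rs}g_{1,m_1}\|_\infty+\|G_1^H\|_\infty\lesssim1$.
\item You instead integrate by parts in $y=(Rx_1,R^2x_2)$ inside the trilinear integral, putting the derivatives on $D_{M(\overline t)s}g_{2,m_2}(x+P(t))\zeta_m$. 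You then bound $G_1^L$ crudely by $\|\wh{G_1^L}\|_1\lesssim\la^{2+2\delta_3-2\gamma}$, which the $\la^{-K}$ from $G_2^H$ absorbs.
\end{itemize}
What your route buys is that it never uses any regularity of $g_{1,m_1}$; only $\|D_{B_Rs}g_{1,m_1}\|_1\lesssim R^{-3}\la^{-2\gamma}$ enters. The paper's route is shorter, avoids integrating by parts in the $(x,t)$ integral, and produces the cleaner intermediate facts $\|G_j^H\|_\infty\lesssim\la^{-K}$ and $\|G_j^L\|_\infty\lesssim1$.

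Your closing ``alternative proof'' of the first estimate is exactly the paper's proof. However, the bounds it needs for $g_{1,m_1}=\eta_{1,m_1}f_1$ come from Bernstein applied to $f_1$, not from \eqref{e70}, which concerns $f_{1,\flat}$.

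Two bookkeeping slips should be fixed, because as written they impose parameter constraints that the paper's choices violate.
\begin{itemize}
\item \emph{Overestimated derivative bounds.} The function $g_{2,m_2}=\la^{-\delta_2}\widetilde\eta_{2,m_2}f_{2,m_2,\flat}$ already has the $\la^{\delta_2}$ normalized out, so its $k$-th derivatives are $\lesssim\la^{k}$, not $\la^{k(1+\delta_2)}$. Likewise, the $y$-derivatives of the amplitude carry no power of $A$: the $A$-dependence of $\zeta_m$ sits in $r(t)r(t+s)$, which does not depend on $y$. With the correct bounds the gain is $(\la/|B_R\xi|)^{K'}\le\la^{-\delta_3K'}$, and $(\la/|\eta|)^K$ in the second estimate. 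You then need neither $\delta_3>\delta_2$ nor $A\le\la^{\ep'}$. This matters because the paper takes $\delta_3=\ep'=10^{-8}<\delta_2=10^{-6}$, and under those values your stated gain $\la^{-(\delta_3-\delta_2)K'}A^{K'}$ gives no decay at all.
\item \emph{Double-counted $R^{-3}$.} Once you pass to $y$ and cancel the Jacobian $R^{-3}$ against the $R^{3}$ from $\xi\mapsto B_R\xi$, the support must be measured in $(y,t)$, where it is $\lesssim A^4\la^{-4\gamma}$. Measuring it in $(x,t)$ as well would count $R^{-3}$ twice. Your final factor $R^{-3}$, coming from $|\wh{G_1^H}|\lesssim R^{-3}\la^{-2\gamma}$, is the correct one.
\end{itemize}
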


\begin{proof}
Since $\|\partial^\alpha D_{B_Rs}g_{1,m_1}\|_\infty\lesssim R^{\alpha_1+2\alpha_2}\la^{|\alpha|}$ and 
$|\spt D_{B_Rs}g_{1,m_1}|\lesssim R^{-3}\la^{-2\gamma}$, we have, 
for $|B_R\xi|\ge{\la^{1+\delta_3}}$, 
$$
\big|\wh {G_1^H}(\xi)\big|=\big|\widehat{D_{B_Rs}g_{1,m_1}}(\xi)\big|\lesssim_KR^{-3}\la^{-2\gamma}(\la^{-1}|B_R\xi|)^{-K}.
$$
Similarly, because $\|\partial^\alpha D_{M(\bar t)s}g_{2,m_2}\|_\infty\lesssim \la^{|\alpha|}$, for $|\xi|\ge{\la^{1+\delta_3}}$, we have
$$
\big|\wh {G_2^H}(\xi)\big|=\big|\widehat{D_{M(\bar t)s}g_{2,m_2}}(\xi)\big|\lesssim_K\la^{-2\gamma}(\la^{-1}|\xi|)^{-K}.
$$
So 
$$
\|G_j^H\|_\infty\le\|\widehat{G_j^H}\|_1\lesssim_{K,\delta_3}\la^{-K}.
$$
Notice that
$$
\|G_j^H+G_j^L\|_\infty\le1,
$$
so we also have $\|G^{L}_{j}\|_\infty\lesssim1$.
Combining the above estimate, we have
\begin{align*}
    &|\int_{\bbr^4} G_1^H(x+B_Rt) D_{M(\overline t)s}g_{2,m_2}(x+P(t))\zeta_m(x,t)dxdt|\\
    \le &|A_m|\|G_1^H\|_\infty\|D_{M(\overline t)s}g_{2,m_2}\|_\infty\\
    \lesssim_K &A^4 R^{-3}\la^{-K}.
\end{align*}
Similarly, we can also get 
$$
|\int_{\bbr^4} G_1^L(x+B_Rt) G_2^H(x+P(t))\zeta_m(x,t)dxdt|\lesssim_K A^4 R^{-3}\la^{-K}
$$
\end{proof}
\begin{proof}[Proof of Proposition~\ref{t50}]
    By  Lemma~\ref{t10001} and the Fourier inversion formula, we have
    \begin{align*}
    &\int_{				 |s|\lesssim \lambda^{-\gamma}		}
     \sum_{m\in J}\big|\int_{\{(\xi,\eta):|B_R\xi|\le\la^{1+\delta_3},|\eta|\le\la^{1+\delta_3}\}^c}\widehat{D_{B_Rs}g}_{1,m_1}(\xi)\widehat{D_{M(\overline{t})s}g}_{2,m_2}(\eta)\\
    &\qquad \int_{\bbr^4}e^{2\pi i((x+B_Rt)\cdot\xi+(x+P(t))\cdot\eta)}\zeta_m(x,\,t)dxdtd\eta d\xi\big| ds\\
    =&\int_{				 |s|\lesssim \lambda^{-\gamma}		}
     \sum_{m\in J}|\int_{\bbr^4} G_1^H(x+B_Rt) D_{M(\overline t)s}g_{2,m_2}(x+P(t))\zeta_m(x,t)dxdt|ds\\
    &+
\int_{				 |s|\lesssim \lambda^{-\gamma}		}
     \sum_{m\in J}|\int_{\bbr^4} G_1^L(x+B_Rt) G_2^H(x+P(t))\zeta_m(x,t)dxdt|ds\\
     \lesssim& A^4 \la^{-K},
    \end{align*}
    which implies Proposition~\ref{t50}.
\end{proof}
By Proposition~\ref{t50} and \eqref{e96}, to estimate \eqref{e16}, we only need to estimate

\begin{align}
  &A\lambda^{\gamma+\de_2}\Big(\int_{
				 |s|\lesssim \lambda^{-\gamma}
		}\sum_{m\in J}\big|\int_{|B_R\xi|\le\la^{1+\delta_3}}\int_{|\eta|\le\la^{1+\delta_3}}\widehat{D_{B_Rs}g}_{1,m_1}(\xi)\widehat{D_{M(\overline{t})s}g}_{2,m_2}(\eta)\label{e205}\\
    &\qquad \int_{\bbr^4}e^{2\pi i((x+B_Rt)\cdot\xi+(x+P(t))\cdot\eta)}\zeta_m(x,\,t)dxdtd\eta d\xi\big|ds)^{\frac12}.\notag
\end{align}

\subsection{Low frequency}

Fix $m\in J$ and $
    |s|\lesssim \lambda^{-\gamma}
$.
Fix $\delta_4>0$ small enough. Let 
\begin{align*}
     S_m:=&\{(\xi,\,\eta)\in\bbr^4:\,|B_R\xi|\le \lambda^{1+\delta_3},\,|\eta|\le\lambda^{1+\delta_3},\\
     &        |\xi_1+\eta_1|\le R\la^{\gamma+\delta_4},\,|\xi_2+\eta_2|\le R^2\la^{\gamma+\delta_4}\\
        &|\xi_1/R+2\overline{t_1}\eta_{1}+3\overline{t_1}^2\eta_{2}|\le \la^{\gamma+\delta_4},\,
        |\xi_{2}/R^2+2\overline{t_2}\eta_{1}+3\overline{t_2}^2\eta_{2}|\le \la^{\gamma+\delta_4}\}.
\end{align*}
This is a set depending on $m$ as $\overline t$ relies on $m$.
We will show that, to control \eqref{e205}, we only need to consider  $(\xi,\,\eta)\in S_m$. 

\begin{prop}\label{t35}
    
    For $\la\ge 1$ and $R\ge 1$, we have
    \begin{align*}
    &\int_{|B_R\xi|\le\la^{1+\delta_3}}\int_{|\eta|\le\la^{1+\delta_3}}\big|\widehat{D_{B_Rs}g}_{1,m_1}(\xi)\big|\big|\widehat{D_{M(\overline{t})s}g}_{2,m_2}(\eta)\ \big|\chi_{(S_m)^c}(\xi,\,\eta)\\&
    \Big|\int_{\bbr^4}e^{2\pi i((x+B_Rt)\cdot\xi+(x+P(t))\cdot\eta)}\zeta_m(x,\,t)dxdt\Big|d\eta d\xi\\
    \lesssim_{\delta_3,\delta_4,N}& R^{-3}A^{O(1)}\la^{-N}.
\end{align*}
\end{prop}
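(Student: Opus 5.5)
Outside $S_m$ the oscillatory integral $I(\xi,\eta)=\int_{\bbr^4}e^{2\pi i((x+B_Rt)\cdot\xi+(x+P(t))\cdot\eta)}\zeta_m(x,t)\,dx\,dt$ should be negligible by non-stationary phase (integration by parts) in $x$ or in $t$; the Fourier coefficients then only need crude $L^\infty$ and support bounds. The phase is $\Phi(x,t)=x\cdot(\xi+\eta)+B_Rt\cdot\xi+P(t)\cdot\eta$, with
\[
\nabla_x\Phi=\xi+\eta,\qquad
\partial_{t_j}\Phi=\xi_j/R^{j}+2t_j\eta_1+3t_j^2\eta_2 .
\]
The amplitude $\zeta_m$ lives on a box of size $\sim AR^{-1}\la^{-\gamma}\times AR^{-2}\la^{-\gamma}\times A\la^{-\gamma}\times A\la^{-\gamma}$ (Lemma~\ref{t30}). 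It satisfies $|\partial_{x_1}^{a}\partial_{x_2}^{b}\partial_t^{\beta}\zeta_m|\lesssim A^{O(|\beta|)}(R\la^{\gamma})^{a}(R^2\la^{\gamma})^{b}\la^{\gamma|\beta|}$. Here I use $\gamma<1$, $\|\partial^\alpha r\|\le A^{|\alpha|}\le\la^{\ep'|\alpha|}$, and that $\la^\gamma P(t)$ has $t$-derivatives $\lesssim\la^\gamma$ on $[0,1]^2$.

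I split $S_m^c\cap\{|B_R\xi|,|\eta|\le\la^{1+\delta_3}\}$ into three cases.

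\emph{Case 1: $|\xi_1+\eta_1|>R\la^{\gamma+\delta_4}$ or $|\xi_2+\eta_2|>R^2\la^{\gamma+\delta_4}$.} Integrate by parts $N'$ times in $x_1$ (resp.\ $x_2$). Each step gains $R\la^\gamma/|\xi_1+\eta_1|\le\la^{-\delta_4}$ (resp.\ $R^2\la^\gamma/|\xi_2+\eta_2|$), so $|I|\lesssim |\operatorname{supp}\zeta_m|\,\la^{-\delta_4N'}$.

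\emph{Case 2: Case 1 fails, but $|\partial_{t_j}\Phi(\overline t)|>\la^{\gamma+\delta_4}$ for some $j$.} First control the variation of $\partial_t\Phi$ on the support. For $|t-\overline t|\lesssim A\la^{-\gamma}$ and $|\eta|\le\la^{1+\delta_3}$,
\[
|\partial_{t_j}\Phi(t)-\partial_{t_j}\Phi(\overline t)|\lesssim A\la^{1+\delta_3-\gamma}.
\]
This is $\le\tfrac12\la^{\gamma+\delta_4}$ provided $\gamma>\tfrac12$ and $\delta_3,\ep'$ are small compared with $2\gamma-1+\delta_4$. This compatibility of parameters is the main point to check, and it constrains the choices of $\gamma,\delta_3,\delta_4,\ep'$. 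Then $|\partial_{t_j}\Phi|\gtrsim\la^{\gamma+\delta_4}$ on the whole support.

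Next I integrate by parts in $t_j$ with the operator $L=(2\pi i\,\partial_{t_j}\Phi)^{-1}\partial_{t_j}$. Each application of $L^{*}$ costs at most $\la^\gamma A^{O(1)}$ from derivatives of $\zeta_m$. It also costs $|\partial^2_{t_j}\Phi|/|\partial_{t_j}\Phi|\lesssim\la^{1+\delta_3}/\la^{\gamma+\delta_4}$, and higher derivatives of $\Phi$ are similarly bounded by $\la^{1+\delta_3}$. Since $1+\delta_3-\gamma-\delta_4<\gamma+\delta_4$, each step gains a fixed power $\la^{-c}$ with $c=\min(\delta_4,\,2\gamma-1+2\delta_4-\delta_3)-O(\ep')>0$. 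Repeating $N'$ times gives $|I|\lesssim A^{O(N')}|\operatorname{supp}\zeta_m|\,\la^{-cN'}$.

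\emph{Case 3: the set $\{|\xi|\ \text{large}\}$.} The remaining region is empty of further cases: on $S_m^c$ one of the two alternatives above holds.

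To finish, bound $|\widehat{D_{B_Rs}g_{1,m_1}}|\le\|D_{B_Rs}g_{1,m_1}\|_1\lesssim R^{-3}\la^{-2\gamma}$ and $|\widehat{D_{M(\overline t)s}g_{2,m_2}}|\lesssim\la^{-2\gamma}$. These use $\|g_{j,m}\|_\infty\lesssim1$ and the support sizes. Also use $|\operatorname{supp}\zeta_m|\lesssim A^4R^{-3}\la^{-4\gamma}$. The $(\xi,\eta)$-region has measure $\lesssim R^3\la^{4(1+\delta_3)}$, so all polynomial factors in $\la$ and $A\le\la^{\ep'}$ are absorbed by choosing $N'$ large in terms of $N,\delta_3,\delta_4$. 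This leaves $R^{-3}A^{O(1)}\la^{-N}$, with the $R$-powers matching as claimed.
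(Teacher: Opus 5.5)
Your proposal is correct and follows the paper's proof. The paper likewise shows by integration by parts that the inner oscillatory integral is $O(A^{O(1)}R^{-3}\la^{-K})$ off $S_m$, and combines this with the trivial bounds $\|\widehat{D_{B_Rs}g_{1,m_1}}\|_\infty\lesssim R^{-3}\la^{-2\gamma}$, $\|\widehat{D_{M(\overline t)s}g_{2,m_2}}\|_\infty\lesssim \la^{-2\gamma}$ and the volume $R^3\la^{4+4\delta_3}$ of the frequency region. Your split into integration by parts in $x$ (when $\xi+\eta$ is large) and in $t$ (when $\nabla_t\Phi(\overline t)$ is large, using $\gamma>\tfrac12$ and small $\delta_3,\ep'$ to keep $\partial_{t_j}\Phi$ nonvanishing on the $A\la^{-\gamma}$-neighbourhood of $\overline t$) spells out details the paper leaves implicit.
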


\begin{proof}

 Actually, by integration by parts, we can show that for 
 $$
 \{(\xi,\,\eta)\in\bbr^4:\,|B_R\xi|\le \lambda^{1+\delta_3},\,|\eta|\le\lambda^{1+\delta_3}\}\setminus S_m,
 $$
 we have
\begin{equation}\label{e074}
     \Big|\int_{\bbr^4}e^{2\pi i((x+B_Rt)\cdot\xi+(x+P(t))\cdot\eta)}\zeta_m(x,\,t)dxdt\Big|\lesssim_{K,\delta_4}A^{O_{K,\delta_4}(1)}R^{-3}\la^{-K}.
\end{equation}

By the size of the support of $D_sg_{j,m_j}$ and the fact that $\|D_sg_{j,m_j}\|_\infty\le1$, we have

$$
\|\widehat{D_{B_Rs}g}_{1,m_1}\|_\infty\le\|D_{B_Rs}g_{1,m_1}\|_1\lesssim R^{-3}\la^{-2\gamma}
$$
and
$$
\|\widehat{D_{M(\overline{t})s}g}_{2,m_2}\|_\infty\le\|D_{M(\overline{t})s}g_{2,m_2}\|_1\le\la^{-2\gamma}.
$$
Combing this with \eqref{e074}, we have
\begin{align*}
    &\int_{|B_R\xi|\le\la^{1+\delta_3}}\int_{|\eta|\le\la^{1+\delta_3}}\big|\widehat{D_{B_Rs}g}_{1,m_1}(\xi)\big|\big|\widehat{D_{M(\overline{t})s}g}_{2,m_2}(\eta)\ \big|d\xi d\eta\\
    \lesssim&R^3\la^{4+4\delta_3}\|\widehat{D_{B_Rs}g}_{1,m_1}\|_\infty\|\widehat{D_{M(\overline{t})s}g}_{2,m_2}\|_\infty\\
    \le&\la^{4-4\gamma+4\delta_3}.
\end{align*}
So we complete the proof.
\end{proof}

\medskip

 To estimate \eqref{e205}, by Proposition~\ref{t35}, it remains to control
\begin{align*}
  &A\lambda^{\gamma+\de_2}\Big(\int_{
				 |s|\lesssim \lambda^{-\gamma}
		}\sum_{m\in J}\big|\int_{(\xi,\eta)\in S_m}\widehat{D_{B_Rs}g}_{1,m_1}(\xi)\widehat{D_{M(\overline{t})s}g}_{2,m_2}(\eta)\\
    &\qquad \int_{\bbr^4}e^{2\pi i((x+B_Rt)\cdot\xi+(x+P(t))\cdot\eta)}\zeta_m(x,\,t)dxdtd\eta d\xi\big|ds\Big)^{\frac12},
\end{align*}
which is bounded by
\begin{equation}\label{e29}
    A^3R^{-3/2}\la^{-\gamma+\delta_2}\Big(\int_{
				 |s|\lesssim \lambda^{-\gamma}
		}\sum_{m\in J}\int_{(\xi,\eta)\in S_m}\big|\widehat{D_{B_Rs}g}_{1,m_1}(\xi)\big|\big|\widehat{D_{M(\overline{t})s}g}_{2,m_2}(\eta)\big|d\eta d\xi ds\Big)^{\frac12},
\end{equation}
where we use that  $|\spt \zeta_m|\lesssim O(A^4R^{-3}\la^{-4\gamma})$,

Let
$$
S_m'= S_m\bigcap \{(\xi,\,\eta):\ |\eta|\le \la^{\gamma+\kappa}/2\},
$$
where $\kappa\ge 100\de_4$ is a sufficiently small constant.
We remark that, 
for $(\xi,\,\eta)\in S_m'$, 
we have 
\begin{equation}\label{e090}
|B_R\xi|\le\frac{\la^{\gamma+\kappa}}{2R}+\la^{\gamma+\delta_4}\le\la^{\la+\kappa}
\end{equation}
for $\la$ large enough.
$$
B_m=\{\eta:\,\exists\xi \text{ such that }(\xi,\eta)\in S_m\}\sset \bbr^2,
$$
$$
B_m'= B_m\bigcap B(0,\la^{\gamma+\kappa}/2).
$$
and 
$$
J_{\eta,m_2}=\{m_1:\,\eta\in B_{(m_1,m_2)}\}.
$$

We will consider the cases $(\xi,\eta)\in S_m\setminus S_m'$ and $(\xi,\eta)\in S_m'$ separately.

\begin{prop}\label{t66}
For $\la\ge 1$, and $R\ge 1$, we have
\begin{align}
        &A^3R^{-3/2}\la^{-\gamma+\delta_2}\Big(\int_{
				 |s|\lesssim \lambda^{-\gamma}
		}\sum_{m\in J}\int_{S_m\setminus S_m'}\big|\widehat{D_{B_Rs}g}_{1,m_1}(\xi)\big|\big|\widehat{D_{M(\overline{t})s}g}_{2,m_2}(\eta)\big|d\eta d\xi ds\Big)^{\frac12}\nonumber\\
        \lesssim & A^{O(1)}\lambda^{-\frac{\kappa-9\delta_4}{8}+\delta_2},\label{e214}
    \end{align}
    where the implicit constant is independent of $R$.

\end{prop}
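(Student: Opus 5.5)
The plan is to exploit that on $S_m\setminus S_m'$ the frequency $\eta$ is large, $|\eta|\ge \la^{\gamma+\kappa}/2$, while the constraints defining $S_m$ pin $\eta$ to a very thin set for each fixed $m$. I then combine this thinness with the smallness of $\wh{D_{M(\overline t)s}g_{2,m_2}}$ in $L^2$ coming from \eqref{e47}.

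\emph{Step 1: localize $\eta$.} Eliminate $\xi$ in the definition of $S_m$. From $|\xi_1+\eta_1|\le R\la^{\gamma+\de_4}$ and $|\xi_1/R+2\overline{t_1}\eta_1+3\overline{t_1}^2\eta_2|\le\la^{\gamma+\de_4}$ we get
$$|(2\overline{t_1}-1/R)\eta_1+3\overline{t_1}^2\eta_2|\lesssim\la^{\gamma+\de_4},$$
and similarly
$$|2\overline{t_2}\eta_1+(3\overline{t_2}^2-1/R^2)\eta_2|\lesssim\la^{\gamma+\de_4}.$$
The matrix of this system is exactly the one in \eqref{e91}, so its determinant is at least $A^{-1}$. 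By Lemma~\ref{t5} we get $|\eta|\lesssim A\la^{\gamma+\de_4}$ on $B_m$. Thus $S_m\setminus S_m'$ is empty unless $A\la^{\de_4}\gtrsim\la^{\kappa}$. Since $A\le\la^{\ep'}$, this makes the whole term vanish once $\kappa>\ep'+\de_4$.

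If one does not want to rely on that restriction on $A$, use instead the rough version. For fixed $m$ and $\xi$ the admissible $\eta$ lie in a parallelogram of area $\lesssim A^{2}\la^{2\gamma+2\de_4}$, and for fixed $\eta$ the admissible $\xi$ lie in a box of area $R^3\la^{2\gamma+2\de_4}$. So $|S_m|\lesssim A^2R^3\la^{4\gamma+4\de_4}$, while $|\eta|\ge\la^{\gamma+\kappa}/2$ forces a loss.

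\emph{Step 2: Cauchy--Schwarz.} In the case that survives, bound the integral over $S_m\setminus S_m'$ by
$$\|\wh{D_{B_Rs}g_{1,m_1}}\|_\infty\cdot|\{\xi\}\text{-fiber}|\cdot\int_{B_m\setminus B(0,\la^{\gamma+\kappa}/2)}|\wh{D_{M(\overline t)s}g_{2,m_2}}(\eta)|\,d\eta,$$
using $\|\wh{D_{B_Rs}g_{1,m_1}}\|_\infty\lesssim R^{-3}\la^{-2\gamma}$. The $\xi$-fiber has measure $\lesssim R^3\la^{2\gamma+2\de_4}$, so these factors combine to $\la^{2\de_4}$. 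Apply Cauchy--Schwarz to the $\eta$-integral against $|B_m|^{1/2}\lesssim A\la^{\gamma+\de_4}$, then sum over $m$ and integrate in $s$. A change of variables $s\mapsto M(\overline t)s$, with Jacobian $\gtrsim A^{-1}$ since $|\overline t_1-\overline t_2|>A^{-1}$, reduces the $L^2$ part to \eqref{e47} (or to the trivial Plancherel bound). The high-frequency restriction $|\eta|\ge\la^{\gamma+\kappa}$ gives the gain through the smoothness of $g_{2,m_2}$ at scale $\la^{-\gamma}$: its cut-off $\widetilde\eta_{2,m}$ and the factor $D$ contribute decay of the type $(\la^{\gamma}/|\eta|)^{1/2}$ after interpolating between the $L^\infty$ and $L^2$ bounds. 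Counting $|J|\lesssim R^3\la^{4\gamma}$ and the prefactor $A^3R^{-3/2}\la^{-\gamma+\de_2}$, the powers of $R$ and $\la^{\gamma}$ should cancel. This leaves $A^{O(1)}\la^{-(\kappa-9\de_4)/4}$ inside the square root, which gives \eqref{e214}.

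\emph{Main obstacle.} The delicate step is the bookkeeping of $R$ and $\la^\gamma$ powers so that the final bound is uniform in $R$. In particular, the $\xi$-fiber size $R^3$ must cancel exactly against $\|\wh{D_{B_Rs}g_{1,m_1}}\|_\infty$. I would first try to make Step 1 decisive, showing the set is empty for appropriate $\kappa$. I would fall back on the Cauchy--Schwarz counting only if the assumption $A\le\la^{\ep'}$ turns out to be unavailable at this point.
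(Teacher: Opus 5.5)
Your Step 1 is correct and already proves the proposition, by a route that is genuinely different from the paper's and shorter.

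\textbf{Why Step 1 works.} The two inequalities you get by eliminating $\xi$ are exactly the ones the paper writes in the proof of \eqref{e076}. Read them as a linear system in $\eta$ for the fixed point $\overline t=\overline t_m\in\operatorname{supp} r$. Its determinant is the quantity in \eqref{e91}, so Cramer's rule gives $B_m\subset B(0,CA\lambda^{\gamma+\delta_4})$ for every $m$. Cite Cramer's rule here rather than Lemma~\ref{t5}, which as stated controls eigenvalues, not $\|M^{-1}\|$. The proof of Theorem~\ref{t24} begins with the standing reduction $A\le\lambda^{\epsilon'}$. Under it, and with $\kappa>\epsilon'+\delta_4$ (true for the parameters in the final remark), $S_m\setminus S_m'$ is empty once $\lambda$ is large, so the left side of \eqref{e214} vanishes. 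Bounded $\lambda$ is covered by the trivial bound given below.

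\textbf{How the paper argues instead.} It reads the same two inequalities as quadratic sublevel conditions on $\overline t_{m,1},\overline t_{m,2}$ for a fixed large $\eta$. This traps $\overline t_m$ in $O(1)$ squares of side $O(\lambda^{(\gamma+\delta_4)/2}|\eta_2|^{-1/2})$ or $O(\lambda^{\gamma+\delta_4}|\eta_1|^{-1})$. Via \eqref{e82} this becomes the count \eqref{e076} of admissible $m_1$, and the proof closes with H\"older and the fiber bounds \eqref{e215}, \eqref{e216}. That counting gets decay directly from $|\eta|$ using only the curvature of $P$, and never invokes \eqref{e91} at $\overline t_m$. Your argument shows that, once \eqref{e91} is used, the region the paper works so hard on is essentially empty.

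\textbf{The fallback Step 2 should be discarded as written.} The function $g_{2,m_2}$ oscillates at frequency $\sim\lambda$: its Fourier support sits near the annulus of $f_2$, and one only has $\|\nabla g_{2,m_2}\|_\infty\lesssim\lambda$. So $\widehat{D_{M(\overline t)s}g_{2,m_2}}(\eta)$ need not decay on $\lambda^{\gamma+\kappa}\le|\eta|\le\lambda$, and the claimed factor $(\lambda^{\gamma}/|\eta|)^{1/2}$ has no basis.

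\textbf{It is also unnecessary.} Step 1 gives a dichotomy valid for all $A$: either every $S_m\setminus S_m'$ is empty, or $\lambda^{\kappa-\delta_4}\lesssim A$. In the second case use the trivial estimate, built from:
\begin{itemize}
\item Cauchy--Schwarz with the fiber bounds \eqref{e215}, \eqref{e216}, and Plancherel;
\item $\|D_{B_Rs}g_{1,m_1}\|_2^2\lesssim R^{-3}\lambda^{-2\gamma}$ and $\|D_{M(\overline t)s}g_{2,m_2}\|_2^2\lesssim\lambda^{-2\gamma}$;
\item $\#J\lesssim R^3\lambda^{4\gamma}$, and the $s$-region has measure $\lesssim\lambda^{-2\gamma}$.
\end{itemize}
Together these bound the left side of \eqref{e214} by $A^{O(1)}\lambda^{\delta_2+\delta_4}$, uniformly in $R$. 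Finally,
\[
\lambda^{\delta_4}=\lambda^{-(\kappa-9\delta_4)/8}\,\lambda^{(\kappa-\delta_4)/8}\lesssim A^{1/8}\lambda^{-(\kappa-9\delta_4)/8}.
\]
This gives \eqref{e214} for all $A,R,\lambda\ge 1$ without appealing to $A\le\lambda^{\epsilon'}$.
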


To obtain an effective control in this case, we need to bound the number of $m_1$ involved when $\eta$ and $m_2$ are fixed. 

\begin{lm}
    Fix $\eta$ with $|\eta|\ge \la^{\gamma+\kappa}/2$ and $m_2\in J_2$. 
Then
\begin{equation}\label{e076}
    |J_{\eta,m_2}|\lesssim
    R^3+R^3\la^{2\gamma-\frac{\kappa-\delta_4}{2}}.
\end{equation}
\end{lm}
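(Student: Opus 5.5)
The plan is to turn the condition $m_1\in J_{\eta,m_2}$ into a condition on the base point $\overline t=\overline t_m$, show that $\overline t$ must lie in a small sublevel set of two quadratics, and then count the lattice points $m_1$ compatible with such $\overline t$.

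\emph{Step 1: reduce to conditions on $\overline t$.} Suppose $\eta\in B_{(m_1,m_2)}$, and pick $\xi$ with $(\xi,\eta)\in S_m$. The constraints $|\xi_1+\eta_1|\le R\la^{\gamma+\delta_4}$ and $|\xi_2+\eta_2|\le R^2\la^{\gamma+\delta_4}$ give
$$\xi_1/R=-\eta_1/R+O(\la^{\gamma+\delta_4}),\qquad \xi_2/R^2=-\eta_2/R^2+O(\la^{\gamma+\delta_4}).$$
Substituting these into the last two constraints defining $S_m$ eliminates $\xi$. We obtain
$$|q_1(\overline t_1)|:=\big|3\eta_2\overline t_1^{\,2}+2\eta_1\overline t_1-\eta_1/R\big|\lesssim\la^{\gamma+\delta_4},\qquad |q_2(\overline t_2)|:=\big|3\eta_2\overline t_2^{\,2}+2\eta_1\overline t_2-\eta_2/R^2\big|\lesssim\la^{\gamma+\delta_4}.$$
Each $q_i$ is a polynomial of degree at most $2$ on $[0,1]$. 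Its non-constant coefficients $(3\eta_2,2\eta_1)$ have size $\sim|\eta|\gtrsim\la^{\gamma+\kappa}$.

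\emph{Step 2: sublevel set estimate.} I will use the standard bound for polynomials of bounded degree: if some non-constant coefficient has size $\ge M$, then
$$|\{\tau\in[0,1]:|q(\tau)|\le\varepsilon\}|\lesssim(\varepsilon/M)^{1/2}.$$
Moreover, this set is a union of at most two intervals. Applying it with $\varepsilon=\la^{\gamma+\delta_4}$ and $M\sim\la^{\gamma+\kappa}$ shows that $\overline t_1$ and $\overline t_2$ each lie in $O(1)$ intervals of length $\ell:=\la^{-(\kappa-\delta_4)/2}$.

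\emph{Step 3: count $m_1$ from $\overline t$.} Let $(x,t)\in A_m$. Subtracting the two relations in \eqref{e10} gives, coordinatewise,
$$m_{1,1}=Rm_{2,1}+\la^{\gamma}R\big(t_1/R-P_1(t)\big)+O(1),\qquad m_{1,2}=R^2m_{2,2}+\la^{\gamma}R^2\big(t_2/R^2-P_2(t)\big)+O(1).$$
Replacing $t$ by $\overline t$ costs $|t-\overline t|\lesssim A\la^{-\gamma}$. So with $m_2$ fixed, $m_1$ is determined by $\overline t$ up to $O(A^{O(1)})$ lattice units in each coordinate.

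The map $\overline t\mapsto m_1$ has first row of size $\lesssim R\la^\gamma$ and second row of size $\lesssim R^2\la^\gamma$. Hence the image of one $\ell\times\ell$ box lies in a rectangle of size $O(R\la^\gamma\ell+A)\times O(R^2\la^\gamma\ell+A)$. This rectangle contains
$$\lesssim A^{O(1)}\big(1+R\la^\gamma\ell\big)\big(1+R^2\la^\gamma\ell\big)$$
lattice points. Using $\la^\gamma\ell\le 1+\la^{2\gamma}\ell$ and $\ell^2\le\ell$, this is $\lesssim A^{O(1)}\big(R^3+R^3\la^{2\gamma-(\kappa-\delta_4)/2}\big)$, which is \eqref{e076}.

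\emph{Main obstacle.} The step I expect to be delicate is the bookkeeping in Step 3. The issue is that $\overline t_m$ is one chosen representative of $A_m$, and by Lemma~\ref{t30} the set $A_m$ may split into up to six boxes. I would handle this by fixing one box, as the paper already does, and arguing per box. I would absorb the resulting $A^{O(1)}$ losses into the implicit constant, since $A\le\la^{\ep'}$ and this factor is harmless in Proposition~\ref{t66}. Uniformity in $R$ comes for free, because $R$ only enters as the anisotropic scaling factors $R$ and $R^2$ in the lattice count.
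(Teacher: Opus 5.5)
Your route is essentially the paper's. You eliminate $\xi$ using \eqref{e210}--\eqref{e211}, localize $\overline t_m$ by a sublevel-set bound for the two quadratics, and count $m_1$ through $\Psi(\overline t_m)=\la^{-\gamma}(B_Rm_1-m_2)+O(\la^{-\gamma})$, which is the paper's \eqref{e081}--\eqref{e82}. The one difference is that your single Remez-type bound replaces the paper's case split. The paper treats $|\eta_2|\gtrsim\la^{\gamma+\kappa}$ separately (quadratic case, intervals of length $(\la^{\gamma+\delta_4}/|\eta_2|)^{1/2}$) from $|\eta_1|\gtrsim\la^{\gamma+\kappa}$ with $\eta_2$ small (essentially linear, length $\la^{\gamma+\delta_4}/|\eta_1|$). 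Your unified bound is fine.

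The bookkeeping in Step 3 is wrong as written, although the conclusion survives. When you subtract the two relations in \eqref{e10}, the $O(\la^{-\gamma})$ error from the cube of $m_2$ is multiplied by $R\la^{\gamma}$, resp.\ $R^2\la^{\gamma}$. So the correct relations are
\[
m_{1,1}=Rm_{2,1}+R\la^{\gamma}\bigl(t_1/R-P_1(t)\bigr)+O(R),\qquad m_{1,2}=R^2m_{2,2}+R^2\la^{\gamma}\bigl(t_2/R^2-P_2(t)\bigr)+O(R^2).
\]
For the same reason, replacing $t$ by $\overline t$ costs $O(RA)$ and $O(R^2A)$, not $O(A^{O(1)})$. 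Thus $\overline t$ pins down $m_1$ only up to $O(R)\times O(R^2)$ lattice points, and this is exactly where the $R^3$ term in \eqref{e076} comes from. The correct count is $\lesssim (R\la^\gamma\ell+R)(R^2\la^\gamma\ell+R^2)\lesssim R^3(1+\la^\gamma\ell)^2$. This still gives \eqref{e076}, in fact with $2\gamma-(\kappa-\delta_4)$ in the exponent.

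The detour through an arbitrary $(x,t)\in A_m$ is also unnecessary. By definition $\overline t_m$ is the $t$-component of a point $(\overline x,\overline t_m)\in A_m$, so you can apply \eqref{e10} at that point directly. This removes the six-box worry and eliminates the $A^{O(1)}$ loss entirely. You should do this rather than ``absorb'' $A^{O(1)}$ into the implicit constant. The lemma is stated, and proved in the paper, with a constant independent of $A$, even though the weaker bound would still suffice for Proposition~\ref{t66}.
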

\begin{proof}
For $(\xi,\,\eta)\in S_m$, we have
\begin{equation}
          |\xi_1+\eta_1|\le R\la^{\gamma+\delta_4},\,|\xi_2+\eta_2|\le R^2\la^{\gamma+\delta_4}\label{e210}
  \end{equation}      
  and 
        \begin{equation}
        |\xi_1/R+2\overline{t_1}\eta_{1}+3\overline{t_1}^2\eta_{2}|\le \la^{\gamma+\delta_4},\,
        |\xi_{2}/R^2+2\overline{t_2}\eta_{1}+3\overline{t_2}^2\eta_{2}|\le \la^{\gamma+\delta_4}.\label{e211}
\end{equation}
Therefore, for $\eta\in B_m$, we have
$$
|-\eta_1/R+2\overline{t_1}\eta_{1}+3\overline{t_1}^2\eta_{2}|\lesssim\la^{\gamma+\delta_4},
$$
and
$$
|-\eta_{2}/R^2+2\overline{t_2}\eta_{1}+3\overline{t_2}^2\eta_{2}|\lesssim \la^{\gamma+\delta_4}.
$$
For $\eta_2\neq0$, the set
\begin{equation}\label{e208}
    \{\overline t_{m,1}:|-\eta_1/R+2\overline{t_1}\eta_{1}+3\overline{t_1}^2\eta_{2}|\lesssim \la^{\gamma+\delta_4}\}
\end{equation}
and the set 
\begin{equation}\label{e209}
    \{\overline t_{m,2}:|-\eta_{2}/R^2+2\overline{t_2}\eta_{1}+3\overline{t_2}^2\eta_{2}|\lesssim \la^{\gamma+\delta_4}\}
\end{equation}
are contained in at most two intervals of length $O(\la^{\frac{\gamma+\delta_4}{2}}|\eta_{2}|^{-\frac{1}{2}})$.  In particular, $\overline t_m$ is contained in four squares of length $O(\la^{\frac{\delta_4+\gamma}{2}}|\eta_{2}|^{-\frac{1}{2}})$. 
Recalling that $(\overline x,\,\overline t_m)\in A_m$, we  can use the supports of $g_{1,m_1}$ and $g_{2,m_2}$ to show that 
\begin{equation}\label{e081}
    P(\overline t_m)-B_R(\overline{t_m})=\lambda^{-\gamma}( m_2-B_R(m_1))+O(\la^{-\gamma}).
\end{equation}
Let $n=B_R(m_1)-m_2\in\bbr^2,$ and 
\begin{equation}\label{e084}
\Psi(t)=B_R(t)-P(t) 
\end{equation}
a function from $\bbr^2$ to $\bbr^2$. Because $\|J(\Psi)\|_\infty\lesssim1$, we can use the mean value theorem to obtain
\begin{equation}\label{e82}
    |n-n'|\lesssim\lambda^\gamma|\overline t_m-\overline t_{m'}|+O(1).
\end{equation}
Therefore, when $|\eta_2|\ge\frac{\la^{\gamma+\kappa}}{20}$,  we verify \eqref{e076} from the range of $\overline t_m$.

When $|\eta_{2}|\le\frac{\la^{\kappa+\gamma}}{20}$, we have $|\eta_{1}|\ge\frac{\la^{\kappa+\gamma}}{5}$, 
which implies that the set \eqref{e208} and \eqref{e209} are contained in at most two intervals of length 
$O(\la^{\delta_4+\gamma}|\eta_{1}|^{-1})$.
This implies  \eqref{e076}  for a fixed $m_2$.
 We finish the proof.

\end{proof}

\begin{proof}[Proof of Proposition~\ref{t66}]
     By \eqref{e210}, for fixed $\eta$ and $m$, we have
     \begin{equation}\label{e215}
         \big|\{\xi:\,(\xi,\eta)\in S_m\}\big|\lesssim R^3\la^{2\gamma+2\delta_4}
     \end{equation}

     For fixed $\xi$ and $ m$, because 
$$
\Big|\det\begin{bmatrix}
    2\overline t_1 &3\overline t_1^2\\
    2\overline t_2 &3\overline t_2^2
\end{bmatrix}\Big|=6|\overline t_1\overline t_2(\overline t_1-\overline t_2)|\gtrsim A^{-3},
$$
we obtain from \eqref{e211} that
\begin{equation}\label{e216}
    \big|\{\eta:\,(\xi,\eta)\in S_m\}\big|\lesssim A^{O(1)}\la^{2\gamma+2\delta_4}.
\end{equation}
These estimates combined with \eqref{e076} and the H\"older inequality imply that, for a fixed $|s|\lesssim \lambda^{-\gamma}$, 
\begin{align*}
    &\sum_{m\in J}\int_{ S_m\setminus S_m'}\big|\widehat{D_{B_Rs}g}_{1,m_1}(\xi)\big|\big|\widehat{D_{M(\overline{t})s}g}_{2,m_2}(\eta)\big|d\eta d\xi\\
    \le&\sum_{m\in J}\Big(\int_{ S_m\setminus S_m'}\big|\widehat{D_{B_Rs}g}_{1,m_1}(\xi)\big|^2 d\eta d\xi\Big)^{\frac12}\\
    &\Big(\int_{(\xi,\eta)\in S_m\setminus S_m'}\big|\widehat{D_{M(\overline{t})s}g}_{2,m_2}(\eta)\big|^2d\eta d\xi\Big)^{\frac{1}{2}}\\
    \lesssim&A^{O(1)}R^{\frac{3}{2}}\la^{2\gamma+2\delta_4}\sum_{m\in J}\|D_{B_Rs}g_{1,m_1}\|_2\Big(\int_{ B_m\setminus B_m'}\big|\widehat{D_{M(\overline{t})s}g}_{2,m_2}(\eta)\big|^2d\eta \Big)^{\frac{1}{2}}\\
    \lesssim&A^{O(1)}R^{\frac{3}{2}}\la^{3\gamma+2\delta_4}\Big(\sum_{m_2\in J_2}\int_{|\eta|\ge\frac{\la^{\gamma+\kappa}}{2}}\sum_{m_1\in J_{\eta,m_2}}\big|\widehat{D_{M(\overline{t})s}g}_{2,m_2}(\eta)\big|^2d\eta \Big)^{\frac{1}{2}}\\
    \lesssim&A^{O(1)}R^{3}
    \la^{4\gamma-\frac{\kappa-9\delta_4}{4}}
    \Big(\sum_{m_2\in J_2}\|D_{M(\overline{t})s}g_{2,m_2}\|_2^2\Big)^{\frac{1}{2}}\\
    \lesssim&A^{O(1)}R^{3}\la^{4\gamma-\frac{\kappa-9\delta_4}{4}},
\end{align*}
which implies \eqref{e214}.
\end{proof}
So to estimate \eqref{e29}, we only need to bound 
\begin{align}
    &A^3R^{-3/2}\la^{-\gamma+\delta_2}\nonumber\\
    &\Big(\int_{
				 |s|\lesssim \lambda^{-\gamma}
		}\sum_{m\in J}\int_{(\xi,\eta)\in S_m'}\big|\widehat{D_{B_Rs}g}_{1,m_1}(\xi)\big|\big|\widehat{D_{M(\overline{t})s}g}_{2,m_2}(\eta)\big|d\eta d\xi ds\big)^{\frac12}.\label{e78}
\end{align}

\begin{lm}\label{t67}
For $\la\ge 1$ and $R\ge 1$, we have
     \begin{align}
          &A^3R^{-3/2}\la^{-\gamma+\delta_2}\nonumber\\
          &\Big(\int_{
				 |s|\lesssim \lambda^{-\gamma}
		}\sum_{m\in J}\int_{(\xi,\eta)\in S_m'}\big|\widehat{D_{B_Rs}g}_{1,m_1}(\xi)\big|\big|\widehat{D_{M(\overline{t})s}g}_{2,m_2}(\eta)\big|d\eta d\xi ds\big)^{\frac12}\nonumber\\
        \lesssim &A^{O(1)}R^{\frac34}\lambda^{\frac\gamma2+\delta_4+\delta_2}\Big(\sum_{m_1\in J_1}\int_{
				 \bbr^2
		}\int_{|B_R\xi|\lesssim\la^{\gamma+\kappa}}\big|\widehat{D_{s}g}_{1,m_1}(\xi)\big|^2d\xi ds\Big)^{\frac14}\label{e222}
     \end{align}
     and 
     \begin{align}
          &A^3R^{-3/2}\la^{-\gamma+\delta_2}\nonumber\\
          &\Big(\int_{
				 |s|\lesssim \lambda^{-\gamma}
		}\sum_{m\in J}\int_{(\xi,\eta)\in S_m'}\big|\widehat{D_{B_Rs}g}_{1,m_1}(\xi)\big|\big|\widehat{D_{M(\overline{t})s}g}_{2,m_2}(\eta)\big|d\eta d\xi ds\big)^{\frac12}\nonumber\\
        \lesssim&A^{O(1)}\lambda^{\frac\gamma2+\delta_4+\delta_2}\Big(\sum_{m_2\in J_2}\int_{
				\bbr^2
		}\int_{|\eta|\lesssim\la^{\gamma+\kappa}}\big|\widehat{D_{s}g}_{2,m_2}(\eta)\big|^2d\eta ds\Big)^{\frac14}.\label{e223}
     \end{align}
\end{lm}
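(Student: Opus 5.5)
The plan is to bound the inner $(\xi,\eta)$-integral over $S_m'$ by Cauchy--Schwarz, then sum over $m$ and integrate in $s$. Each of the two inequalities keeps one factor in the $\flat$-type form $\widehat{D_s g_{j,m_j}}$ and disposes of the other with a trivial $L^2$ bound. I describe \eqref{e222} in detail; \eqref{e223} follows by exchanging the roles of the two functions.

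\emph{Step 1 (Cauchy--Schwarz on $S_m'$).} For fixed $s$ and $m$, apply Cauchy--Schwarz in $(\xi,\eta)$ over $S_m'$:
\begin{align*}
&\int_{S_m'}\big|\widehat{D_{B_Rs}g}_{1,m_1}(\xi)\big|\big|\widehat{D_{M(\overline t)s}g}_{2,m_2}(\eta)\big|d\eta d\xi\\
&\le\Big(\int_{S_m'}|\widehat{D_{B_Rs}g}_{1,m_1}(\xi)|^2\Big)^{1/2}\Big(\int_{S_m'}|\widehat{D_{M(\overline t)s}g}_{2,m_2}(\eta)|^2\Big)^{1/2}.
\end{align*}
By \eqref{e216}, for fixed $\xi$ the $\eta$-fibre of $S_m$ has measure at most $A^{O(1)}\la^{2\gamma+2\delta_4}$. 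By \eqref{e090}, on $S_m'$ we have $|B_R\xi|\lesssim\la^{\gamma+\kappa}$. Hence the first factor is at most
$$
A^{O(1)}\la^{\gamma+\delta_4}\Big(\int_{|B_R\xi|\lesssim\la^{\gamma+\kappa}}|\widehat{D_{B_Rs}g}_{1,m_1}|^2d\xi\Big)^{1/2}.
$$
By \eqref{e215} and Plancherel, the second factor is at most
$$
R^{3/2}\la^{\gamma+\delta_4}\|D_{M(\overline t)s}g_{2,m_2}\|_2\lesssim R^{3/2}\la^{\gamma+\delta_4}\la^{-\gamma}.
$$
Here we use $|\spt g_{2,m_2}|\lesssim\la^{-2\gamma}$ and $\|g_{2,m_2}\|_\infty\lesssim1$.

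\emph{Step 2 (sum over $m$ and integrate in $s$).} Sum over $m_2\in J_2$ and use only $|J_2|\lesssim\la^{2\gamma}$. Then apply Cauchy--Schwarz in $(m_1,s)$ over $J_1\times\{|s|\lesssim\la^{-\gamma}\}$, a set of measure $\lesssim R^3\la^{2\gamma}\cdot\la^{-2\gamma}=R^3$. This gives
$$
\Big(\sum_{m_1\in J_1}\int_{|s|\lesssim\la^{-\gamma}}\int_{|B_R\xi|\lesssim\la^{\gamma+\kappa}}|\widehat{D_{B_Rs}g}_{1,m_1}|^2d\xi ds\Big)^{1/2}.
$$
Finally, change variables $s\mapsto B_R^{-1}s$, which has Jacobian $R^3$, to replace $D_{B_Rs}$ by $D_s$.

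\emph{Main obstacle: the bookkeeping of powers of $R$ and $\la$.} I expect the exponents to come out as follows, and this is the step that must be checked carefully.
\begin{itemize}
\item The prefactor $A^3R^{-3/2}\la^{-\gamma+\delta_2}$ is raised against the square root of the bracket.
\item The bracket contributes $\la^{2\gamma+2\delta_4}\la^{2\gamma}\la^{-\gamma}$ from the fibre measures and the count of $m_2$.
\item The Cauchy--Schwarz in $(m_1,s)$ contributes $R^{3/2}$.
\item The $\xi$-integral contributes a power of $R$ from the change of variables.
\end{itemize}
The target exponents are $R^{3/4}$ and $\la^{\gamma/2+\delta_4+\delta_2}$, with the fourth root on the $\flat$ quantity. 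If the count of $m_2$ proves too lossy, I would instead use that for fixed $m_1$ and $\overline t$ only $O(A^{O(1)})$ values of $m_2$ are compatible. This follows from \eqref{e081}, since $m_2$ is determined by $m_1$ and $\overline t_m$ up to $O(A)$, and $\overline t_m$ ranges over a set of diameter $O(A\la^{-\gamma})$ by Lemma~\ref{t30}. Using this replaces the factor $\la^{2\gamma}$ by $A^{O(1)}$ and yields the stated power.

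For \eqref{e223}, swap the roles. Use \eqref{e215} on the $\eta$-integration of $|\widehat{D_{M(\overline t)s}g}_{2,m_2}|^2$ restricted to $|\eta|\le\la^{\gamma+\kappa}/2$. Bound $\|D_{B_Rs}g_{1,m_1}\|_2\lesssim R^{-3/2}\la^{-\gamma}$ trivially. Count the $m_1$ compatible with a given $m_2$ via \eqref{e081}. Finally, change variables $s\mapsto M(\overline t)^{-1}s$; this costs $|\det M(\overline t)|^{-1}\lesssim A^{O(1)}$ because $\overline t_1\overline t_2(\overline t_1-\overline t_2)\gtrsim A^{-3}$, and it converts $D_{M(\overline t)s}$ into $D_s$. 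Here the $R$-powers cancel, matching the absence of $R$ on the right side of \eqref{e223}.
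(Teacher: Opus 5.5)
Your argument is correct and is essentially the paper's. Both use Cauchy--Schwarz on $S_m'$ with the fibre bounds \eqref{e215}, \eqref{e216} and \eqref{e090}, a trivial Plancherel bound on the factor you do not keep, the crude counts $|J_1|\lesssim R^3\la^{2\gamma}$ and $|J_2|\lesssim\la^{2\gamma}$, and the changes of variables $B_Rs\to s$ and $M(\overline t)s\to s$; the paper applies H\"older globally to reach \eqref{e219} and only then uses \eqref{e220} or \eqref{e221}, and this gives the identical bound.

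Your bookkeeping already closes with the crude count of $m_2$ and gives exactly $R^{3/4}\la^{\gamma/2+\delta_4+\delta_2}$, so drop the fallback. Its premise is false anyway: Lemma~\ref{t30} localizes $\overline t_m$ only for fixed $m$, and with $m_1$ fixed $\overline t_m$ moves across $\spt r$ as $m_2$ varies, so in general about $\la^{2\gamma}$ values of $m_2$ are compatible, not $O(A^{O(1)})$.
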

\begin{proof}

By \eqref{e090}, \eqref{e215}, \eqref{e216}, and the H\"older inequality, we have 
    \begin{align}
        &\int_{
				 |s|\lesssim \lambda^{-\gamma}
		}\sum_{m\in J}\int_{(\xi,\eta)\in S_m'}\big|\widehat{D_{B_Rs}g}_{1,m_1}(\xi)\big|\big|\widehat{D_{M(\overline{t})s}g}_{2,m_2}(\eta)\big|d\eta d\xi ds\nonumber\\
        \lesssim &A^{O(1)}R^{3}\lambda^{4\gamma+2\delta_4}\Big(\sum_{m_1\in J_1}\int_{
				 |s|\lesssim \lambda^{-\gamma}
		}\int_{|B_R\xi|\lesssim\la^{\lambda+\kappa}}\big|\widehat{D_{B_Rs}g}_{1,m_1}(\xi)\big|^2d\xi ds\Big)^{\frac12}\label{e219}\\
        &\Big(\sum_{m_2\in J_2}\int_{
				 |s|\lesssim \lambda^{-\gamma}
		}\int_{|\eta|\lesssim\la^{\lambda+\kappa}}\big|\widehat{D_{M(\overline{t})s}g}_{2,m_2}(\eta)\big|^2d\eta ds\Big)^{\frac12}\nonumber
    \end{align}
    By Plancherel's identity and the size of the support of $g_{j,m_j}$, we have
    \begin{align}
        &\sum_{m_1\in J_1}\int_{
				 |s|\lesssim \lambda^{-\gamma}
		}\int_{|B_R\xi|\lesssim\la^{\lambda+\kappa}}\big|\widehat{D_{B_Rs}g}_{1,m_1}(\xi)\big|^2 d\xi ds\nonumber\\
        \le&\sum_{m_1\in J_1}\int_{
				 |s|\lesssim \lambda^{-\gamma}
		}\|D_{B_Rs}g_{1,m_1}\|_2^2 ds\nonumber\\
        \lesssim&\la^{-2\gamma}.\label{e220}
    \end{align}
     Using that 
     $$
\big|M(\overline{t})\big|=\left|\begin{bmatrix}
    2\overline{t_1}&2\overline{t_2}\\
    3\overline{t_1}^2&3\overline{t_2}^2
\end{bmatrix}\right|=|6\overline{t_1}\ \overline{t_2}(\overline{t_1}-\overline{t_2})|\gtrsim A^{-3},
$$
by change of variables
$$
M(\bar t)s\to s,
$$
\eqref{e223} follows from the estimates \eqref{e219} and \eqref{e220}.
Similarly, we obtain 
     \begin{equation}\label{e221}
         \sum_{m_2\in J_2}\int_{
				 |s|\lesssim \lambda^{-\gamma}
		}\int_{|\eta|\lesssim\la^{\lambda+\kappa}}\big|\widehat{D_{M(\overline{t})s}g}_{2,m_2}(\eta)\big|^2d\eta ds\lesssim\la^{-2\gamma}.
     \end{equation}

By change of variables
$$
B_Rs\to s
$$
we obtain \eqref{e222} from \eqref{e219} and \eqref{e221}.

\end{proof}

We are now ready to estimate \eqref{e78}.
Applying Lemma~\ref{t67}, we obtain from  the definition of $g_{2,m_2}$ and \eqref{e47} with $\gamma'=\gamma+\kappa$ that
\begin{align}
      \eqref{e78}  \lesssim &A^{O(1)}\lambda^{\delta_4-\frac{\delta_2} 4}.\label{e224}
\end{align}
In conclusion, by \eqref{e088}, Proposition~\ref{t50}, Proposition~\ref{t35}, Proposition~\ref{t66} and \eqref{e224}, we  have
\begin{equation}\label{e225}
    \|T(f_{1},\,f_{2,\flat})\|_{L^1([0,1]^2)}\lesssim_K A^{O(1)}(\la^{-\delta+\delta_2}+\la^{\frac{-\kappa+5\delta_4}4+\delta_2}+\lambda^{\delta_4-\frac{\delta_2} 4}+\la^{-K}).
\end{equation}

To estimate $\|T(f_{1,\flat},\,f_{2,\sharp})\|_{L^1\zozo}$, we can repeat the above process with 
\begin{align*}
    \tilde g_{1,m}=\frac{1}{\la^{\delta_1}}f_{1,m,\flat},\\
    \tilde g_{2,m}=\frac{1}{\la^{\delta_2}}f_{2,m,\sharp}.
\end{align*}
It is straightforward  to check that $\|\tilde g_{j,m}\|_\infty\lesssim1$ and 
$$
\la^{-\delta_1-\delta_2}\|T(f_{1,\flat},\, f_{2,\sharp})\|_{L^1(\zozo)}\le \sum_{m\in J}\|T(\tilde g_{1,m_1},\, \tilde g_{2,m_2})\|_{L^1}.
$$

Repeat the above argument handling $\|T(f_{1},\,f_{2,\flat})\|_{L^1([0,1]^2)}$,
we obtain
\begin{equation}\label{e226}
    \|T(f_{1,\flat},\,f_{2,\sharp})\|_{L^1([0,1]^2)}\lesssim_K A^{O(1)}\la^{\delta_2}(\la^{-\delta+\delta_1}+\la^{\frac{-\kappa+5\delta_4}4+\delta_1}+\lambda^{\delta_4-\frac{\delta_1} 4}+\la^{-K}).
\end{equation}
where the only difference is that  we should use \eqref{e222} in Proposition~\ref{t67} to get
\begin{align*}
    &A^{O(1)}R^{\frac34}\lambda^{\frac\gamma2+\delta_4+\delta_1+\delta_2}\Big(\sum_{m_1\in J_1}\int_{
				 \bbr^2
		}\int_{|B_R\xi|\lesssim\la^{\lambda+\kappa}}\big|\widehat{D_{s}\tilde g}_{1,m_1}(\xi)\big|^2d\xi ds\Big)^{\frac14}\\
        \lesssim &A^{O(1)}\lambda^{\delta_4-\frac{\delta_1}4+\delta_2}.
\end{align*}

By \eqref{e225} and \eqref{e226}, we  find
\begin{equation}\label{e229}
    \|T_\sharp\|_{L^1([0,1]^2)}\lesssim \la^{-c}
\end{equation}\label{e230}
where $c>0$, if we take the appropriate $\delta_i$ and $\kappa$.

\section{Estimate for $\|T_\sharp\|_{L^1}$}\label{s5'}

In this section, we aim to prove
\begin{equation}\label{e078}
    \|T_\sharp\|_{L^1([0,1]^2)}\lesssim\lambda^{-c}
\end{equation}
for some absolute constant $c>0$. This combined with  \eqref{e230} and \eqref{e31}  completes the proof of Theorem~\ref{t25}.

We observe first that
\begin{align}
    &\|T_\sharp\|_{L^1(\zozo)}\notag\\
    =&\int_{\zozo}\Big|\int_{\bbr^2}\sum_{m\in J}\sum_{n_1=1}^{N_{1,m_1}}\sum_{n_2=1}^{N_{2,m_2}}h_{1,m_1,n_1}(x+B_R(t))h_{2,m_2,n_2}(x+P(t))\notag\\
    &e^{i\alpha_{1,m_1,n_1}\cdot(x+B_R(t))}
    e^{i\alpha_{2,m_2,n_2}\cdot(x+P(t))}\widetilde{\eta_{m_1}}(x+B_R(t))\widetilde{\eta_{m_2}}(x+P(t))r(t)dt\Big|dx\notag\\
    \le&\sum_{n_1=1}^{N_1}\sum_{n_2=1}^{N_2}\sum_{m\in J}\int_{\zozo}\Big|\int_{\bbr^2}e^{i\alpha_{1,m_1,n_1}\cdot(x+B_R(t))}e^{i\alpha_{2,m_2,n_2}\cdot(x+P(t))}H_{m,n}(x,\,t)dt\Big|dx\label{e077}
\end{align}
where 
\begin{align*}
    &H_{m,n}(x,\,t)\\
    =&h_{1,m_1,n_1}(x+B_R(t))h_{2,m_2,n_2}(x+P(t))\widetilde{\eta_{m_1}}(x+B_R(t))\widetilde{\eta_{m_2}}(x+P(t))r(t),
\end{align*}
$N_j\lesssim\la^{\delta_j}$, $j=1,2$ and $| B_R(\alpha_{1,m_1,n_1})|\lesssim\la,\,|\alpha_{2,m_2,n_2}|\sim\la$. We remark that $\spt H_{m,n}=A_m$. As $N_1N_2\lesssim\la^{\delta_1+\delta_2}$, it suffices to consider a fixed pair $(n_1,\,n_2)$, so we will ignore the indices $n_1$ and $n_2$ below. The problem is now reduced to showing
$$
\sum_{m\in J}\int_{\zozo}\Big|\int_{\bbr^2}e^{i\alpha_{1,m_1}\cdot(x+B_R(t))}e^{i\alpha_{2,m_2}\cdot(x+P(t))}H_{m}(x,\,t)dt\Big|dx
\lesssim \la^{-c}.
$$

 We write $\alpha_{1,m_1}=(\alpha_{1,m_1,1},\alpha_{1,m_1,2})\in \bbr^2$. 
We can take 
$$
(\overline{x_m},\,\overline{t_m})\in [(R^{-1}\lambda^{-\gamma}\bbz) \times (R^{-2}\lambda^{-\gamma}\bbz)\times(\lambda^{-\gamma}\bbz)\times (\lambda^{-\gamma}\bbz) ]
$$ 
such that
\begin{align*}
    (\overline {x_m},\overline{t_m})\in &\spt H_{m}+O(R^{-1}\lambda^{-\gamma}) \times O(R^{-2}\lambda^{-\gamma})\times O(\lambda^{-\gamma})\times O(\lambda^{-\gamma})\\
    =&A_m+O(R^{-1}\lambda^{-\gamma}) \times O(R^{-2}\lambda^{-\gamma})\times O(\lambda^{-\gamma})\times O(\lambda^{-\gamma}).
\end{align*}
 In particular, the set $\{\overline{t_m}\}$ is $\lambda^{-\gamma}$-separated, which implies 
 \begin{equation}\label{e1000}
     \#\overline{t_{m,i}}=O(\la^{2\gamma}).
 \end{equation}
 For any $(x,\,t)\in\spt H_{m}$, we have 
 $$
 |(R(x_1-\overline x_{m,1}),R^2(x_2-\overline x_{m,2}))|\lesssim A\la^{-\gamma},\quad |t-{\overline{t_{m}}}|\lesssim A\la^{-\gamma}.
 $$
 As a result, for $j=1,2$,
\begin{align*}
    &\alpha'_{1,m_{1},j}+2\alpha_{2,m_{2},1}t_j+3\alpha_{2,m_{2},2}t_j^2\\
    =&\alpha'_{1,m_{1},j}+2\alpha_{2,m_{2},1}{\overline{t_{m,j}}}+3\alpha_{2,m_{2},2}{\overline{t_{m,j}}}^2+O(A\la^{1-\gamma}),
\end{align*}
where we use the notation $\overline{t_m}=(\overline{t_{m,1}},\,\overline{t_{m,2}})$ and  $\alpha'_{1,m_1}=B_R\alpha_{1,m_1}$.

Let $n=B_Rm_1-m_2\in\bbr^2$. 
By \eqref{e82} and the definition of ${\overline{t_m}}$, 
\begin{equation}\label{e36}
    |n-n'|\lesssim\lambda^\gamma|{\overline{t_m}}-{\overline{t_{m'}}}|+O(1).
\end{equation}
We remark that the $\overline{t_m}$ here is a little different from $\overline{t_m}$ in last section.

For any $\beta\in\bbn^2$, we have 
$$
\|\partial^\beta\widetilde{\eta_{m_1}}\|_\infty\lesssim R^{\beta_1+2\beta_2}\la^{\gamma' |\beta|},\quad \|\partial^\beta\widetilde{\eta_{m_2}}\|_\infty\lesssim\la^{\gamma' |\beta|}
$$ 
and 
$$
\|\partial^\beta h_{1,m_1}\|_\infty\lesssim R^{\beta_1+2\beta_2}\la^{\gamma' |\beta|},\quad \|\partial^\beta h_{2,m_2}\|_\infty\lesssim\la^{\gamma' |\beta|}
$$
by Lemma~\ref{t18}.
Therefore, for any $\beta\in\bbn^2$, we get $|\partial^\beta_t H_{m}|\lesssim A^{O_\beta(1)}\la^{\gamma'|\beta|}$ since $|t|\lesssim1$ and $R\ge 1$.

Take $\sigma>0$ small enough such that
$$\sigma+\gamma'<1.$$
We define
\begin{equation}\label{eset}
    I=\{ m\in J :\Big|\alpha'_{1,m_{1},j}+2\alpha_{2,m_{2},1}{\overline{t_{m,j}}}+3\alpha_{2,m_{2},2}{\overline{t_{m,j}}}^2\Big|\le \la^{\gamma'+\sigma},\,j=1,\,2\}.
\end{equation}
 By the size of $\alpha_{1,m_1}$, we can get $|\alpha'_{1,m_1}|\lesssim\la$. For  $m\notin I$, 
 taking $\ep'<(2\gamma-1)/10$, we have 
 $\Big|\alpha'_{1,m_{1},j}+2\alpha_{2,m_{2},1}{{t_{j}}}+3\alpha_{2,m_{2},2}{{t_{j}}}^2\Big|\gtrsim \la^{\gamma'+\sigma}$ when $(x,t)\in \spt H_m$, which implies, by 
  integration by parts, that
$$
|\int_{\bbr^2}e^{i\alpha_{1,m_1}\cdot(x+B_Rt)}e^{i\alpha_{2,m_2}\cdot(x+P(t))}H_{m}(x,\,t)dt|\lesssim_K A^{O(1)}\la^{-K},
$$
which implies further 
\begin{equation}\label{e43}
    \sum_{m\in J/I}\int_{\zozo}|\int_{\bbr^2}e^{i\alpha_{1,m_1}\cdot(x+B_Rt)}e^{i\alpha_{2,m_2}\cdot(x+P(t))}H_{m}(x,\,t)dt|dx\lesssim_K A^{O(1)}\la^{-K}.
\end{equation}
It remains to estimate the contribution when $m\in I$, which actually is the main term.   Our key ingredient is a sublevel set estimate.

\begin{thm}\label{t100}
    For
    $$
    I=\{ m\in J :|\alpha'_{1,m_{1},j}+2\alpha_{2,m_{2},1}{\overline{t_{m,j}}}+3\alpha_{2,m_{2},2}{\overline{t_{m,j}}}^2|\le \la^{\gamma'+\sigma},\,j=1,\,2\}.
    $$
    defined in \eqref{eset},
    there exists $\tilde \de>0$ and $C_{\tilde\delta}>0$ such that 
    \begin{equation}\label{e089}
         \#I\le C_{\tilde\delta} R^3\la^{4\gamma-\tilde \delta}
        \end{equation}
       for all $R\ge1$ and $\lambda\ge 1$.
\end{thm}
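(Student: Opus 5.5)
Since $\#J\sim R^3\la^{4\gamma}$, the task is to gain a power of $\la$ over the trivial count. Fix $m_2\in J_2$ and write $a=\alpha_{2,m_2}$, which has $|a|\sim\la$. For fixed $m_2$ the index $m_1$ is determined up to $O(1)$ choices by $n=B_Rm_1-m_2$. By \eqref{e081} and \eqref{e36}, $n$ is essentially $\la^\gamma\Psi(\overline{t_m})$ with $\Psi$ the local diffeomorphism from \eqref{e084}. So counting $m_1$ amounts to counting the $\la^{-\gamma}$-separated points $\overline{t_m}$ (there are $O(\la^{2\gamma})$ of them by \eqref{e1000}), with the factor $R^3$ absorbed by the number of lattice points $m_1$ per unit cell.

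The structural constraint comes from $\alpha_{1,m_1}$. By Lemma~\ref{t18}(ii), $\alpha_{1,m_1,n_1}$ lies within $O(R\la^{\gamma'})\times O(R^2\la^{\gamma'})$ of $\spt \wh{f_{1,m_1}}$, but this is not yet a constraint. The real information is that $\alpha'_{1,m_1}=B_R\alpha_{1,m_1}$ depends only on $m_1$, and that $m_1$ sits inside a spatial cell of size $\la^{-\gamma}$. The defining condition of $I$ then says that for both $j=1,2$,
$$
\phi_a(\overline{t_{m,j}}):=2a_1\overline{t_{m,j}}+3a_2\overline{t_{m,j}}^2=-\alpha'_{1,m_1,j}+O(\la^{\gamma'+\sigma}).
$$
Now fix $m_1$ and let $m_2$ vary. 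Each $m_2$ carries its own frequency $a=a(m_2)$, but all of these pairs must match the single vector $\alpha'_{1,m_1}$. I therefore plan to count pairs $(m_1,m_2)$ through a sublevel set in the $t$ variables. Fix $m_2$, hence $a$. The map $t\mapsto(\phi_a(t_1),\phi_a(t_2))$ has Jacobian $\mathrm{diag}(2a_1+6a_2t_1,\,2a_1+6a_2t_2)$.

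\textbf{Case (a): nondegenerate points.} If both diagonal entries are $\gtrsim\la^{1-\epsilon}$, then for each admissible value of $\alpha'_{1,m_1}$ the set of admissible $\overline{t_m}$ is a box of side $\la^{\gamma'+\sigma-1+\epsilon}\ll\la^{-\gamma}$ when $\gamma+\gamma'+\sigma<1$ fails. The parameters here are delicate: $\gamma>1/2$ and $\gamma'>\gamma$, so $\gamma+\gamma'>1$. The gain must therefore come from incidence rather than pointwise smallness. I will use that, as $m_1$ ranges over $J_1$, the vector $\alpha'_{1,m_1}$ is roughly constant on $m_1$-blocks. This holds only if the $\alpha$'s are chosen consistently from the frequency support of $f_1$ localized to neighbouring cells, which I would try to arrange or avoid.

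\textbf{Cleaner route.} I will not lean on this. Instead I use that $n$ (equivalently $m_1$) and $\overline{t_m}$ are linked by $\Psi$, so $\alpha'_{1,m_1}$ becomes a function of $(m_2,\overline{t})$. The count of $I$ then becomes
$$
\#\{(m_2,\overline t):\ |\alpha'_{1}(m_2,\overline t)_j+\phi_{a(m_2)}(\overline t_j)|\le\la^{\gamma'+\sigma},\ j=1,2\}.
$$
For fixed $(m_2,\overline t_2)$, the condition on $\overline t_1$ restricts it to a sublevel set of a quadratic in one variable. By the standard estimate $|\{s:|q(s)|\le\ep\}|\lesssim(\ep/|q''|)^{1/2}$, the admissible $\overline t_1$ lie in $O(1)$ intervals of length $(\la^{\gamma'+\sigma}/|a_2|)^{1/2}$. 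If instead $|a_2|\ll\la$, then $|a_1|\sim\la$ and one uses the first-derivative version. This is exactly the computation behind \eqref{e208}–\eqref{e209}. It yields at most $\la^{\gamma}\cdot\la^{(\gamma'+\sigma-1)/2}$ values of $\overline t_1$ out of $\la^\gamma$, a gain of $\la^{-(1-\gamma'-\sigma)/2}$, which is positive because $\sigma+\gamma'<1$. Here $\alpha'_1$ must be treated as frozen while $\overline t_1$ varies. This is the delicate point: $\alpha'_{1}$ depends on $m_1$, which moves with $\overline t$ through $\Psi$.

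\textbf{Handling the dependence.} I would split $J_1$ into blocks of $\la^{\gamma''}$ consecutive indices, with $\gamma''<\gamma$ slightly. I would then choose the frequencies $\alpha_{1,m_1}$ from the block-level decomposition of $f_1$, so that $\alpha'_1$ is constant across a block. This requires redoing Lemma~\ref{t18} at a coarser scale; otherwise, Lemma~\ref{t18}(ii) combined with the Fourier support $Q_{R,\la}$ only gives $\alpha'_1$ Lipschitz at scale $\la^{\gamma'}$. Within each block the one-variable sublevel estimate above applies. Summing over blocks and over $m_2$ gives $\#I\lesssim R^3\la^{4\gamma-\tilde\delta}$ with $\tilde\delta\approx\min\{(1-\gamma'-\sigma)/2,\ \gamma-\gamma''\}$, up to $\la^{O(\epsilon')}$ losses from $A\le\la^{\epsilon'}$.

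The main obstacle is this decoupling of $\alpha'_{1,m_1}$ from the variation of $\overline{t_m}$. The degenerate region, where $2a_1+6a_2t_j\approx 0$ for both $j$, is harmless: since $t_1\ne t_2$ by at least $A^{-1}$, both entries cannot vanish simultaneously. So the one-variable sublevel bound can always be applied in at least one coordinate.
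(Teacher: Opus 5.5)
There is a genuine gap, and it is the ``delicate point'' you flag yourself. The vectors $\alpha'_{1,m_1}$ in the definition of $I$ come from applying Lemma~\ref{t18} separately to each cell piece $f_{1,m_1}$. All you know about them is $|\alpha'_{1,m_1}|\lesssim\la$. There is no relation between the frequencies chosen on neighbouring cells, so your claim that $\alpha'_1$ is ``Lipschitz at scale $\la^{\gamma'}$'' is unfounded.

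Because of this, no argument that fixes $m_2$ and counts $m_1$ can succeed. Fix one $m_2$, and put $a=\alpha_{2,m_2}$ and $\phi_a(u)=2a_1u+3a_2u^2$ as you do. Let $f_1$ carry, on each cell $m_1$, a single wave packet whose frequency satisfies $B_R\alpha_{1,m_1}=-(\phi_a(\overline{t_{m,1}}),\phi_a(\overline{t_{m,2}}))$ with $m=(m_1,m_2)$. These vectors have size $\lesssim\la$, so this is compatible with \eqref{e004}. The frequencies Lemma~\ref{t18} attaches to that cell lie within $O(\la^{\gamma'})$ of them after applying $B_R$. Then the whole row $\{(m_1,m_2)\}$ lies in $I$. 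So the per-row bound $\lesssim R^3\la^{2\gamma-\tilde\de}$ that your ``cleaner route'' would deliver is false in general.

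Your remedy is to re-run Lemma~\ref{t18} on coarser blocks so that $\alpha'_1$ is block-constant. This changes the decomposition and hence the set $I$, so it no longer proves the statement as given. It also changes $f_{1,\flat}$, so the bound for $T(f_{1,\flat},f_{2,\sharp})$ in Section~\ref{s4'} would have to be redone, since it rests on the cell-scale flatness \eqref{e46}.

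The missing idea is the observation you make and then abandon: fix $m_1$ and compare two rows.

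The paper applies Cauchy--Schwarz in $m_1$ to the indicator of $I$ and discards the degenerate pairs $D^0_{m_1}$. It then finds a pair $(m_2,m_2')$ with $|m_2-m_2'|\gtrsim\la^{\gamma-\tilde\de}$ that is shared by $\gtrsim R^3\la^{2\gamma-2\tilde\de}$ values of $m_1$. For $m=(m_1,m_2)$ and $m'=(m_1,m_2')$ both in $I$, subtracting the two defining inequalities cancels the uncontrolled $\alpha'_{1,m_1}$. What remains is \eqref{e39}--\eqref{e40}, in which only the now frozen $\alpha_{2,m_2},\alpha_{2,m_2'}$ appear.

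Only at this point is your one-variable quadratic sublevel bound legitimate. After pigeonholing $\overline{t_{m,2}}=t_0$, it pins $\overline{t_{m',2}}$ to $\lesssim\la^{\gamma+(\gamma'+\sigma-1)/2}$ values. This produces a set $E''$ of $m_1$'s with $\#E''\gtrsim R^3\la^{(1-\gamma'-\sigma)/2-2\tilde\de}$.

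The contradiction comes from the spatial side. Since $m_1$ is common, \eqref{e081} gives $\Psi(\overline{t_m})-\Psi(\overline{t_{m'}})=\la^{-\gamma}(m_2'-m_2)+O(\la^{-\gamma})$, with a fixed right-hand side. The map $(s,s')\mapsto\Psi(s,t_0)-\Psi(s',t_0')$ has Jacobian $|s-s'|\,|{-3R^{-1}(s+s')}+6ss'|\gtrsim\la^{-8\tilde\de}$ off the degenerate set. This forces $\#E''\lesssim R^3\la^{8\tilde\de}$, contradicting the lower bound.

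So the gain comes from an over-determined system for the pair $(\overline{t_m},\overline{t_{m'}})$, not from a per-row count.
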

Once we establish Theorem \ref{t100}, we have
\begin{equation}\label{e44}
    \sum_{m\in I}\int_{\zozo}|\int_{\bbr^2}e^{i\alpha_{1,m_1}\cdot(x+B_Rt)}e^{i\alpha_{2,m_2}\cdot(x+P(t))}H_{m}(x,\,t)dt|dx\lesssim\la^{-\tilde\delta}
\end{equation}
since $\spt H_m$ has measure $\lesssim A^4 R^{-3}\lambda^{-4\gamma}$.
Recalling \eqref{e077}, \eqref{e43} and \eqref{e44}, we obtain
$$
   \|T_\sharp\|_{L^1([0,1]^2)}\lesssim \lambda^{\de_1+\de_2-\tilde\de},
$$
which implies \eqref{e078}  by taking $\delta_1$, $\delta_2$ small enough.

\begin{proof}[Proof of Theorem~\ref{t100}]
    
As $\#I\le \#J\le CR^3\lambda^{4\gamma}$, \eqref{e089} holds when $\lambda\le \lambda_0$ for any finite $\lambda_0$. Taking $\lambda_0=\lambda_{\tilde\delta}$ large enough, we  assume below $\lambda\ge \lambda_{\tilde\delta}$. We will prove \eqref{e089} for this case by contradiction.

Suppose 
\begin{equation}\label{e085}
    \#I\ge R^3\la^{4\gamma-\tilde \delta},
\end{equation}
where $\tilde \delta>0$ is to be determined small constant. 
Using $\#m_1\le R^3\la^{2 \gamma}$, we apply the Cauchy-Schwarz inequality to obtain
$$
    R^3\la^{4\gamma-\tilde\delta}\le\sum_{m=(m_1,m_2)\in\mathbb Z^4}\chi_I(m)
    \le R^{3/2}\la^{\gamma}(\sum_{m_1}(\sum_{m_2}\chi_I(m_1,\,m_2))^2)^{\frac{1}{2}}.
$$
Therefore
\begin{align*}
    R^3\la^{6\gamma-2\tilde\delta}\le&\sum_{m_1}(\sum_{m_2}\chi_I(m_1,\,m_2))^2\\
     =&\sum_{m_1}(\sum_{m_2}\chi_I(m_1,\,m_2))(\sum_{m'_2}\chi_I(m_1,\,m'_2)).
\end{align*}
Let 
$$
D_{m_1}=\{(m_2,\,m_2')\in[0,\lambda^\gamma]^4:\,(m_1,\,m_2)\in I,\,(m_1,\,m'_2)\in I\}
$$
and
$$
D=\{m_1:\,|D_{m_1}|\ge\frac{\la^{4\gamma-2\tilde\delta}}{10}\}.
$$
Then $|D|\ge R^3\la^{2\gamma-2\tilde\delta}/2$. 

Fixing $m_1\in D$, we define
\begin{align*}
    &D^1_{m_1}=\{(m_2,\,m_2')\in[0,\lambda^\gamma]^4:\,|-3(\overline{t_{m',1}}+\overline{t_{m,1}})/R+6\overline{t_{m',1}}\overline{t_{m,1}}|\le\la^{-6\tilde\delta}\}\\
    &D^2_{m_1}=\{(m_2,\,m_2')\in[0,\lambda^\gamma]^4:\,|2/R^2+6\overline{t_{m',2}}\overline{t_{m,2}}|\le\la^{-6\tilde\delta}\}.
\end{align*}
Let $ D^0_{m_1}=D^1_{m_1}\bigcup D^2_{m_1}$, and  $D'_{m_1}=D_{m_1}\setminus D^0_{m_1}$.

We claim that
\begin{equation}\label{e086}
    |D'_{m_1}|\ge c_2\la^{4\gamma-2\tilde\delta},
\end{equation}
where $c_2>0$ is a small constant.
To prove this, we estimate $D^1_{m_1}$ first. 
Fixing $\overline {t_{m,1}}$
satisfying $|-3/R+6\overline{t_{m,1}}|\ge\la^{-3\tilde\delta}$, there are $O(\la^{\gamma-3\tilde\delta})$ possible $\overline{t_{m',1}}$ satisfying
$$
|-3(\overline{t_{m',1}}+\overline{t_{m,1}})/R+6\overline{t_{m',1}}\overline{t_{m,1}}|\le\la^{-6\tilde\delta}.
$$
Therefore
$$
\#\{(\overline{t_m},\overline{t_{m'}}):\ (m_2,m_2')\in D_{m_1}^1,\ |-3/R+6\overline{t_{m,1}}|\ge\la^{-3\tilde\delta}\}
\lesssim \la^{4\gamma-3\tilde\delta}
$$
by \eqref{e1000}.
It is easy to check that
$$
\#\{(\overline{t_m},\overline{t_{m'}}):\ (m_2,m_2')\in D_{m_1}^1,\ |-3/R+6\overline{t_{m,1}}|\le\la^{-3\tilde\delta}\}
\lesssim \la^{4\gamma-3\tilde\delta}
$$
since there are $O(\la^{\gamma-3\tilde\delta})$ possible $\overline{t_{m,1}}$ satisfying $|-3/R+6\overline{t_{m,1}}|\le\la^{-3\tilde\delta}$.
By \eqref{e10}, when $m_1$ and $\overline{t_m}$ are fixed,  there are $O(1)$ possible $m_2$. Using this observation and last two estimates, we obtain $|D^1_{m_1}|\lesssim\la^{4\gamma-3\tilde\delta}$. Similarly, $|D^2_{m_1}|\lesssim\la^{4\gamma-3\tilde\delta}$. Then \eqref{e086} follows when $\lambda$ is large enough.

Let
$$
E=\{ (m_2,\,m_2')\in[0,\lambda^\gamma]^4:\,|\{m_1:\,(m_2,\,m_2')\in D_{m_1}'\}|\ge \frac{c_2R^3\la^{2\gamma-2\tilde\delta}}4\}.
$$
We observe that
\begin{align*}
    \frac{c_2R^3\la^{6\gamma-2\tilde\delta}}2\le|E|R^3\la^{2\gamma}+|E^c|\frac{c_2R^3\la^{2\gamma-2\tilde\delta}}{4}\le|E|R^3\la^{2\gamma}+\frac{c_2R^3\la^{6\gamma-2\tilde\delta}}{4},
\end{align*}
where $E^c:=[0,\lambda^\gamma]^4\setminus E$,
thus $|E|\gtrsim{\la^{4\gamma-2\tilde\delta}}$. In particular, there exists $(m_2,\,m_2')\in E$ satisfying 
\begin{equation}\label{e079}
|m_2-m_2'|\gtrsim\la^{\gamma-\tilde\delta}.     
\end{equation}

Fix  $(m_2,\,m_2')\in E$ satisfying \eqref{e079}. Let 
$$
\widetilde{E}_{m_2,m_2'}=\{m_1\in\bbz^2:\,(m_2,\,m_2')\in D_{m_1}\}.
$$ 
For simplicity, we abbreviate $\tilde E_{m_2,m_2'}$ by $\tilde E$. By definition, we have 
$|\widetilde{E}|\gtrsim{R^3\la^{2\gamma-2\tilde\delta}}.$ Denote $m=(m_1,\,m_2)$ and $m'=(m_1,\,m_2')$. By the definition of $D'_{m_1}$ we have $m,\,m'\in I$,
$$
|-3(\overline{t_{m',1}}+\overline{t_{m,1}})/R+6\overline{t_{m',1}}\overline{t_{m,1}}|\ge\la^{-6\tilde\delta}\textit{ and }|2/R^2+6\overline{t_{m',2}}\overline{t_{m,2}}|\ge\la^{-6\tilde\delta}.
$$
Since $|m_2-m_2'|\ge\frac{\la^{\gamma-\tilde\delta}}{4}$, \eqref{e36} implies that $|\overline{t_m}-\overline{t_{m'}}|\gtrsim\la^{-2\tilde\delta}.$ So we have  either 
$$
|\overline t_{m,1}-\overline t_{m',1}|\gtrsim\la^{-2\tilde\delta}
$$
or 
$$
|\overline t_{m,2}-\overline t_{m',2}|\gtrsim\la^{-2\tilde\delta}.
$$
Moreover, we have either 
\begin{equation}\label{e080}
    \#\{m_1\in\widetilde{E}:\,|\overline t_{m,1}-\overline t_{m',1}|\gtrsim\la^{-2\tilde\delta}\}\gtrsim R^3\la^{2\gamma-2\tilde\delta}
\end{equation}
or 
\begin{equation}\label{e93}
    \#\{m_1\in\widetilde{E}:\,|\overline t_{m,2}-\overline t_{m',2}|\gtrsim\la^{-2\tilde\delta}\}\gtrsim R^3\la^{2\gamma-2\tilde\delta}.
\end{equation}
Without loss of generality, we assume that \eqref{e080} holds, i.e. the set
$$
E':=\{m_1\in\widetilde{E}:\,|\overline t_{m,1}-\overline t_{m',1}|\gtrsim\la^{-2\tilde\delta}\}
$$
satisfies $\#E'\gtrsim R^3\la^{2\gamma-2\tilde\delta}$. For any $m_1\in E'$, we obtain from \eqref{e081},  the definition of $I$,  and the fact that $m,m'\in I$, the following inequalities:
\begin{align}
    &\Psi_1(\overline{t_m})-\Psi_1(\overline{t_{m'}})=\la^{-\gamma}(-m_{2,1}+m'_{2,1})+O(\la^{-\gamma})\label{e37};\\
    &\Psi_2(\overline{t_m})-\Psi_2(\overline{t_{m'}})=\la^{-\gamma}(-m_{2,2}+m'_{2,2})+O(\la^{-\gamma})\label{e38};\\
    &|2\alpha_{2,m_{2},1}\overline{t_{m,1}}+3\alpha_{2,m_{2},2}\overline{t_{m,1}}^2-2\alpha_{2,m'_{2},1}\overline{t_{m',1}}-3\alpha_{2,m'_{2},2}\overline{t_{m',1}}^2|\lesssim\la^{\sigma+\gamma'}\label{e39};\\
    &|2\alpha_{2,m_{2},1}\overline{t_{m,2}}+3\alpha_{2,m_{2},2}\overline{t_{m,2}}^2-2\alpha_{2,m'_{2},1}\overline{t_{m',2}}-3\alpha_{2,m'_{2},2}\overline{t_{m',2}}^2|\lesssim\la^{\sigma+\gamma'}\label{e40}.
\end{align}
We recall that $\Psi(t)=(\Psi_1(t),\Psi_2(t))=B_Rt-P(t)$.

Next, we will construct a subset $E''$ of $E'$, whose cardinality is both large and small, which will finally lead to a contradiction.

The relation \eqref{e40} roughly says that the number of $(\overline{t_{m,2}}, \overline{t_{m',2}})$ is small, so we can use pigeonholing to construct a subset $E''\sset E'$ with large cardinality.
From \eqref{e1000} we can get $\#\{\overline{t_{m,2}}\}\le \la^{\gamma}$, so there exists $t_0\in\bbr$ satisfying that
\begin{equation}\label{e082}
\#\{m_1\in E':\,\overline{t_{m,2}}=t_0\}\gtrsim R^3\la^{\gamma-2\tilde\delta}.
\end{equation}
We claim that
\begin{align}
    &\#\{\overline{t_{m',2}}:\, |2\alpha_{2,m_{2},1}t_0+3\alpha_{2,m_{2},2}t_0^2-2\alpha_{2,m'_{2},1}\overline{t_{m',2}}-3\alpha_{2,m'_{2},2}\overline{t_{m',2}}^2|\notag\\
    \lesssim&\la^{\sigma+\gamma'}\}\lesssim\la^{\gamma+\frac{\sigma+\gamma'-1}{2}}.\label{e083}
\end{align}
It follows from \eqref{e082} and \eqref{e083} that
there exists $t_0'$ such that the set
$$
E'':=\{m_1\in E':\,\overline{t_{m,2}}=t_0,\,\overline{t_{m',2}}=t'_0\}
$$
satisfying
\begin{equation}\label{e41}
    \#E''\gtrsim R^3\la^{\frac{1-\gamma'-\sigma-4\tilde\delta}{2}}.
\end{equation}
We verify \eqref{e083} below.
If $|\alpha_{2,m'_{2},2}|\ge\frac{\la}{4}$, then, by \eqref{e40} and that $\{\overline{t_m}\}$ is $\lambda^{-\gamma}$-separated
\begin{align*}
     &\#\{\overline{t_{m',2}}:\, |2\alpha_{2,m_{2},1}t_0+3\alpha_{2,m_{2},2}t_0^2-2\alpha_{2,m'_{2},1}\overline{t_{m',2}}-3\alpha_{2,m'_{2},2}\overline{t_{m',2}}^2|\lesssim\la^{\sigma+\gamma'}\}\\
     \lesssim&\la^{\gamma+\frac{\sigma+\gamma'-1}{2}}.
\end{align*}
If $|\alpha_{2,m'_{2},2}|\le\frac{\la}{4}$, because $|\alpha_{2,m'_{2}}|\ge\frac{\la}{2}$ due to the Fourier support of $f_2$, we have $|\alpha_{2,m'_{2},1}|\ge\frac{\la}{3}$. Thus, we have
\begin{align*}
    &\#\{\overline{t_{m',2}}:\, |2\alpha_{2,m_{2},1}t_0+3\alpha_{2,m_{2},2}t_0^2-2\alpha_{2,m'_{2},1}\overline{t_{m',2}}-3\alpha_{2,m'_{2},2}\overline{t_{m',2}}^2|\lesssim\la^{\sigma+\gamma'}\}\\
    \lesssim&\la^{\gamma+\sigma+\gamma'-1}.
\end{align*}
\eqref{e083} is now proved.

On the other hand, \eqref{e37} and \eqref{e38} indicate that $\#E''$ is small. To explain this, let us fix $\overline{t_{m,2}}=t_0,\,\overline{t_{m',2}}=t'_0$, and define
$$
\Phi(s,s'):= \Psi(s,t_0)-\Psi(s',t_0'),
$$
where $\Psi$ is defined by \eqref{e084}. Then 
\begin{align*}
    |\det J(\Phi)|=&\left|\begin{bmatrix}
    R^{-1}-2s &-R^{-1}+2s'\\
    -3s^2 &3s'^2
        \end{bmatrix}\right|\\
        =&|s-s'||-3R^{-1}(s+s')+6ss'|\\
        \gtrsim&\la^{-8\tilde\delta},
\end{align*}
when  $(s,s')\in S:=\{(s,s')\in[0,1]^2:\ |-3R^{-1}(s+s')+6ss'|\ge \lambda^{-6\tilde\de},\ |s-s'|\ge \lambda^{-2\tilde\de}\}$. This combined with Lemma~\ref{t5} implies that 
$\{(s,s')\in S:\ \Phi(s,s')=a+O(\lambda^{-\gamma})\}$ is contained in finitely many squares of length $\sim \lambda^{-\gamma+8\tilde\delta}$.
As $m_2$ and $m_2'$ are fixed, we obtain from this observation, \eqref{e37}, and  \eqref{e38} that $(\overline{t_{m,1}},\,\overline{t_{m',1}})$ lies in several squares with length $O(\la^{-\gamma+8\tilde\delta})$. 
In particular,  $\overline{t_{m,1}}$ lies in several intervals of length $O(\la^{-\gamma+8\tilde\delta})$, which combined with \eqref{e36} yields 
\begin{equation}\label{e42}
     \#E''
     \lesssim R^3\la^{8\tilde\delta}.
\end{equation}
as $m=(m_1, m_2)$ and $m_2$ is fixed.

Combining \eqref{e41} and \eqref{e42}, we obtain $20\tilde\delta\ge1-\gamma'-\sigma$, which leads to a contradiction if we take $0<\tilde\delta\le\frac{1-\gamma'-\sigma}{24}$.

In conclusion, the assumption \eqref{e085} fails, and we have $\# I\le\la^{4\gamma-\tilde\delta}$. This completes the proof of Theorem~\ref{t100}.
\end{proof}

\begin{rmk}
    We discuss parameters in this note.
    From \eqref{e15}, \eqref{e225}, and \eqref{e226}, we can get 
   \begin{align*}
       c=\min\{&\delta-\delta_2,\,\frac{\kappa-5\delta_4}4-\delta_2,\,\delta_4-\frac{\delta_2} 4,\\
       &\delta-\delta_2-\delta_1,\,\frac{\kappa-5\delta_4}4-\delta_1-\delta_2,\,\frac{\delta_1} 4-\delta_4-\delta_2\}
   \end{align*}
    in \eqref{e229}. So if we take $\delta_1\le\frac{\min\{\delta,\kappa\}}{100}$, $\delta_2\le\frac{\delta_1}{100}$ and $\delta_4\le\frac{\delta_2}{100}$, we can find $c>0$ such that 
    $$
    \|T_{\flat}\|_{L^1(\zozo)}\lesssim_l\la^{-c}.
    $$
    During this section we only need $\delta_1+\delta_2<\tilde\delta\le\frac{1-\gamma'-\sigma}{24}$ to ensure \eqref{e078}.
    For example, we can take $\kappa=\de=\gamma-\frac{1}{2}=\frac{1}{100}$, $\tilde\delta=10^{-3}$, $\delta_1=10^{-4}$, $\delta_2=10^{-6}$, $\ep'=\delta_3=\delta_4=\sigma=10^{-8}$.
\end{rmk}

\appendix

\section{ Some Roth Theorems in $\bbr^2$}

\begin{thm}\label{t1}
	Let $\ep\in(0,\ \frac{1}{2})$. Then there exists a constant $\de(\ep)>0$ such that the following holds. If $E\subseteq [0,\, 1]^{2}$  is a measurable set of Lebesgue measure greater than $\ep$, then there exist
    $$
    (x,y),\, (x+t,\,y+s^2),\, (x+s,\, y+t^2)\in E
    $$ with $t,\, s>\delta(\ep)$.
\end{thm}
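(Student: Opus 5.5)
The plan is to reduce Theorem~\ref{t1} to a product of two one-dimensional polynomial Roth configurations, in line with the Appendix's remark that many such two-dimensional patterns collapse to one-dimensional cases. Write the three points as $(x,y)$, $(x,y)+(t,s^2)$ and $(x,y)+(s,t^2)$. In the first coordinate the shifts are $t$ and $s$. In the second coordinate they are $s^2$ and $t^2$. This pairing makes $(t,t^2)$ and $(s,s^2)$ the natural objects. The key step is a change of variables that pairs each parameter with its own square inside a single shift.

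Concretely, substitute $u=(x,y)$ and look at the differences. The second point minus the first is $(t,s^2)$, and the third minus the first is $(s,t^2)$. We count solutions with the multilinear form
\[
\Lambda(f)=\int\!\!\int f(x,y)\,f(x+t,y+s^2)\,f(x+s,y+t^2)\,\omega(t)\omega(s)\,dx\,dy\,dt\,ds
\]
for $f=\chi_E$ and a bump $\omega$ supported away from $0$.

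The first step is to prove a Sobolev-improving (smoothing) estimate in the spirit of Theorem~\ref{t24}. Suppose the third function has Fourier support at scale $\la$. Then, with both first and second functions held in $L^2$, $\Lambda$ is bounded by $\la^{-\gamma}$. The phase is $\xi\cdot(t,s^2)+\eta\cdot(s,t^2)$. Its Hessian in $(t,s)$ is $\mathrm{diag}(2\eta_2,2\xi_2)$, and the gradient is $(\xi_1+2t\eta_2,\,2s\xi_2+\eta_1)$. Separating variables, the $t$-integral $\int e^{2\pi i(\xi_1 t+\eta_2 t^2)}\omega(t)dt$ and the $s$-integral $\int e^{2\pi i(\eta_1 s+\xi_2 s^2)}\omega(s)ds$ factor. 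Each of them is a one-dimensional quadratic oscillatory integral, of exactly the type treated by Bourgain's and the Durcik--Guo--Roos $\sigma$-uniformity arguments.

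I would therefore run the argument of Section~\ref{s3}. There are three pieces:
\begin{enumerate}[(i)]
\item the nonstationary-phase reduction, as in the proof of Theorem~\ref{t24};
\item the decomposition of Lemma~\ref{t18} into $f_\flat+f_\sharp$;
\item the $TT^*$ and $D_s$ step of Section~\ref{s4'}.
\end{enumerate}
In step (iii) the Jacobian of $(t,s)\mapsto(t,s^2,s,t^2)$ restricted to the relevant directions is $4ts\gtrsim\de^2$. This plays the role of \eqref{e91}, and it is uniformly nondegenerate on $\spt\omega\otimes\omega$, so Lemma~\ref{t30} carries over with $A\sim\de^{-2}$.

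The structured piece $f_\sharp$ is handled by the counting argument of Theorem~\ref{t100}. Since the phase separates, the sublevel set count factors into two one-dimensional counts. Each of these is controlled by the elementary fact that a quadratic $a+2bt+\dots$ is small only on $O(1)$ intervals. This makes the analogue of Theorem~\ref{t100} considerably easier than the original.

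Once the smoothing estimate is in hand, the density argument from the proof of Theorem~\ref{t3} applies verbatim. We use Lemma~\ref{l1} with the averages $Q_{(2^{-k},2^{-k})}$, split $\Lambda=I_1+I_2+I_3$, bound $I_3$ by the smoothing estimate, and pigeonhole the scales to make $I_2$ small. The lower bound $I_1\gtrsim\ep^3$ uses $f\ge0$ together with Lemma~\ref{l1}. Here we need $t,s\le2^{-k'}$ so that each shifted copy sits inside a box averaged by $Q$. Restricting the parameters to $t,s\in[\de,2\de]$ with $\de=2^{-k'}$ then produces the required $t,s>\de(\ep)$.

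I expect the main obstacle to be the smoothing estimate when the third function's frequency $\eta$ is large but the component $\eta_2$ paired with $t^2$ is small. In that regime the $t$-integral loses curvature, and decay must come instead from $\xi_2$ through the $s$-integral. I would first split into the cases $|\eta_2|\gtrsim\la$ and $|\eta_1|\gtrsim\la$. In the second case I would use the constraint $\xi_1+\eta_1\approx0$ forced by the $x$-integration. This should transfer the large frequency to $\xi$, at which point the role of the two functions is exchanged.
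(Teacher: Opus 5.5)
You have a genuine gap. Your whole argument rests on a new two-parameter Sobolev-improving estimate for $\Lambda$, and you do not prove it. You assert that Sections~3--5 transfer, but the facts you cite to make them transfer are incorrect.

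\textbf{The nondegeneracy claim is false.} The analogue of \eqref{e91}, which is what Lemma~\ref{t30} uses, is the Jacobian of $F(x,y,t,s)=(x+t,\,y+s^2,\,x+s,\,y+t^2)$. This equals $2(s-t)$, not $4ts$. It vanishes on the diagonal $t=s$, which lies inside $\spt\omega\otimes\omega$ and is exactly where the two non-base points coincide. Near the diagonal, the preimage of a $\la^{-\gamma}$-cube is a tube of length $O(1)$ in $t$. So Lemma~\ref{t30} fails there, and you would at least have to excise $|t-s|\le\la^{-\epsilon}$ as in the proof of Theorem~\ref{t27}. Your $4ts$ is the product of the determinants of the two linearization matrices in the $TT^*$ step, i.e.\ the analogue of $M(\overline t)$ in Lemma~\ref{t67}, which plays a different role.

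\textbf{The factorization does not reduce anything.} Writing $m(\xi,\eta)=m_1(\xi_1,\eta_2)\,m_2(\eta_1,\xi_2)$ does not turn $\Lambda$ into one-dimensional forms, because each factor couples frequencies of different functions in different coordinates. For the same reason, the count for $f_\sharp$ does not split into two one-dimensional counts. The point $\overline{t_m}$ is determined jointly by $m_2-m_1$ through the map $(t,s)\mapsto(s-t,t^2-s^2)$, so an argument of the type in Theorem~\ref{t100} would still be needed.

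\textbf{The regime $|\eta_2|\ll|\eta_1|\sim\la$ is not handled.} In $\Lambda$ the function at $(x,y)$ absorbs the frequency $-\xi-\eta$. So the $x$-integration imposes no relation $\xi_1+\eta_1\approx0$, and exchanging roles would require yet another smoothing estimate.

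\textbf{The missing idea is that no new smoothing estimate is needed.} Freeze $s$ and set $f_1=f(\cdot,\cdot+s^2)$, $f_2=f(\cdot+s,\cdot)$. Then $f(x+t,y+s^2)=f_1(x+t,y)$ and $f(x+s,y+t^2)=f_2(x,y+t^2)$, which is exactly the one-parameter configuration of Proposition~\ref{t2}.
\begin{enumerate}[(i)]
\item Lemma~\ref{l2} gives the main term $\int f(x,y)(f*_1v_k)(x,y+s^2)(f*_2v_k)(x+s,y)\,dx\,dy$, with errors that are translation invariant in $s$.
\item After integrating against $v_{k'}(s)$, this is again the same configuration, with $f*_2v_k$ in the linear slot and $f*_1v_k$ in the quadratic slot. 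So Lemma~\ref{l2} applies a second time.
\item What remains is $\int f\,(f*_1v_k*_2v_k)(f*_2v_k*_1v_k)\gtrsim\ep^3$ by Lemma~\ref{l1}, and the $L^2$ errors are removed by pigeonholing the scales.
\end{enumerate}
Each application only needs decay in the $y$-frequency of the quadratic slot, which is exactly what Proposition~\ref{t2} supplies. So neither the diagonal degeneracy nor the $\eta_1$-large regime you flagged ever arises.
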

Let $v\ge0$ be an even smooth function which is supported in $[-2,\,2]$, constant on $[-1,\,1]$, monotone on $[1,\,2]$ and normalized such that $\|v\|_1=1$. For $k\in \bbz^+$, define $v_k(x)=2^{k}v(2^kx)$. The following is a reformulation of \cite[Theorem 5]{Christ2020c}.

\begin{prop}[{\cite[Theorem 5]{Christ2020c}}]\label{t2}
		
        There exists an absolute constant $\gamma>0 $, independent of $m\in\bbn$, such that for every Schwartz function $g$ with 
        $$
        \spt(\wh{g})\sset\bbr\times \big([2^{m-1},2^m]\cup[-2^m,-2^{m-1}]\big),
        $$
        we have
	\begin{equation}
		\Big\|\int_{\bbr}f(x+t,\, y)g(x,\, y+t^2)v_{l}(t)dt\Big\|_{L^{1}({\zozo})}\le C_{l}2^{-\gamma m}\|f\|_{2}\|g\|_{2}\,,
	\end{equation}
	where $C_{l}\le2^{\gamma_{0}l}$ for some $\gamma_{0}>0$ .
\end{prop}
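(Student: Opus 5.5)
Since the statement is a reformulation of \cite[Theorem 5]{Christ2020c}, the shortest route is to match notation. In \cite{Christ2020c} the bilinear form is $\int f(x+t,y)g(x,y+t^2)\,\zeta(t)\,dt$ with a smooth cutoff $\zeta$ supported away from $0$. Our $v_l$ is supported in $[-2^{1-l},2^{1-l}]$, which contains $t=0$, so I first split $v_l=v_l\chi_{\{|t|\le 2^{-l}2^{-\epsilon m}\}}+v_l\chi_{\{|t|>2^{-l}2^{-\epsilon m}\}}$.

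\emph{The piece near $t=0$.} For the first part, Cauchy--Schwarz in $(x,y)$ for each fixed $t$, followed by integration in $t$, gives the bound $\|f\|_2\|g\|_2\int_{|t|\le 2^{-l-\epsilon m}}v_l\lesssim 2^{-\epsilon m}\|f\|_2\|g\|_2$. This is exactly the trick used in the proof of Theorem~\ref{t27}.

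\emph{The piece away from $t=0$.} I split $t>0$ and $t<0$ by symmetry and smooth the cutoff at scale $2^{-l-\epsilon m}$. The resulting weight $\zeta$ is supported in $|t|\sim[2^{-l-\epsilon m},2^{1-l}]$ with $\|\partial^k\zeta\|_\infty\lesssim 2^{(l+\epsilon m)(k+1)}$. Rescaling $t\mapsto 2^{-l}t$, $x\mapsto 2^{-l}x$, $y\mapsto 2^{-2l}y$ moves the frequency of $g$ to $|\eta_2|\sim 2^{m-2l}$. It also produces a weight whose support and derivative bounds are controlled by $A=2^{O(l+\epsilon m)}$. Under these changes $L^1(\zozo)$ and $L^2$ norms change only by factors $2^{O(l)}$, which are absorbed into $C_l$. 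Then \cite[Theorem 5]{Christ2020c}, with its polynomial dependence on the cutoff parameters, gives a bound $2^{O(l)}2^{O(\epsilon m)}2^{-\gamma_1(m-2l)}$. Taking $\epsilon\ll\gamma_1$ yields the claim with $\gamma=\min\{\epsilon,\gamma_1/2\}$ and $\gamma_0=O(1)$. When $m\le 4l$ the trivial bound already suffices, since then $2^{-\gamma m}\ge 2^{-4\gamma l}$.

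\emph{Self-contained alternative.} To avoid quoting \cite{Christ2020c} as a black box, I would run the one-dimensional version of Sections~\ref{s3}--\ref{s5'}, in four steps.
\begin{itemize}
\item \emph{Restrict $f$ in $\xi_1$.} Non-stationary phase in $t$ for the multiplier $\int e^{2\pi i(t\xi_1+t^2\eta_2)}\zeta(t)\,dt$ lets me assume $|\xi_1|\lesssim 2^m$.
\item \emph{Reduce to $L^\infty$.} I interpolate the trivial $L^2\times L^2$ bound with an $L^\infty\times L^\infty$ decay estimate. The latter is obtained by localizing to squares of side $2^{-m\gamma'}$, $\gamma'\in(\frac12,1)$, and decomposing $f=f_\flat+f_\sharp$ in the $x$-variable and $g=g_\flat+g_\sharp$ in the $y$-variable via Lemma~\ref{t18}.
\item \emph{Flat parts.} A local $TT^*$ in $t$ linearizes $(t+s)^2-t^2\approx 2\bar t s$. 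The flat parts are then controlled by Lemma~\ref{t18}(iii), exactly as in Section~\ref{s4'}.
\item \emph{Sharp parts.} These are sums of modulated bumps $h(x,y)e^{i(\alpha x+\beta y)}$ with $|\beta|\sim 2^m$. Integration by parts kills every cube except those with $|\alpha+2\bar t\beta|\lesssim 2^{m(\gamma'+\sigma)}$.
\end{itemize}

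\emph{Expected main obstacle.} The hardest step is the final count: showing that the surviving cube indices number at most $2^{m(2\gamma'-\tilde\delta)}$ out of $2^{2m\gamma'}$. I would first attempt this by the Cauchy--Schwarz/pigeonholing argument of Theorem~\ref{t100}. The condition $|\alpha+2\bar t\beta|\lesssim 2^{m(\gamma'+\sigma)}$ pins $\bar t$ to an interval of length $2^{m(\gamma'+\sigma-1)}$. Meanwhile, the relation between the $f$-cube and the $g$-cube forces $\bar t$ to vary linearly with the cube index, and comparing these two constraints gives the contradiction. This is simpler than Theorem~\ref{t100}, since only one time variable is present.
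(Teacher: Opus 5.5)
Your main route is correct and matches the paper, which gives no argument beyond calling the statement a reformulation of \cite[Theorem 5]{Christ2020c}. Your truncation of $v_l$ near $t=0$ (costing $2^{-\epsilon m}$, as in the proof of Theorem~\ref{t27}) and the parabolic rescaling are exactly the bookkeeping such a reformulation needs, whatever cutoff normalization that theorem uses, and the $2^{O(l)}$ losses are correctly absorbed into $C_l$. The self-contained alternative is unnecessary; if you pursue it, note that Lemma~\ref{t30} and the box shape of $S_m$ rely on the $4\times 4$ Jacobian of $(x,t)\mapsto(x+B_Rt,\,x+P(t))$, which has no counterpart for the map $(x,y,t)\mapsto((x+t,y),(x,y+t^2))$ from $\bbr^3$ to $\bbr^4$, so Section~\ref{s4'} must be redone rather than copied.
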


For $f,\, g\in L^2(\bbr^2)$, we define
\begin{align*}
    &f\ast_1g(x,\,y)=\int_\bbr f(x-t,y)g(t,y)dt,\\
     &f\ast_2g(x,\,y)=\int_\bbr f(x,y-t)g(x,t)dt.
\end{align*}
 In Section 5.1 in \cite{Christ2020c}, it was shown that the Proposition~\ref{t2} implies the following estimate.

\begin{lm}\label{l2}
    Let $6<k+5<k'<k''$ be natural numbers, and let $f_0,\,f_1,\,f_2$ be non-negative $1$-bounded functions supported in $\zozo$. Define
    $$
    I=\int f_0(x,\,y)f_1(x+t,\,y)f_2(x,\,y+t^2)v_{k'}(t)dxdydt.
    $$
    Then there exists $\sigma,\,c>0$ such that
    \begin{align*}
        |I|
        \ge&\int_{\zozo}f_0(f_1\ast_1v_k)(f_2\ast_2v_k)\\
        &-c(2^{2\sigma k'-\sigma k''}+2^{k'-k''}+2^{k-k'}\\
        &+\|f_2\ast_2v_{k''}-f_2\ast_2v_k\|_2+\|f_1\ast_1v_{k''}-f_1\ast_1v_k\|_2).
    \end{align*}

\end{lm}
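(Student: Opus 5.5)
The plan is to split each of $f_1$ and $f_2$ at the two scales $2^{-k}$ and $2^{-k''}$, handle the $f_2$-pieces first and the $f_1$-pieces second, and show that every resulting term except the model term $\int f_0(f_1\ast_1 v_k)(f_2\ast_2 v_k)$ is one of the listed errors. Two tools do all the work. The first is the trivial bound: since all functions are $1$-bounded and supported in $\zozo$, and $\|v_{k'}\|_1=1$, Cauchy--Schwarz gives $|\int f_0(x,y)F(x+t,y)G(x,y+t^2)v_{k'}(t)|\lesssim \min(\|F\|_2\|G\|_\infty,\|F\|_\infty\|G\|_2)$, up to harmless cutoffs. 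The second is Proposition~\ref{t2} with $l=k'$.

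\emph{Step 1 (splitting $f_2$).} Write
\[
f_2=f_2\ast_2 v_k+(f_2\ast_2 v_{k''}-f_2\ast_2 v_k)+(f_2-f_2\ast_2 v_{k''}).
\]
The middle piece contributes at most $\|f_2\ast_2v_{k''}-f_2\ast_2v_k\|_2$ by the trivial bound.

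For the last piece, decompose in the second frequency variable into Littlewood--Paley pieces $g_m$ with $|\eta_2|\sim 2^m$. On the $m$-th piece the multiplier $1-\hat v(2^{-k''}\eta_2)$ has size $\lesssim\min(1,2^{m-k''})$. Proposition~\ref{t2} gives the bound $2^{\gamma_0k'-\gamma m}$ for the $m$-th piece, and the trivial bound gives $2^{m-k''}$. Summing $\min(2^{\gamma_0k'-\gamma m},2^{m-k''})$ over $m$, and balancing at $m\approx k''$, yields a bound $\lesssim 2^{2\sigma k'-\sigma k''}$ for a suitable small $\sigma$ depending on $\gamma,\gamma_0$. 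This balancing of the decay rate against the loss $2^{\gamma_0k'}$ is the main obstacle: it is where $\sigma$ is fixed and where one must check that $\gamma_0\le 2\gamma$ can be arranged, if necessary by enlarging $\gamma_0$ or shrinking $\gamma$.

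For the remaining smooth piece $f_2\ast_2v_k$, note that $|\partial_y(f_2\ast_2 v_k)|\lesssim 2^k$ and $t^2\lesssim 2^{-2k'}$ on the support of $v_{k'}$. Hence
\[
f_2\ast_2v_k(x,y+t^2)=f_2\ast_2v_k(x,y)+O(2^{k-2k'}),
\]
and the error is absorbed into $2^{k-k'}$.

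\emph{Step 2 (splitting $f_1$).} We are now left with
\[
\int f_0\,(f_2\ast_2v_k)(x,y)\,(f_1\ast_1\tilde v_{k'})(x,y),
\]
where $\tilde v_{k'}(t)=v_{k'}(-t)=v_{k'}(t)$ because $v$ is even. Split $f_1$ in the same way:
\[
f_1=f_1\ast_1v_k+(f_1\ast_1v_{k''}-f_1\ast_1v_k)+(f_1-f_1\ast_1v_{k''}).
\]
The middle piece costs $\|f_1\ast_1v_{k''}-f_1\ast_1v_k\|_2$.

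For the last piece, the resulting convolution kernel is $v_{k'}-v_{k'}\ast v_{k''}$. Its $L^1$ norm is $\lesssim 2^{k'-k''}$, since $v_{k''}$ is an approximate identity at a much finer scale and $v_{k'}$ is Lipschitz at scale $2^{-k'}$. This gives the error $2^{k'-k''}$.

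For the first piece, $f_1\ast_1v_k\ast_1v_{k'}=f_1\ast_1v_k+O(2^{k-k'})$ uniformly, because $v_k$ is Lipschitz with constant $2^{k}\cdot 2^k$ and $\|\,|t|\,v_{k'}\|_1\lesssim2^{-k'}$. Its contribution is therefore the model term up to an error $2^{k-k'}$.

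Collecting all terms gives the inequality, with $c$ absolute and $\sigma$ as chosen in Step 1; the hypothesis $k+5<k'$ guarantees that the scale separations used above hold.
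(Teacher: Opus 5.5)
\textbf{There is a genuine gap, at exactly the point you flag.} Your overall scheme is the standard Bourgain-type argument of Section~5.1 of \cite{Christ2020c}, to which the paper simply defers, and your elementary steps are all correct:
\begin{itemize}
\item the two middle pieces cost the $L^2$ differences by Cauchy--Schwarz;
\item $|\partial_y(f_2\ast_2 v_k)|\lesssim 2^k$ together with $t^2\lesssim 2^{-2k'}$ gives an error $O(2^{k-2k'})$;
\item the kernel bounds $\|v_{k'}-v_{k'}\ast v_{k''}\|_1\lesssim 2^{k'-k''}$ and $\|v_k\ast v_{k'}-v_k\|_1\lesssim 2^{k-k'}$ give the remaining errors.
\end{itemize}

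Your remedy for the high-frequency piece of $f_2$ goes the wrong way. Enlarging $\gamma_0$ or shrinking $\gamma$ only increases $\gamma_0/\gamma$. Proposition~\ref{t2}, as reformulated in the paper with an unspecified $\gamma_0$, gives no control on that ratio, so the stated form cannot be extracted from it alone.

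Your balancing is also misplaced. The crossover of $2^{\gamma_0k'-\gamma m}$ and $2^{m-k''}$ is at $m_0=(k''+\gamma_0k')/(1+\gamma)$, not at $m\approx k''$. If you split at $k''$, the low-frequency sum $\sum_{m\le k''}\min(1,2^{m-k''})$ is of size $O(1)$, so there is no gain. Done correctly, your argument yields the error $\min\bigl(1,2^{(\gamma_0k'-\gamma k'')/(1+\gamma)}\bigr)$. If $\gamma_0>2\gamma$, this is not $\lesssim 2^{2\sigma k'-\sigma k''}$ for any $\sigma>0$. To see this, take $k''=\mu k'$ with $2<\mu<\gamma_0/\gamma$ and let $k'\to\infty$: the left side stays of size $1$ while the right side tends to $0$.

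\textbf{The missing idea: the $l$-dependence is forced by parabolic scaling, not chosen.} The change of variables $(x,y,t)\mapsto(2^{-l}x,2^{-2l}y,2^{-l}t)$ does the following:
\begin{itemize}
\item it preserves the configuration $(x+t,y),(x,y+t^2)$;
\item it turns $v_l(t)\,dt$ into $v(s)\,ds$;
\item it carries the frequency band $|\eta_2|\sim 2^m$ to $|\eta_2|\sim 2^{m-2l}$;
\item it leaves invariant the ratio of the $L^1$ norm of the bilinear expression to $\|f\|_2\|g\|_2$, since both acquire the factor $2^{-3l}$.
\end{itemize}
After rescaling, $[0,1]^2$ becomes a union of $2^{3l}$ unit squares. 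Apply the $l=0$ case on each square and sum by Cauchy--Schwarz. Localizing $g$ preserves its $\eta_2$-support up to Schwartz tails, since the relevant projection acts in $y$ alone. This gives the bound $2^{-\gamma(m-2l)}$, that is, effectively $\gamma_0=2\gamma$.

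With this input, your sum becomes $\lesssim 2^{\frac{\gamma}{1+\gamma}(2k'-k'')}$. That is the claimed error $2^{2\sigma k'-\sigma k''}$ with $\sigma=\gamma/(1+\gamma)$. When $k''\le 2k'$ the trivial bound $\|f_2-f_2\ast_2 v_{k''}\|_2\le 2$ already suffices. This scaling is exactly where the factor $2$ in $2^{2\sigma k'}$ comes from.
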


We  explain how Theorem \ref{t1} follows from Theorem \ref{t2}. The reduction is similar to that in \cite{Christ2020c}.

\begin{proof}[Proof of Theorem~\ref{t1}]

To prove Theorem \ref{t1}, it suffices to prove that for any measurable function $f$ on $\zozo$ satisfying $0\le f\le 1$ and $\int_{\bbr^2} f(x)dxdy\ge\ep$, we have 
$$
\int_{\bbr^2}\int_{\zozo}f(x)f(x+t,y+s^2)f(x+s,y+t^2)dxdydtds\ge\delta(\ep).
$$

Recalling that
    $$
Q_{t}f(x,y)=\f{1}{4t^{2}}\int_{|(u\com v)-(x\com y)|\le t}f(u\com v)dudv,
$$ 
    we have that
    $$
    (f\ast_1v_k\ast_2v_k)(x,\,y)\gtrsim Q_{2^k}f(x,\,y)\ge0.
    $$
    From Lemma~\ref{l1}, we can get that for $f(x)\ge0$ and $1\le k_1\le k_2$,
    \begin{align}
        &\int_{\zozo}f(x,\,y)(f\ast_1v_{k_1}\ast_2v_{k_1})(x,\,y)(f\ast_1v_{k_2}\ast_2v_{k_2})(x,\,y)dxdy\nonumber\\
        \gtrsim&(\int_{\zozo}f(x,\,y)dxdy)^3\nonumber\\
        \gtrsim&\ep^3.\label{e707}
    \end{align}

Fix integers $6<k+5<k'<k''$ to be determined later. Because $\|v_k\|_\infty\lesssim2^{k}$, we have
\begin{align}
    &2^{2k'}\int_{\bbr^2}\int_{\zozo}f(x,\,y)f(x+t,y+s^2)f(x+s,y+t^2)dxdydtds\nonumber\\ 
    \gtrsim&\int_{\bbr^2}\int_{\zozo}f(x,\,y)f(x+t,y+s^2)f(x+s,y+t^2)v_{k'}(t)v_{k'}(s)dxdydtds.\label{e67}
\end{align}
Fix $s$. By Lemma~\ref{l2}, we obtain 
\begin{align}
	&\int_{\bbr}\int_{\zozo}f(x,\,y)f(x+t,y+s^2)f(x+s,y+t^2)v_{k'}(t)dxdydt\nonumber\\ \gtrsim&\int_{\zozo}f(x,\,y)(f\ast_1v_k)(x,\,y+s^2)(f\ast_2v_k)(x+s,y)dxdy \nonumber\\
    &-c(2^{2\sigma k'-\sigma k''}+2^{k'-k''}+2^{k-k'}\nonumber\\
    &+\|f\ast_2v_{k''}-f\ast_2v_k\|_2+\|f\ast_1v_{k''}-f\ast_1v_k\|_2).\label{e3}
\end{align}

Integrating \eqref{e3} with $v_{k'}(s)$ and applying Lemma~\ref{l2} again, we  get 
\begin{align}
	2^{2k'}&\int_{\bbr^2}\int_{\zozo}f(x,\,y)f(x+t,y+s^2)f(x+s,y+t^2)dxdydtds\nonumber\\
    \gtrsim&\int_{\zozo}f(x,\,y)(f\ast_1v_k\
    \ast_2v_k)(x,\,y)(f\ast_2v_k\ast_1v_k)(x,y)dxdy \nonumber\\
    &-c(2^{2\sigma k'-\sigma k''}+2^{k'-k''}+2^{k-k'}\nonumber\\
    &+\|f\ast_2v_{k''}-f\ast_2v_k\|_2+\|f\ast_1v_{k''}-f\ast_1v_k\|_2)\nonumber\\
    &-c(2^{2\sigma k'-\sigma k''}+2^{k'-k''}+2^{k-k'}\nonumber\\
    &+\|f\ast_1v_k\ast_2v_{k''}-f\ast_1v_k\ast_2v_k\|_2+\|f\ast_2v_k\ast_1v_{k''}-f\ast_2v_k\ast_1v_k\|_2).\label{e102}
\end{align}
In deriving \eqref{e102}, we also use the fact that $\|v_{k'}\|=1$ and \eqref{e67}. 
By Young's inequality and the fact that $\|v_k\|_1\sim1$, we have 
\begin{align*}
    \|f\ast_1v_k\ast_2v_{k''}-f\ast_1v_k\ast_2v_k\|_2&\lesssim\|f\ast_2v_{k''}-f\ast_2v_k\|_2,\\
    \|f\ast_2v_k\ast_1v_{k''}-f\ast_2v_k\ast_1v_k\|_2&\lesssim\|f\ast_1v_{k''}-f\ast_1v_k\|_2.
\end{align*}
Substituting these into \eqref{e102}, we get 
\begin{align}
    &2^{k'}\int_{\bbr^2}\int_{\zozo}f(x)f(x+t,y+s^2)f(x+s,y+t^2)dxdydtds\nonumber\\\gtrsim&\int_{\zozo}f(x,\,y)(f\ast_1v_k\
    \ast_2v_k)(x,\,y)(f\ast_2v_k\ast_1v_k)(x,y)dxdy \nonumber\\
    &-2c(2^{2\sigma k'-\sigma k''}+2^{k'-k''}+2^{k-k'}\nonumber\\
    &+\|f\ast_2v_{k''}-f\ast_2v_k\|_2+\|f\ast_1v_{k''}-f\ast_1v_k\|_2).\label{e4}
\end{align}
By \eqref{e707} and the discussion in Section 5.1 of \cite{Christ2020c}, we conclude that 
$$
\int_{\bbr^2}\int_{\zozo}f(x)f(x+t,y+s^2)f(x+s,y+t^2)dxdydtds\ge\delta(\ep),
$$ 
which completes the proof of Theorem~\ref{t1}.

\end{proof}
\begin{rmk}
    By a similar argument, we can get an analogue of Theorem~\ref{t1} with
    $$
    (x,y),\, (x+t,\,y+s^2),\, (x+s,\, y+t^2)
    $$
 replaced by
    $$
    (x,y),\, (x+t,\,y+s),\, (x+s^2,\, y+t^2)
    $$
    or 
    $$
    (x,y),\, (x+t,\,y),\, (x+s,\, y+t^2).
    $$
\end{rmk}

\end{document}